\documentclass[12pt,twoside]{article}
\usepackage[paperheight=30cm, paperwidth=22cm,
textwidth=17cm,
headheight=1mm,
]{geometry}
\geometry{bindingoffset=.75cm}

\usepackage[utf8]{inputenc}
\usepackage{amsmath}
\usepackage{titlesec}
\titleformat*{\section}{\normalfont\fontfamily{phv}\fontsize{15}{17}\bfseries}
\titleformat*{\subsection}{\normalfont\fontfamily{phv}\fontsize{13}{17}\bfseries}
\titlespacing*{\subsection}{0pt}{0\baselineskip}{0\baselineskip}
\usepackage{amsthm}
\usepackage{chngcntr}
\counterwithin{figure}{section}
\usepackage{amsfonts}
\usepackage{bbm}
\usepackage{tikz-cd}
\usepackage{mathtools}
\usepackage{amssymb}
\usepackage{mathrsfs}
\usepackage[english]{babel}
\usepackage{changepage}
\usepackage{graphicx}
\graphicspath{ {./images/} }
\usepackage[onehalfspacing]{setspace}
\usepackage{listings}
\usepackage{fancyhdr}
\usepackage{layout}

\pagestyle{fancy}
\fancyhead{}
\fancyhf[HCO]{D. M. Leonard}
\fancyhf[HCE]{Topology Sequence Entropy of co-Induced Systems}
\fancyhf[HLE,HRO]{\thepage}
\cfoot{\thepage}
\let\tmp\oddsidemargin
\let\oddsidemargin\evensidemargin
\let\evensidemargin\tmp
\reversemarginpar

\linespread{1.5}

\newtheorem{theorem}{Theorem}[section]

\newtheorem{remark}{Remark}[section]

\newtheorem{lemma}[theorem]{Lemma}
\newtheorem{proposition}[theorem]{Proposition}

\usepackage{hyperref}
\hypersetup{
	colorlinks=true,
	linkcolor = blue,
	filecolor = blue,
	citecolor = blue,
	urlcolor = blue
}

\title{Topological Sequence Entropy of co-Induced Systems}
\author{Dakota M. Leonard}
\date{} 
\pagebreak
\begin{document}
	\flushbottom
	\raggedbottom
	\maketitle
	\begin{abstract}\normalfont\small\singlespacing
		\noindent Let $G$ be a discrete, countably infinite group and $H$ a subgroup of $G$. If $H$ acts continuously on a compact metric space $X$, then we can induce a continuous action of $G$ on $\prod_{H\backslash G}X,$ where $H\backslash G$ is the collection of right-cosets of $H$ in $G$. This process is known as the co-induction. In this article, we will calculate the maximal pattern entropy of the co-induction. If $[G:H] < +\infty$ we will show that the $H$ action is null if and only if the co-induced action of $G$ is null. Additionally, we will discuss an example where $H$ is a proper subgroup of $G$ with finite index, and we will show that the maximal pattern entropy of the $H$-action on $X$ is equal to the maximal pattern entropy of the co-induced action of $G$ on $\prod_{H \backslash G} X$. If $[G:H] = +\infty,$ we will show that the maximal pattern entropy of the co-induction is always $+\infty$ given the $H$-system is not trivial.  
	\end{abstract}
	\section{Introduction}
	By a \textbf{topological dynamical system} (t.d.s.) we mean a tuple $(X, G,\alpha)$ where $X$ is a compact metric space, $G$ is a group, and $\alpha$ is an action $(g,x)\mapsto \alpha_{g}(x)$ of $G$ on $X$ by homeomorphisms. Unless stated otherwise, throughout this document all groups will be assumed to be infinite, countable, discrete groups with identity $e$. In 1965, Adler et al. introduced an isomorphism invariant for t.d.s. known as topological entropy to distinguish $\mathbb{Z}$-systems. Later, in 1985, Ollagnier proposed an extension of this concept to actions of amenable groups. Ornstein and Weiss further developed this extension in 1987. More recently, in 2011, Kerr and Li extended the concept of topological entropy to sofic groups (see \cite{adler_konheim_mcandrew_topological_entropy},\cite{Kerr_Li_sofic_entropy},\cite{Kerr_Li_sofic_amenable_entropy},\cite{Ollagnier_book}, \cite{Ornstein_Weiss_amenable_entropy},\cite{Walters_book}). If the topological entropy of a t.d.s. is 0, we call the system \textbf{deterministic}. One way to distinguish between deterministic systems was introduced by Goodman in \cite{Goodman_sequence_entropy} for $\mathbb{Z}$-systems known as topological sequence entropy. The definition proposed by Goodman can be extended to any group $G$ giving us an isomorphism invariant that does not depend on amenability or soficity. Huang and Ye in \cite{Huang_Ye_maximal_sequence_entropy} improved upon this definition with a new invariant \textbf{maximal pattern entropy} defined for any $G$-system. In particular, they showed the maximal pattern entropy of a t.d.s. is equal to the supremum of all topological sequence entropy over all sequences of $G.$ Huang and Ye proved another equivalent definition for maximal pattern entropy using local entropy theory and $IN$-tuples. \\
	\indent The study of local entropy theory started in 1993 by Blanchard. Blanchard introduced entropy pairs to characterize uniformly positive entropy (u.p.e.), completely positive entropy (c.p.e.), and relate topological entropy to topological mixing properties. Particularly, he proved that u.p.e. implies weak-mixing for $\mathbb{Z}$-systems (see \cite{Blanchard_fully_positive_topological_entropy},\cite{Blanchard_entropy_pairs_disjointness_theorem}). Huang et al. in \cite{Huang_Shao_Ye_sequence_entropy_pairs} defined sequence entropy pairs and uniformly positive sequence entropy (u.p.s.e.) to describe when a topological dynamical system is weak-mixing. Improving upon the result of Blanchard, they proved weak-mixing is equivalent to u.p.s.e. for $\mathbb{Z}$-systems (see \cite{Huang_Shao_Ye_sequence_entropy_pairs},\cite{Huang_Maass_Ye_sequence_entropy_tuples},\cite{Huang_Shao_Ye_mixing_sequence_entropy} for more information about sequence entropy pairs and tuples). Both proofs of these results can be easily extended to any abelian group. Kerr and Li in \cite{Kerr_Li_Independence} studied combinatorial independence and its relation to entropy, sequence entropy, and tameness. They defined and characterized the properties of $IE$-tuples, $IN$-tuples, and $IT$-tuples (independence entropy, null, and tame tuples, respectively). They showed the existence of a non-diagonal $IN$-pair implies positive topological sequence entropy for some sequence of group elements. Moreover, they showed any non-diagonal $IN$-tuple is the same thing as a sequence entropy tuple. Huang and Ye in \cite{Huang_Ye_maximal_sequence_entropy} proved that maximal pattern entropy can be calculated by determining the existence of $IN$-tuples. Particularly, they showed that maximal pattern entropy is $+\infty$ or $\log(k)$ where $k$ is the longest length of an $IN$-tuple of distinct points that exists in the system.\\
	\indent In this article, we calculate the maximal pattern entropy of $G$-actions induced from actions of subgroups by the co-induction construction using local entropy theory and $IN$-tuples (see Section \ref{co-induction prelim section} for details about the co-induction construction). In doing so, we examine the interplay between the dynamical and algebraic properties of groups. We will often refer to these t.d.s. as \textbf{co-induced systems}. The theory of co-induced systems has been used to study and answer questions in dynamical systems. Stepin in \cite{Stepin_bernoulli_shifts_on_groups} used co-induced systems to study Ornstein groups. Specifically, he proved that groups which contain an Ornstein subgroup are Ornstein groups (see \cite{Lewis_Bowen_weak_isomorphism}). Dooley et al. in \cite{Dooley_Golodets_Rudolph_Sinel_co_induction} used co-induced systems to create non-Bernoulli systems that have measure-theoretic c.p.e. for every amenable group $G$ that contains an element of infinite order. They proved if $H\leq G$ and $G$ amenable then the measure-theoretic entropy is preserved by the co-induction. Dooley and Zhang in \cite{Dooley_Zhang_co_induction} studied more properties of co-induced systems of amenable groups. They showed entropy properties including topological entropy, c.p.e., and u.p.e. are preserved by the co-induction. Furthermore, they investigated topological dynamical properties such as strong mixing, weak-mixing, and topological transitivity for co-induced systems. Hayes in \cite{Ben_hayes_co_induction_sofic_entropy} showed if $G$ is a sofic group and $H\leq G$ then the topological sofic entropy is preserved by the co-induction. It is unknown if measure-theoretic sofic entropy is preserved by the co-induction. \\
	\indent Let $G$ be a group and $H\leq G.$ Our study will be split into two parts. In Section \ref{maximal pattern entropy finite index subsection} the maximal pattern entropy of co-induced systems where $[G:H] < +\infty$ is analyzed. The first result in this section is a proposition characterizing $IN$-tuples of $G$-systems under the condition that $G$ contains a finite index subgroup. 
	\addtocounter{section}{2}
	\begin{proposition}
		Let $(X,G,\alpha)$ be a t.d.s. and $H\leq G$ such that $[G:H] < + \infty,$ then 
		\begin{align*}
			IN_{k}(X,G) = IN_{k}(X,H).
		\end{align*}
	\end{proposition}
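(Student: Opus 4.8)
The plan is to prove both inclusions, extracting the nontrivial one by first settling the case of a finite-index \emph{normal} subgroup and then reducing the general case to it through the normal core. Throughout I use the standard convention that $J\subseteq G$ is an independence set for a tuple $(U_1,\dots,U_k)$ when $\bigcap_{s\in F}s^{-1}U_{\sigma(s)}\neq\emptyset$ for every finite $F\subseteq J$ and every $\sigma\in\{1,\dots,k\}^{F}$, and that a point-tuple lies in $IN_k$ exactly when every product neighbourhood of it has arbitrarily large finite independence sets.

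The inclusion $IN_{k}(X,H)\subseteq IN_{k}(X,G)$ is immediate: a subset $J\subseteq H$ is an independence set for a tuple of neighbourhoods with respect to the $H$-action precisely when it is one with respect to the $G$-action, since the defining condition only refers to the elements of $J$ itself. Hence any tuple admitting arbitrarily large $H$-independence sets admits arbitrarily large $G$-independence sets, and the whole content of the proposition is the reverse inclusion.

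For the reverse inclusion I would first record the covariance I intend to exploit: for a fixed tuple $(U_1,\dots,U_k)$ and any $g\in G$, if $J$ is an independence set then so is $Jg$, because $(sg)^{-1}U=g^{-1}s^{-1}U$ lets one factor $g^{-1}$ out of the intersection as a bijection of $X$, leaving the tuple of sets unchanged. I would then treat the case $M\trianglelefteq K$ with $[K:M]<+\infty$, for an arbitrary t.d.s.\ with acting group $K$. Fix neighbourhoods $U_1,\dots,U_k$ of the coordinates of a $K$-$IN$-tuple, and given a target size $N$ choose a $K$-independence set $J$ with $|J|\ge [K:M]\cdot N$. Writing $K=\bigsqcup_i g_iM$ and applying the pigeonhole principle, some left coset satisfies $|J\cap g_{i_0}M|\ge N$. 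Right-translating by $g_{i_0}^{-1}$ produces $(J\cap g_{i_0}M)g_{i_0}^{-1}$, which remains an independence set for the \emph{same} tuple $(U_j)$ by the covariance above, and which lies in $g_{i_0}Mg_{i_0}^{-1}=M$ by normality. This yields $M$-independence sets of size $\ge N$ for every $N$, so the tuple lies in $IN_k(X,M)$; combined with the easy inclusion this gives $IN_k(X,K)=IN_k(X,M)$. The general finite-index subgroup $H\le G$ is then handled by passing to its normal core $N=\bigcap_{g\in G}gHg^{-1}$, the kernel of the action of $G$ on the finite set $H\backslash G$: it is normal in $G$ with $[G:N]<+\infty$, and it sits inside $H$ with $N\trianglelefteq H$ and $[H:N]<+\infty$. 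Applying the normal case to the pair $(G,N)$ and then to $(H,N)$ gives $IN_k(X,G)=IN_k(X,N)=IN_k(X,H)$.

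The main obstacle is exactly the possible non-normality of $H$: the pigeonhole-plus-translation step only lands an independence set inside a conjugate $g_{i_0}Hg_{i_0}^{-1}$, which is an independence set for the conjugated subgroup rather than for $H$, so one cannot in general return to the original neighbourhoods while remaining inside $H$. Normality is precisely what forces the conjugate to coincide with the subgroup, allowing the right-translation to reach $M$ and preserve the base tuple at the same time; isolating the normal case and bridging the general case through the core is the device that circumvents this difficulty.
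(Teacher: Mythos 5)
Your proof is correct: the easy inclusion, the right-translation covariance, the pigeonhole over cosets in the normal case, and the reduction through the normal core $N=\bigcap_{g\in G}gHg^{-1}$ (normal of finite index in $G$, hence also normal of finite index in $H$, so the normal case applies to both pairs $(G,N)$ and $(H,N)$) all go through. But the detour through the core is unnecessary, and the paper proves the general statement in one step using exactly the covariance you recorded; the difference is your choice of left cosets. The paper decomposes a large $G$-independence set $F$ along \emph{right} cosets of $H$: writing $F_{\theta}=Fs(\theta)^{-1}\cap H$ for $\theta\in H\backslash G$, one has $F=\bigsqcup_{\theta\in\pi(F)}F_{\theta}s(\theta)$, pigeonhole yields $\theta_{\circ}$ with $|F_{\theta_{\circ}}|>n$, and since $F_{\theta_{\circ}}s(\theta_{\circ})\subseteq F$ is an independence set, the identity $\alpha_{(hs(\theta_{\circ}))^{-1}}(U)=\alpha_{s(\theta_{\circ})^{-1}}\alpha_{h^{-1}}(U)$ pulls the common factor $\alpha_{s(\theta_{\circ})^{-1}}$ out of the intersection, showing that $F_{\theta_{\circ}}\subseteq H$ is an independence set for the \emph{same} tuple $(U_1,\dots,U_k)$. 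So what you isolate as the main obstacle --- that translation only lands in a conjugate of $H$ --- is an artifact of pigeonholing over left cosets $g_iM$: with right cosets $Mg_i$, the translate $(J\cap Mg_{i_0})g_{i_0}^{-1}$ lies in $M$ itself, and your own covariance observation $(sg)^{-1}U=g^{-1}s^{-1}U$ shows it is an independence set for the original neighborhoods, with no normality needed. The asymmetry traces back to the convention $\bigcap_{g\in F}\alpha_{g^{-1}}(U_{\omega(g)})$, which is covariant under right translation of $F$, so right cosets are the natural decomposition. Your route buys nothing extra here (the conclusions are identical) and costs an additional reduction plus the index inflation $[G:N]\leq [G:H]!$ --- harmless since only finiteness of the index matters, but avoidable.
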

	\noindent This proposition will play a crucial role in analyzing the structure of $IN$-tuples of co-induced systems. It serves as key fact when proving the subsequent lemma.\pagebreak
	\begin{lemma}
		Let $H$ be a subgroup of $G$ such that $[G:H] < + \infty$. If $(X,H,\alpha)$ is a t.d.s. and $(X_{H}^{G},G,\alpha^{H\backslash G})$ is the co-induced system, then 
		\begin{align*}
			IN_{k}(X_{H}^{G},G)\subseteq \prod_{\theta\in H\backslash G}IN_{k}(X,H).
		\end{align*}
	\end{lemma}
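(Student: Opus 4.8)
The plan is to reduce the statement to the single most accessible coordinate -- the one indexed by the coset $He$ of the identity -- and then use the group action to transport every other coordinate into that position. Recall that $X_H^G$ can be realized as the space of maps $\phi\colon G\to X$ satisfying $\phi(hg)=\alpha_h\phi(g)$ for all $h\in H$, $g\in G$, with $G$ acting by $(g\cdot\phi)(g_0)=\phi(g_0g)$; under a choice of transversal $\{t_\theta\}_{\theta\in H\backslash G}$ this gives the identification with $\prod_{H\backslash G}X$ sending $\phi$ to $(\phi(t_\theta))_\theta$. Write $\pi_\theta(\phi)=\phi(t_\theta)$ for the coordinate projections and take $t_{\theta_0}=e$ for $\theta_0=He$. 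The first step is the observation that $\pi_{\theta_0}$ is an $H$-factor map from the restriction $(X_H^G,H)$ onto $(X,H)$: it is a surjective coordinate projection, and $\pi_{\theta_0}(h\cdot\phi)=\phi(h)=\alpha_h\phi(e)=\alpha_h\pi_{\theta_0}(\phi)$ for every $h\in H$.

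The second ingredient is the identity $\pi_{\theta_0}\circ\alpha^{H\backslash G}_{t_\theta}=\pi_\theta$ for every coset $\theta$, which is immediate since $\pi_{\theta_0}(t_\theta\cdot\phi)=(t_\theta\cdot\phi)(e)=\phi(t_\theta)=\pi_\theta(\phi)$. Intuitively, acting by the representative $t_\theta$ carries the content of the $\theta$-coordinate into the fixed $\theta_0$-coordinate, where the equivariant projection is available. Note that for a general $\theta$ the projection $\pi_\theta$ is \emph{not} itself $H$-equivariant, since right translation by $H$ permutes the coordinates; it is precisely this defect that forces the detour through $\alpha^{H\backslash G}_{t_\theta}$.

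With these in hand I would argue as follows. Fix $\mathbf{x}=(x^{(1)},\dots,x^{(k)})\in IN_{k}(X_H^G,G)$ and a coset $\theta\in H\backslash G$. Since $IN_{k}(X_H^G,G)$ is invariant under the diagonal $G$-action, $\alpha^{H\backslash G}_{t_\theta}\mathbf{x}\in IN_{k}(X_H^G,G)$. Applying the preceding Proposition to the $G$-system $X_H^G$ and its finite-index subgroup $H$ gives $IN_{k}(X_H^G,G)=IN_{k}(X_H^G,H)$, so $\alpha^{H\backslash G}_{t_\theta}\mathbf{x}\in IN_{k}(X_H^G,H)$. Because $IN$-tuples push forward under factor maps, applying the $H$-factor map $\pi_{\theta_0}$ coordinatewise yields $\pi_{\theta_0}^{\times k}(\alpha^{H\backslash G}_{t_\theta}\mathbf{x})\in IN_{k}(X,H)$. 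By the identity above this image is exactly $(\pi_\theta(x^{(1)}),\dots,\pi_\theta(x^{(k)}))=(x^{(1)}_\theta,\dots,x^{(k)}_\theta)$, the $\theta$-projection of $\mathbf{x}$. As $\theta$ was arbitrary, $\mathbf{x}\in\prod_{\theta\in H\backslash G}IN_{k}(X,H)$, which is the desired inclusion.

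The routine verifications -- that the coordinate projection at $\theta_0$ is equivariant, and that $IN_{k}$ is both $G$-invariant and carried forward by factor maps -- are standard, and I would cite the Kerr--Li properties of $IN$-tuples for the latter two. The genuine content, and the step I expect to require the most care, is the interplay between the Proposition and the $G$-action: the Proposition is what upgrades an $H$-independence statement to a $G$-one and back, and only this equivalence lets me exploit the non-equivariant coordinates $\pi_\theta$ by first transporting them, via $\alpha^{H\backslash G}_{t_\theta}$, to the single equivariant coordinate $\pi_{\theta_0}$. Keeping the transversal choice and the direction of the action consistent throughout is the main bookkeeping hazard.
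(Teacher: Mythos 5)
Your proposal is correct and follows essentially the same route as the paper: both use the Proposition to identify $IN_k(X_H^G,G)$ with $IN_k(X_H^G,H)$, push forward through the $H$-factor map $\pi_H$ (your $\pi_{\theta_0}$) via the Kerr--Li factor property, and transport the $\theta$-coordinate to the base coordinate by acting with the section representative, using the identity $\bigl(\alpha_{s(\theta)}^{H\backslash G}f\bigr)(H)=f(\theta)$ together with $G$-invariance of $IN_k$. The only difference is cosmetic ordering of the same three ingredients, so no further comment is needed.
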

	\noindent The first major result of this article establishes that a t.d.s. is null if and only if the co-induced system is null. 
	\begin{theorem}
		Let $H$ be a subgroup of $G$, $(X,H,\alpha)$ be a t.d.s. and $(X_{H}^{G},G,\alpha^{H\backslash G})$ be the co-induced system. If $[G:H] < +\infty$ then $(X_{H}^{G},G,\alpha^{H\backslash G})$ is null if and only if $(X,H,\alpha)$ is null. 
	\end{theorem}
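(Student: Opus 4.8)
The plan is to prove the two implications separately, using the Lemma for one direction and the Proposition for the other, after first recording the characterization of nullness in terms of $IN$-tuples. By the Huang--Ye description cited in the introduction, a system is null exactly when its maximal pattern entropy is $\log 1 = 0$, that is, when it has no $IN$-tuple of distinct points of length $\geq 2$. Since any subtuple of an $IN$-tuple is again an $IN$-tuple, this is equivalent to the statement that $IN_k$ contains only diagonal tuples for every $k\geq 2$. I will use the equivalence ``null $\iff IN_k$ consists only of diagonal tuples for all $k\geq 2$'' throughout.

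For the direction ``$H$ null $\Rightarrow$ co-induced null'', suppose $(X,H,\alpha)$ is null, so every tuple in $IN_k(X,H)$ is diagonal for $k\geq 2$. Take any $(\mathbf{x}^{(1)},\dots,\mathbf{x}^{(k)})\in IN_k(X_H^G,G)$ and write $\mathbf{x}^{(i)} = (x^{(i)}_\theta)_{\theta\in H\backslash G}$. The Lemma gives, for each coset $\theta$, that the coordinate tuple $(x^{(1)}_\theta,\dots,x^{(k)}_\theta)$ lies in $IN_k(X,H)$, and by hypothesis this tuple is diagonal, so $x^{(1)}_\theta = \dots = x^{(k)}_\theta$. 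As this holds for every $\theta$, the points $\mathbf{x}^{(1)},\dots,\mathbf{x}^{(k)}$ coincide, whence $IN_k(X_H^G,G)$ is diagonal and the co-induced system is null.

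For the converse I would use the Proposition together with a factor map. Applying the Proposition to the $G$-system $(X_H^G,\alpha^{H\backslash G})$ and its finite-index subgroup $H$ gives $IN_k(X_H^G,G) = IN_k(X_H^G,H)$, so the co-induced $G$-system is null if and only if the restricted $H$-system $(X_H^G,H)$ is null. I would then exhibit $(X,H,\alpha)$ as an $H$-factor of $(X_H^G,H)$ through the coordinate projection $\pi_{He}\colon \prod_{H\backslash G}X \to X$ onto the factor indexed by the identity coset $He$. Because every $h\in H$ stabilizes $He$ and acts on that coordinate exactly through $\alpha_h$, the map $\pi_{He}$ is continuous, surjective, and $H$-equivariant, hence a factor map of $H$-systems. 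Since nullness passes to factors --- the maximal pattern entropy, equivalently each topological sequence entropy, is non-increasing under factor maps, or alternatively any non-diagonal $IN$-tuple in the factor lifts to a non-diagonal $IN$-tuple in the extension --- the nullness of $(X_H^G,H)$ forces $(X,H,\alpha)$ to be null.

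The main obstacle I anticipate is verifying cleanly that $\pi_{He}$ is $H$-equivariant, which rests on the precise form of the co-induced action and on the fact that the induced $H$-action on the $He$-coordinate is conjugate to the original $\alpha$. If one favors the $IN$-tuple route over entropy monotonicity, the remaining work is to justify that $IN$-tuples lift along $\pi_{He}$: one pulls back an independence set for a neighborhood tuple $(U_1,\dots,U_k)$ of $(x_1,\dots,x_k)$ to an independence set for $(\pi_{He}^{-1}U_1,\dots,\pi_{He}^{-1}U_k)$ using equivariance and surjectivity, and then extracts a limiting $IN$-tuple inside $\prod_i \pi_{He}^{-1}(x_i)$ by compactness and the closedness of $IN_k$. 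Everything else reduces to the two cited results and the elementary remark that a point of $\prod_{H\backslash G}X$ determined by a tuple which is diagonal in every coordinate is itself diagonal.
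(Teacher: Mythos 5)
Your proof is correct, and its substantive half coincides with the paper's: to show that nullness of $(X,H,\alpha)$ forces nullness of the co-induction, both you and the paper apply Lemma \ref{IN-tuples in the finite index case} coordinatewise and observe that a tuple in $\left(X_{H}^{G}\right)^{k}$ which is diagonal in every coordinate is itself diagonal (the paper runs this as a contradiction argument for $k=2$ only, which suffices because any non-diagonal $IN$-tuple yields a non-diagonal $IN$-pair by passing to a subtuple, exactly as you note). For the converse implication your route differs mildly from the paper's: the paper simply invokes the inequality $h_{top}^{*}(X_{H}^{G},G)\geq h_{top}^{*}(X,H)$ established at the start of Section \ref{maximal pattern entropy section} by pulling back open covers along $\pi_{H}$, whereas you argue locally, reducing to the restricted $H$-system and then pushing $IN$-tuples forward along $\pi_{H}$ via Theorem \ref{properties of IN-tuples}(4); the lifting argument you sketch (pull back independence sets, extract a limit tuple by compactness) is precisely the content of that cited result, so nothing further needs to be proved, and the $H$-equivariance of $\pi_{H}$ that you flag as the main obstacle is already verified in Section \ref{co-induction prelim section}. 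One small economy: your appeal to Proposition \ref{finite index subgroup maximal pattern entropy IN-tuple lemma} in this direction is overkill, since only the trivial inclusion $IN_{k}(X_{H}^{G},H)\subseteq IN_{k}(X_{H}^{G},G)$ is needed there (finite index enters only in the reverse inclusion); in particular, as in the paper, this direction of the theorem is valid for an arbitrary subgroup $H\leq G$, with the finite-index hypothesis doing real work only in the direction handled by the Lemma.
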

	\noindent Following the proof of Theorem \ref{null iff null in finite index case}, we present an example illustrating that the finite index condition is essential for the preceding results to hold.\\
	\indent When $[G:H] < +\infty,$ it is natural to ask whether the maximal pattern entropy of the co-induction equals that of the product action. The next major result of this article confirms this to be true, provided the subgroup $H$ is contained within the center of $G.$ 
	\begin{theorem}
		Let $H\leq G$ such that $H\subseteq Z(G)$ and $[G:H] < +\infty.$ If $(X,H,\alpha)$ is a t.d.s. and $(X_{H}^{G},G,\alpha^{H\backslash G})$ is the co-induced system, then $$h_{top}^{*}(X_{H}^{G},G) = [G:H]h_{top}^{*}(X,H).$$ 
	\end{theorem}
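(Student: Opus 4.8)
The plan is to reduce the entire statement to a count of $IN$-tuples of pairwise distinct points, using the Huang--Ye characterization recalled in the introduction: for any system $(Y,G)$ one has $h_{top}^{*}(Y,G)=\log k$, where $k$ is the length of the longest $IN$-tuple of pairwise distinct points of $(Y,G)$ (with $\log(+\infty)=+\infty$). Writing $n=[G:H]$ and letting $k$ be this longest length for the base system $(X,H)$, the theorem becomes the assertion that the corresponding length for $(X_{H}^{G},G)$ equals $k^{n}$, since then $h_{top}^{*}(X_{H}^{G},G)=\log(k^{n})=n\log k=[G:H]\,h_{top}^{*}(X,H)$. I would fix coset representatives $g_{1},\dots,g_{n}$ and identify $X_{H}^{G}$ with $X^{n}$ via $f\mapsto(f(g_{1}),\dots,f(g_{n}))$, and then prove the matching upper and lower bounds $k^{n}$ on this length (the case $k=+\infty$ being handled by producing, for each finite $r$, an $IN$-tuple of $r^{n}\ge r$ distinct points, so that both sides are $+\infty$).

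For the upper bound I would invoke the Lemma. Let $(\xi_{1},\dots,\xi_{m})$ be an $IN$-tuple of pairwise distinct points of $(X_{H}^{G},G)$. By the Lemma its image lies in $\prod_{\theta}IN_{m}(X,H)$, so for each coset coordinate $\theta$ the projected tuple $(\xi_{1}(\theta),\dots,\xi_{m}(\theta))$ is an $IN$-tuple of $(X,H)$. Since $IN$-tuples are hereditary under passing to subtuples, the set of distinct values occurring in coordinate $\theta$ forms an $IN$-tuple of distinct points of $(X,H)$, hence has at most $k$ elements. As the $\xi_{i}$ are distinct, the map $i\mapsto(\xi_{i}(\theta))_{\theta}$ is injective into a product of $n$ sets each of size at most $k$, forcing $m\le k^{n}$.

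The lower bound is the heart of the argument and the place where $H\subseteq Z(G)$ is essential. Centrality gives $g_{i}h=hg_{i}$, so in the coordinates above $H$ acts diagonally, $\alpha_{h}(x_{1},\dots,x_{n})=(\alpha_{h}x_{1},\dots,\alpha_{h}x_{n})$; that is, $(X_{H}^{G},H)$ is exactly the $n$-fold diagonal product of $(X,H)$. I would take an $IN$-tuple $(a_{1},\dots,a_{k})$ of distinct points of $(X,H)$ realizing $k$ and set $\xi_{\phi}=(a_{\phi(1)},\dots,a_{\phi(n)})\in X^{n}$ for each $\phi\in\{1,\dots,k\}^{n}$; these $k^{n}$ points are pairwise distinct. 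To see $(\xi_{\phi})_{\phi}$ is an $IN$-tuple for the diagonal $H$-action, I shrink any given neighborhoods to product neighborhoods having the same factor $W_{j}\ni a_{j}$ in every coordinate; by the diagonal action $\bigcap_{s\in F}\alpha_{s}^{-1}\prod_{i}W_{\phi_{s}(i)}$ splits as a product over the $n$ coordinates, and because the choices across coordinates are independent, a finite set $F\subseteq H$ is an independence set for the family $\bigl(\prod_{i}W_{\phi(i)}\bigr)_{\phi}$ if and only if it is an independence set for $(W_{1},\dots,W_{k})$ in $(X,H)$. Such $F$ exist of arbitrarily large size since $(a_{1},\dots,a_{k})$ is an $IN$-tuple, so $(\xi_{\phi})_{\phi}$ is an $IN$-tuple of $(X_{H}^{G},H)$. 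Applying the Proposition to the $G$-system $X_{H}^{G}$ and its finite-index subgroup $H$ yields $IN_{k^{n}}(X_{H}^{G},H)=IN_{k^{n}}(X_{H}^{G},G)$, so $(\xi_{\phi})_{\phi}$ is an $IN$-tuple for $G$ as well, giving the bound $k^{n}$.

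I expect the main obstacle to be the independence-set bookkeeping in the lower bound, namely verifying cleanly that independence for the product family is equivalent to independence for the single family $(W_{1},\dots,W_{k})$. This equivalence holds precisely because the factor in each coordinate is the same system acted on by the same $H$-diagonal action; the role of centrality is exactly to prevent the representatives $g_{i}$ from conjugating $H$ and thereby twisting the action differently in each coordinate, which is what would break the correspondence for a merely normal (or arbitrary finite-index) subgroup.
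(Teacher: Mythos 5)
Your proposal is correct, but it takes a genuinely different route from the paper's. You share the paper's one key structural observation: since $H\subseteq Z(G)$, the section elements cannot conjugate $H$, so $\alpha^{H\backslash G}_{h}$ acts diagonally and $\left(X_{H}^{G},H,\alpha^{H\backslash G}\rvert_{H}\right)\cong (X^{r},H,\alpha^{r})$ with $r=[G:H]$. From there, however, the paper finishes in three lines by citing two global entropy identities as black boxes: Theorem \ref{finite index subgroup maximal pattern entropy} gives $h_{top}^{*}(X_{H}^{G},G)=h_{top}^{*}(X_{H}^{G},H)$, and the product formula of Proposition \ref{maximal pattern entropy product formula} gives $h_{top}^{*}(X^{r},H)=rh_{top}^{*}(X,H)$; it never needs Lemma \ref{IN-tuples in the finite index case}, and it needs no separate upper and lower bound. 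You instead work entirely with the discrete invariant of Theorem \ref{local maximal pattern entropy} and prove by hand that the maximal length of an intrinsic $IN$-tuple multiplies exactly from $k$ to $k^{n}$: your upper bound routes through Lemma \ref{IN-tuples in the finite index case} plus a subtuple-heredity and injectivity count, and your lower bound is in effect a self-contained independence-set proof of the $IN$-tuple analogue of Proposition \ref{maximal pattern entropy product formula} for diagonal actions --- the coordinatewise splitting $\bigcap_{s\in F}\bigl(\alpha_{s}^{n}\bigr)^{-1}\prod_{i}W_{\omega(s)(i)}=\prod_{i}\bigcap_{s\in F}\alpha_{s^{-1}}W_{\omega(s)(i)}$ is exactly right, as is your reduction of arbitrary neighborhoods to product neighborhoods sharing a common factor $W_{j}$ for each $a_{j}$ (intersect over the finitely many pairs $(\phi,i)$ with $\phi(i)=j$). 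A minor economy: for the lower bound you only need the trivial inclusion $IN_{k^{n}}(X_{H}^{G},H)\subseteq IN_{k^{n}}(X_{H}^{G},G)$, not the full strength of Proposition \ref{finite index subgroup maximal pattern entropy IN-tuple lemma}. What each approach buys: the paper's argument is shorter and modular, while yours is self-contained modulo Theorem \ref{local maximal pattern entropy}, yields the sharper structural statement $IN_{k^{n}}^{e}\left(X_{H}^{G},G\right)\neq\emptyset$ with the precise product tuples realizing it, and correctly isolates where centrality enters --- a diagnosis consistent with the paper's own counterexample, where $H=\mathbb{Z}$ is normal of index $2$ in $\mathbb{Z}\rtimes_{\psi}\mathbb{Z}/2\mathbb{Z}$ yet $h_{top}^{*}\left((X_1)_{\mathbb{Z}}^{G},G\right)=\log(2)=h_{top}^{*}(X_1,\mathbb{Z})$, so mere normality does not suffice.
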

	\noindent Following the proof of Theorem \ref{center subgroup theorem}, we conclude Section \ref{maximal pattern entropy finite index subsection} by introducing a t.d.s. $(X_1,\mathbb{Z})$ developed by Tan, Ye, and Zhang in \cite{TAN_Ye_Zhang_set_of_sequence_entropes} where $h_{top}^{*}(X_1,\mathbb{Z}) = \log(2).$ Let $G = \mathbb{Z}\rtimes_{\psi} \mathbb{Z}/2\mathbb{Z}$ where $\psi: \mathbb{Z}/2\mathbb{Z} \to Aut(\mathbb{Z})$ is the homomorphism defined by $\psi(x)(n) = -n$ and $\langle x\rangle = \mathbb{Z}/2\mathbb{Z}.$ Following the ideas of Snoha, Ye, and Zhang in \cite{Snhoa_Ye_Zhang_topological_sequence_entropy}, we show that $h_{top}^{*}((X_1)_{\mathbb{Z}}^{G},G) = \log(2)$ (Theorem \ref{co-induction counter-example calculation}). This result demonstrates the necessity of $H\subseteq Z(G)$ in Theorem \ref{center subgroup theorem}.\\
	\indent In Section \ref{maximal pattern entropy infinite index subsection}, we analyze the maximal pattern entropy of co-induced systems where $[G:H] = +\infty$. Using a group theory result by Neumann from \cite{Neumann_group_lemma}, we prove the final major result of this article. 
	\addtocounter{theorem}{9}
	\begin{theorem}
			Let $(X,H,\alpha)$ be a t.d.s. where $X$ contains at least two distinct points and $(X_{H}^{G},G,\alpha^{H\backslash G})$ be the co-induced system. If $[G:H] = +\infty$, then $$IN_{k}\left(X_{H}^{G},G\right) = \left(X_{H}^{G}\right)^{k}$$ for all $k\in \mathbb{N}$ and $h_{top}^{*}(X_{H}^{G},G) = +\infty.$ 
	\end{theorem}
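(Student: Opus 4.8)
The plan is to prove the stronger combinatorial statement $IN_k(X_H^G,G) = (X_H^G)^k$ directly; the entropy claim then follows immediately from the Huang--Ye characterization, since $X$ having at least two points forces $X_H^G = \prod_{H\backslash G} X$ to be infinite, so that $IN$-tuples of distinct points of every length $k$ exist and $h_{top}^*(X_H^G,G) = +\infty$. To show every tuple is an $IN$-tuple, I would fix $(x^{(1)},\dots,x^{(k)}) \in (X_H^G)^k$ and, using the product topology, reduce to basic neighborhoods $V_i = \{y : y_\theta \in U^{(i)}_\theta \text{ for } \theta \in \Theta\}$ determined by a common finite set of cosets $\Theta \subseteq H\backslash G$ (taking $U^{(i)}_\theta = X$ where a coordinate is unconstrained). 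It then suffices to produce, for every $N$, an independence set $F \subseteq G$ for $(V_1,\dots,V_k)$ with $|F| = N$.

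Next I would make the co-induced action explicit in coordinates. Fixing a transversal $\{t_\theta\}_{\theta \in H\backslash G}$ and writing a point as a function $f\colon G \to X$ with $f(hg') = \alpha_h(f(g'))$, one computes $(\alpha^{H\backslash G}_g y)_\theta = \alpha_{h(\theta,g)}(y_{\theta\cdot g})$, where $\theta \cdot g$ denotes the natural right action of $G$ on $H\backslash G$ and $h(\theta,g) \in H$ is the resulting cocycle. The key observation is that the requirement $\alpha^{H\backslash G}_g y \in V_{\sigma(g)}$ constrains exactly the $|\Theta|$ coordinates $\{\theta\cdot g : \theta \in \Theta\}$ of $y$, each to a nonempty open subset of $X$ (the twist by $\alpha_{h(\theta,g)}$ is harmless, being a homeomorphism). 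Hence, if the blocks $\Theta \cdot g := \{\theta\cdot g : \theta \in \Theta\}$ are pairwise disjoint as $g$ ranges over $F$, then for any $\sigma\colon F \to \{1,\dots,k\}$ the constraints live on disjoint coordinates and can be satisfied simultaneously by choosing $y$ coordinatewise; thus $F$ is an independence set. This reduces the theorem to a purely group-theoretic statement: for every finite $\Theta$ and every $N$ there is $F\subseteq G$ of size $N$ with the blocks $\Theta\cdot g$ pairwise disjoint.

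The heart of the argument, and the step I expect to be the main obstacle, is establishing this disjointness using $[G:H] = +\infty$ and Neumann's lemma. Let $R = \{t_\theta : \theta \in \Theta\}$ and set $S = \bigcup_{s,t \in R} s^{-1}Ht$. A direct coset computation shows $\Theta\cdot g_1 \cap \Theta\cdot g_2 \neq \emptyset$ if and only if $g_1 g_2^{-1} \in S$, so I want $g_i g_j^{-1} \notin S$ for $i \neq j$. Each set $s^{-1}Ht = (s^{-1}Hs)(s^{-1}t)$ is a coset of the conjugate subgroup $s^{-1}Hs$, which has infinite index, so $S$, together with every translate $Sg_j$ and $S^{-1}g_j$, is a finite union of cosets of infinite-index subgroups. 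I would then build $F = \{g_1,\dots,g_N\}$ inductively: having chosen $g_1,\dots,g_m$, the forbidden set $\bigcup_{j\le m}(Sg_j \cup S^{-1}g_j)$ is again a finite union of cosets of infinite-index subgroups, so by Neumann's lemma it cannot equal $G$; any $g_{m+1}$ outside it extends the set while preserving pairwise disjointness. This yields $F$ of arbitrary size.

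Combining the two reductions shows $(x^{(1)},\dots,x^{(k)})$ is an $IN$-tuple, hence $IN_k(X_H^G,G) = (X_H^G)^k$ for every $k$, and the entropy computation follows as above. The only points requiring care are the bookkeeping of the cocycle $h(\theta,g)$ in the coordinate formula (which does not affect the disjointness argument) and the precise invocation of Neumann's lemma to guarantee that a finite union of infinite-index cosets never exhausts $G$.
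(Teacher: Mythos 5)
Your proposal is correct and takes essentially the same approach as the paper: you reduce to basic neighborhoods over a finite set of cosets, observe that $F$ is an independence set once the blocks $\Theta\cdot g$ ($g\in F$) are pairwise disjoint (the cocycle twist being a harmless homeomorphism), and build $F$ recursively by choosing each new element outside a finite union of cosets of the infinite-index conjugates $s^{-1}Hs$, invoking Neumann's lemma exactly as in the paper's Lemma on avoiding such unions. The only cosmetic difference is that you encode the disjointness as $g_ig_j^{-1}\notin S=\bigcup_{s,t}s^{-1}Ht$, whereas the paper directly forbids $g_n\in\bigcup_{p<n}\bigcup_{i,j}a_i^{-1}Ha_jg_p$ --- the same condition.
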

	\addtocounter{section}{-2}
	\addtocounter{theorem}{1}
	\section{Preliminaries}
	
	In this section we will go through some notation we'll use throughout the paper. It will be split into two sections. In Section \ref{sequence entropy prelim} we will review definitions and results about topological sequence entropy, maximal pattern entropy, and $IN$-tuples. In Section \ref{co-induction prelim section} we will review the co-induction construction, and some known results involving the co-induction. For any set $X$, $\mathscr{F}(X)$ will represent the set of non-empty finite subsets of $X$, and we write for the diagonal $\Delta_k(X) = \{(x,\dots,x):x\in X\}$ in $X^k$. For $n\in \mathbb{N}$, we will denote $\{1,\dots,n\} = [n]$. A continuous map $\varphi: (X,G,\alpha) \to (Y,G,\beta)$ between two t.d.s. is called a \textbf{$G$-factor map} if $\varphi$ is surjective and $\varphi(\alpha_{g}(x)) = \beta_{g}(\varphi(x))$ for every $x\in X, g\in G$. We say that $(X,G,\alpha)$ and $(Y,G,\beta)$ are \textbf{conjugate} denoted by $(X,G,\alpha)\cong (Y,G,\beta)$ if $\varphi$ is a homeomorphism and a $G$-factor map. We will denote the set of open coverings of $X$ by $\mathcal{C}_{X}^{0}.$ 
	\subsection{Sequence Entropy, Maximal Pattern Entropy, and IN-Tuples}\label{sequence entropy prelim} Goodman in \cite{Goodman_sequence_entropy} formulated topological sequence entropy for integer actions on compact metric spaces. In a similar fashion, we can define topological sequence entropy for arbitrary group actions. Let $(X,G,\alpha)$ be a t.d.s. and $\mathcal{S}(G)$ represent the set of all sequences of $G$. For $\mathfrak{s}\in \mathcal{S}(G)$, we define the \textbf{topological sequence entropy of $(X,G,\alpha)$ with respect to $\mathfrak{s} = \{g_i\}_{i\in \mathbb{N}}$ and $\mathcal{U}\in \mathcal{C}_X^0$} by 
	\begin{align*}
		h_{top}^{\mathfrak{s}}(\mathcal{U},G) = \limsup_{n\to\infty}\frac{1}{n}\log\left(N\left(\bigvee_{i=1}^{n}\alpha_{g_i^{-1}}(\mathcal{U})\right)\right).
	\end{align*}
	where $N(\mathcal{U})$ is the minimal cardinality among all cardinalities of the sub-coverings of $\mathcal{U}.$ The \textbf{topological sequence entropy of $G$ acting on $X$ with respect to $\mathfrak{s}$} is 
	\begin{align*}
		h_{top}^{\mathfrak{s}}(X,G,\alpha) = \sup_{\mathcal{U}\in \mathcal{C}_{X}^{0}}h_{top}^{\mathfrak{s}}(\mathcal{U},G).
	\end{align*}
	If the action is assumed, we will write $h_{top}^{\mathfrak{s}}(X,G).$ \\
	\indent Huang and Ye in \cite{Huang_Ye_maximal_sequence_entropy} introduced \textbf{maximal pattern entropy} for arbitrary group actions. We define 
	\begin{align*}
		p_{X,G,\mathcal{U}}^{*}(n) = \max_{(g_1,\dots,g_n)\in G^{n}}N\left(\bigvee_{i=1}^{n}\alpha_{g_{i}^{-1}}(\mathcal{U})\right).
	\end{align*}	
	For $(g_1,\dots,g_{n+m})\in G^{n+m},$ we have that 
	\begin{align*}
		N\left(\bigvee_{i=1}^{n+m}\alpha_{g_{i}^{-1}}(\mathcal{U})\right)\leq N\left(\bigvee_{i=1}^{n}\alpha_{g_{i}^{-1}}(\mathcal{U})\right)	N\left(\bigvee_{i=n+1}^{n+m}\alpha_{g_{i}^{-1}}(\mathcal{U})\right);
	\end{align*}
	therefore, $p_{X,G,\mathcal{U}}^{*}(n+m)\leq p_{X,G,\mathcal{U}}^{*}(n)p_{X,G,\mathcal{U}}^{*}(m)$ and the sequence $\{\log(p_{X,G,\mathcal{U}}^{*}(n))\}_{n\in\mathbb{N}}$ is sub-additive. By Fekete's Lemma we have that the limit of the sequence $\left\{\frac{\log(p_{X,G,\mathcal{U}}^{*}(n))}{n}\right\}_{n\in\mathbb{N}}$ exists and 
	\begin{align*}
		\lim_{n\to\infty}\frac{1}{n}\log(p_{X,G,\mathcal{U}}^{*}(n)) = \inf_{n\in\mathbb{N}}\frac{1}{n}\log(p_{X,G,\mathcal{U}}^{*}(n)).
	\end{align*} 
	We define the \textbf{maximal pattern entropy with respect to $\mathcal{U}$} by
	\begin{align*}
		h_{top}^{*}(\mathcal{U},G) = \lim_{n\to\infty}\frac{1}{n}\log(p_{X,G,\mathcal{U}}^{*}(n)),
	\end{align*}
	and the \textbf{maximal pattern entropy of $G$ acting on $X$} by
	\begin{align*}
		h_{top}^{*}(X,G,\alpha) = \sup_{\mathcal{U}\in \mathcal{C}_X^0}h_{top}^{*}(\mathcal{U},G).
	\end{align*}
	If the action is assumed, we will write $h_{top}^{*}(X,G)$ to represent the maximal pattern entropy of $G$ acting on $X.$\\ 	
	\indent Huang and Ye proved another way to define maximal pattern entropy involving topological sequence entropy. They first proved it for $\mathbb{Z}$-actions (see Theorem 2.1(a) in \cite{Huang_Ye_maximal_sequence_entropy}), but their proof generalizes for any group actions. 
	\begin{theorem}[Huang and Ye, Theorem A.1 \cite{Huang_Ye_maximal_sequence_entropy}]\label{maximal pattern entropy sup def}
		Let $(X,G,\alpha)$ be a t.d.s. If $\mathcal{U}\in \mathcal{C}_{X}^{0}$, there exists $\mathfrak{s}\in \mathcal{S}(G)$ consisting of distinct elements of $G$ such that 
		$$h_{top}^{*}(\mathcal{U},G) = h_{top}^{\mathfrak{s}}(\mathcal{U},G).$$ Moreover, $$h_{top}^{*}(X,G) = \sup_{\mathfrak{s}\in S(G)}h_{top}^{\mathfrak{s}}(X,G).$$
	\end{theorem}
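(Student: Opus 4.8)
The plan is to derive both assertions from three elementary observations. First, for a finite set $F\subseteq G$ write $N(F):=N\left(\bigvee_{g\in F}\alpha_{g^{-1}}(\mathcal{U})\right)$; then $N$ is monotone under inclusion (a larger index set gives a finer join, hence a larger covering number) and is invariant under right translation, since $\bigvee_{g\in F}\alpha_{(gh)^{-1}}(\mathcal{U})=\alpha_{h^{-1}}\left(\bigvee_{g\in F}\alpha_{g^{-1}}(\mathcal{U})\right)$ and $\alpha_{h^{-1}}$ is a homeomorphism. Second, because the limit defining $h_{top}^*(\mathcal{U},G)$ equals the infimum, writing $a:=h_{top}^*(\mathcal{U},G)$ we have $\tfrac1m\log p_{X,G,\mathcal{U}}^*(m)\geq a$ for every $m$, while $p_{X,G,\mathcal{U}}^*(m)\leq N(\mathcal{U})^m$ forces $0\leq a\leq\log N(\mathcal{U})<\infty$, so the maximum $p_{X,G,\mathcal{U}}^*(m)$ is attained by some finite set. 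Third, the universal upper bound is immediate: for any $\mathfrak{s}=\{g_i\}\in\mathcal{S}(G)$ the tuple $(g_1,\dots,g_n)$ competes in the maximum, so $N(\{g_1,\dots,g_n\})\leq p_{X,G,\mathcal{U}}^*(n)$, whence $h_{top}^{\mathfrak{s}}(\mathcal{U},G)\leq h_{top}^*(\mathcal{U},G)$ for \emph{every} sequence.

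For the first statement it remains to build a sequence of distinct elements attaining this bound, which I would do by concatenating fast-growing optimal blocks. I would choose block sizes $m_1<m_2<\cdots$ growing so rapidly that $m_k/M_k\to1$, where $M_k:=m_1+\cdots+m_k$, and for each $k$ a set $F_k$ of size $m_k$ realizing $p_{X,G,\mathcal{U}}^*(m_k)$, so that $\tfrac1{m_k}\log N(F_k)=\tfrac1{m_k}\log p_{X,G,\mathcal{U}}^*(m_k)\to a$. To force the elements to be distinct across blocks I would use that $G$ is infinite: having fixed $F_1,\dots,F_{k-1}$, only finitely many right translates $F_kh$ meet the finite set $\bigcup_{j<k}F_j$, so I may replace $F_k$ by a disjoint translate, which by translation invariance still satisfies $N(F_k)=p_{X,G,\mathcal{U}}^*(m_k)$. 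Listing $F_1,F_2,\dots$ in order gives a sequence $\mathfrak{s}$ of distinct elements whose first $M_k$ terms are exactly $F_1\cup\cdots\cup F_k\supseteq F_k$; by monotonicity the join over this prefix has covering number at least $p_{X,G,\mathcal{U}}^*(m_k)$, so $\tfrac1{M_k}\log N(\{s_1,\dots,s_{M_k}\})\geq\tfrac{m_k}{M_k}\cdot a\to a$. Hence $h_{top}^{\mathfrak{s}}(\mathcal{U},G)=\limsup_n\tfrac1n\log N(\{s_1,\dots,s_n\})\geq a$, and together with the universal upper bound this gives $h_{top}^{\mathfrak{s}}(\mathcal{U},G)=a=h_{top}^*(\mathcal{U},G)$.

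The ``moreover'' then follows formally by taking suprema over $\mathcal{U}$. For each $\mathcal{U}$ the sequence $\mathfrak{s}_{\mathcal{U}}$ just constructed satisfies $h_{top}^*(\mathcal{U},G)=h_{top}^{\mathfrak{s}_{\mathcal{U}}}(\mathcal{U},G)\leq h_{top}^{\mathfrak{s}_{\mathcal{U}}}(X,G)\leq\sup_{\mathfrak{s}}h_{top}^{\mathfrak{s}}(X,G)$, and taking the supremum over $\mathcal{U}$ yields $h_{top}^*(X,G)\leq\sup_{\mathfrak{s}}h_{top}^{\mathfrak{s}}(X,G)$; conversely the universal upper bound gives $h_{top}^{\mathfrak{s}}(\mathcal{U},G)\leq h_{top}^*(\mathcal{U},G)\leq h_{top}^*(X,G)$ for all $\mathcal{U}$ and $\mathfrak{s}$, and taking suprema yields the reverse inequality. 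The main obstacle is the block construction of the middle paragraph: one must simultaneously guarantee that the chosen elements stay distinct (which the right-translation invariance of $N$ together with $|G|=\infty$ secures) and that concatenation does not dilute the $\limsup$ below $a$ (which the super-linear growth $m_k/M_k\to1$ secures, by making each block's contribution dominate the accumulated prefix).
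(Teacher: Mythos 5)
Your proof is correct, and since the paper does not prove this statement itself --- it quotes it as Theorem A.1 of Huang and Ye, remarking only that their $\mathbb{Z}$-action proof generalizes --- the right comparison is with that cited argument, which yours essentially reproduces: the translation invariance and monotonicity of $N$, the bound $h_{top}^{\mathfrak{s}}(\mathcal{U},G)\leq h_{top}^{*}(\mathcal{U},G)$ for every $\mathfrak{s}$, and the concatenation of optimal blocks of lengths $m_k$ with $m_k/M_k\to 1$ are exactly the ingredients of the standard Huang--Ye construction. Your use of right translates to force distinctness across blocks (licensed by $|G|=\infty$ and the invariance $N(F_kh)=N(F_k)$) correctly handles the one point where the general-$G$ statement needs more care than the $\mathbb{Z}$ case.
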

	Using Theorem \ref{maximal pattern entropy sup def} Huang and Ye prove the following theorem.
	\begin{theorem}[Huang and Ye, Theorem 2.3(2) \cite{Huang_Ye_maximal_sequence_entropy}]\label{finite index subgroup maximal pattern entropy}
		Let $(X,G,\alpha)$ be a t.d.s. and $H\leq G$ such that $[G:H] < + \infty,$ then 
		\begin{align*}
			h_{top}^{*}(X,G,\alpha) = h_{top}^{*}(X,H,\alpha\rvert_{H}).
		\end{align*}
	\end{theorem}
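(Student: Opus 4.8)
The plan is to prove the two inequalities separately; the reverse inequality is essentially formal, and all the content sits in $h_{top}^{*}(X,G)\leq h_{top}^{*}(X,H)$. For the easy direction I would argue directly at the level of the pattern-counting functions: for every $\mathcal{U}\in\mathcal{C}_X^0$ we have $H^n\subseteq G^n$, and for $h_i\in H$ the homeomorphism $\alpha_{h_i^{-1}}$ agrees with $(\alpha\rvert_H)_{h_i^{-1}}$, so the maximum defining $p^{*}_{X,H,\mathcal{U}}(n)$ is taken over a subset of the tuples defining $p^{*}_{X,G,\mathcal{U}}(n)$. Hence $p^{*}_{X,H,\mathcal{U}}(n)\leq p^{*}_{X,G,\mathcal{U}}(n)$ for all $n$, whence $h_{top}^{*}(\mathcal{U},H)\leq h_{top}^{*}(\mathcal{U},G)$ and, after taking suprema over $\mathcal{U}$, $h_{top}^{*}(X,H)\leq h_{top}^{*}(X,G)$. (One could instead route this through Theorem \ref{maximal pattern entropy sup def} by noting $\mathcal{S}(H)\subseteq\mathcal{S}(G)$, but the pattern-function argument is cleaner and avoids it.)

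For the main inequality I would again work directly with $p^{*}_{X,G,\mathcal{U}}(n)$ rather than through sequences. Fix $\mathcal{U}\in\mathcal{C}_X^0$ and a set of left-coset representatives $t_1,\dots,t_m$, so that $G=\bigsqcup_{j=1}^{m}t_jH$ with $m=[G:H]<+\infty$. The key device is the single open cover
$$\mathcal{W}=\bigvee_{j=1}^{m}\alpha_{t_j^{-1}}(\mathcal{U}),$$
which depends only on $\mathcal{U}$ and the chosen representatives and, crucially, not on $n$. Given an arbitrary tuple $(g_1,\dots,g_n)\in G^n$, I would write each $g_i=t_{j_i}h_i$ with $h_i\in H$, so that $g_i^{-1}=h_i^{-1}t_{j_i}^{-1}$ and therefore $\alpha_{g_i^{-1}}(\mathcal{U})=\alpha_{h_i^{-1}}\big(\alpha_{t_{j_i}^{-1}}(\mathcal{U})\big)$. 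Since $\mathcal{W}$ refines each $\alpha_{t_{j_i}^{-1}}(\mathcal{U})$ and applying the homeomorphism $\alpha_{h_i^{-1}}$ preserves refinement, the cover $\alpha_{h_i^{-1}}(\mathcal{W})$ refines $\alpha_{g_i^{-1}}(\mathcal{U})$; taking the join over $i$ preserves this, and monotonicity of $N(\cdot)$ under refinement yields
$$N\Big(\bigvee_{i=1}^{n}\alpha_{g_i^{-1}}(\mathcal{U})\Big)\leq N\Big(\bigvee_{i=1}^{n}\alpha_{h_i^{-1}}(\mathcal{W})\Big)\leq p^{*}_{X,H,\mathcal{W}}(n),$$
the last bound holding because $(h_1,\dots,h_n)\in H^n$. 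As $(g_1,\dots,g_n)$ was arbitrary, this gives $p^{*}_{X,G,\mathcal{U}}(n)\leq p^{*}_{X,H,\mathcal{W}}(n)$ for every $n$.

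Dividing by $n$, letting $n\to\infty$, and using the definition of $h_{top}^{*}(\,\cdot\,,\,\cdot\,)$ then gives $h_{top}^{*}(\mathcal{U},G)\leq h_{top}^{*}(\mathcal{W},H)\leq h_{top}^{*}(X,H)$; taking the supremum over $\mathcal{U}\in\mathcal{C}_X^0$ produces $h_{top}^{*}(X,G)\leq h_{top}^{*}(X,H)$, and combined with the first direction this closes the argument. The step I expect to require the most care is the coset bookkeeping together with the refinement estimate: one must use \emph{left} cosets so that the $H$-part $h_i^{-1}$ appears on the outside of $\alpha_{g_i^{-1}}=\alpha_{h_i^{-1}}\alpha_{t_{j_i}^{-1}}$ and can be factored out uniformly across all $n$ terms, and one must verify that replacing the $n$ distinct coset-translated covers $\alpha_{t_{j_i}^{-1}}(\mathcal{U})$ by the single fixed refinement $\mathcal{W}$ can only increase $N$. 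Everything else reduces to the sub-additivity and Fekete-limit facts already recorded in the preliminaries.
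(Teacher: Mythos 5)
Your proposal is correct, but it takes a genuinely different route from the proof this paper actually gives. The paper never touches the pattern-counting functions: it first proves Proposition \ref{finite index subgroup maximal pattern entropy IN-tuple lemma}, that $IN_{k}(X,G)=IN_{k}(X,H)$ whenever $[G:H]<+\infty$ --- via a pigeonhole argument that distributes a large independence set $F\subseteq G$ over the finitely many right cosets, extracts a coset $\theta_{\circ}$ with $|F_{\theta_{\circ}}|\geq |F|/[G:H]$, and translates $F_{\theta_{\circ}}s(\theta_{\circ})$ back into $H$ --- and then concludes by the local formula of Theorem \ref{local maximal pattern entropy}, $h_{top}^{*}(X,G)=\log(\sup\{k:IN_{k}^{e}(X,G)\neq\emptyset\})$, since the intrinsic $IN$-tuples coincide. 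Your argument is instead the direct generalization of Huang and Ye's original proof for $G=\mathbb{Z}$, $H=k\mathbb{Z}$, which the paper explicitly notes generalizes but does not write out: the left-coset decomposition $g_i=t_{j_i}h_i$ giving $\alpha_{g_i^{-1}}=\alpha_{h_i^{-1}}\alpha_{t_{j_i}^{-1}}$ is right, the fixed cover $\mathcal{W}=\bigvee_{j=1}^{m}\alpha_{t_j^{-1}}(\mathcal{U})$ refines each $\alpha_{t_j^{-1}}(\mathcal{U})$, refinement passes through the homeomorphisms $\alpha_{h_i^{-1}}$ and through joins, and monotonicity of $N$ under refinement yields the $n$-uniform bound $p^{*}_{X,G,\mathcal{U}}(n)\leq p^{*}_{X,H,\mathcal{W}}(n)$, which suffices after Fekete limits and suprema; the easy direction from $H^n\subseteq G^n$ is also fine and needs no finite-index hypothesis. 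What each approach buys: yours is elementary and self-contained, requiring neither Theorem \ref{maximal pattern entropy sup def} nor the Kerr--Li and Huang--Ye local machinery behind Theorem \ref{local maximal pattern entropy}, and it produces an explicit quantitative inequality between counting functions at every finite $n$; the paper's route is heavier but delivers the strictly finer structural statement $IN_{k}(X,G)=IN_{k}(X,H)$, which the entropy equality alone does not give and which the paper reuses immediately (in Lemma \ref{IN-tuples in the finite index case} and again in the proof of Lemma \ref{the key lemma}). It is worth observing that both proofs ultimately rest on the same finite-coset decomposition of $G$: you apply it once to covers to build the single refinement $\mathcal{W}$, while the paper applies it to independence sets via the pigeonhole principle.
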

	Huang and Ye originally proved Theorem \ref{finite index subgroup maximal pattern entropy} when $G = \mathbb{Z}$ and $H = k\mathbb{Z}$. The proof they use generalizes for any group $G$ and subgroup $H$ when $[G:H] < +\infty.$ We will outline another proof of Theorem \ref{finite index subgroup maximal pattern entropy} using local entropy theory at the beginning of Section \ref{maximal pattern entropy finite index subsection}. \\
	\indent Using the separated/spanning set definition of sequence entropy presented by Goodman in \cite{Goodman_sequence_entropy}, Goodman proved for any $\mathfrak{s}\in S(G)$ that $$h_{top}^{\mathfrak{s}}(X^k,G,\alpha^{k}) = kh_{top}^{\mathfrak{s}}(X,G,\alpha)$$ where $\alpha^{k}$ is the product action $\alpha_g^{k}(x_1,\dots,x_k) = (\alpha_g(x_1),\dots,\alpha_{g}(x_k))$ (Proposition 2.4 \cite{Goodman_sequence_entropy}). Goodman originally proved this result for $\mathbb{Z}$ actions, and his proof generalizes for any group $G.$ Using Goodman's result and Theorem \ref{maximal pattern entropy sup def}, the following  result holds 
	\begin{proposition}[Huang and Ye, Lemma 2.5 \cite{Huang_Ye_maximal_sequence_entropy}]\label{maximal pattern entropy product formula}
		Let $(X,G,\alpha)$ be a t.d.s., then $$h_{top}^{*}(X^k,G,\alpha^{k}) = kh_{top}^{*}(X,G,\alpha).$$
	\end{proposition}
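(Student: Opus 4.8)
The plan is to derive the identity directly from the two results recalled just before the statement, namely Goodman's per-sequence product formula $h_{top}^{\mathfrak{s}}(X^k,G,\alpha^k) = k\,h_{top}^{\mathfrak{s}}(X,G,\alpha)$ (valid for \emph{every} $\mathfrak{s} \in S(G)$) and the supremum characterization of maximal pattern entropy in Theorem \ref{maximal pattern entropy sup def}. Because Goodman's identity already holds sequence by sequence, the entire argument collapses to passing the constant factor $k$ through a supremum over $S(G)$, so there is no fresh dynamical input to supply.

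Concretely, I would first apply the global form of Theorem \ref{maximal pattern entropy sup def} to the product system $(X^k,G,\alpha^k)$ to write its maximal pattern entropy as a supremum of sequence entropies, then substitute Goodman's formula termwise inside that supremum, then factor out $k$, and finally apply Theorem \ref{maximal pattern entropy sup def} once more to the base system $(X,G,\alpha)$. The resulting chain is
\begin{align*}
h_{top}^*(X^k,G,\alpha^k) &= \sup_{\mathfrak{s} \in S(G)} h_{top}^{\mathfrak{s}}(X^k,G,\alpha^k) \\
&= \sup_{\mathfrak{s} \in S(G)} k\,h_{top}^{\mathfrak{s}}(X,G,\alpha) \\
&= k \sup_{\mathfrak{s} \in S(G)} h_{top}^{\mathfrak{s}}(X,G,\alpha) \\
&= k\,h_{top}^*(X,G,\alpha),
\end{align*}
which is exactly the claimed equality.

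The only step that deserves a word of justification is the third equality: pulling $k$ outside the supremum is legitimate because multiplication by a fixed positive integer is an order isomorphism of $[0,+\infty]$ and therefore commutes with arbitrary suprema, the infinite case $k\cdot(+\infty) = +\infty$ included. All genuine content lives in Goodman's identity, whose proof runs through the separated/spanning-set description of sequence entropy — an $(\mathfrak{s},n,\epsilon)$-spanning set for the product metric on $X^k$ factors into spanning sets on the coordinates — and which I would invoke as a black box exactly as the surrounding exposition does, citing Proposition 2.4 of \cite{Goodman_sequence_entropy}. Consequently the main (and essentially only) obstacle is bookkeeping: making sure the supremum characterization is applied in its global form and that the termwise substitution is valid uniformly in $\mathfrak{s}$, both of which are immediate once Goodman's formula is granted for all sequences.
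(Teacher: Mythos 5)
Your proposal is correct and follows exactly the route the paper itself indicates: it derives Proposition \ref{maximal pattern entropy product formula} by combining Goodman's per-sequence identity $h_{top}^{\mathfrak{s}}(X^k,G,\alpha^k)=k\,h_{top}^{\mathfrak{s}}(X,G,\alpha)$ with the supremum characterization of maximal pattern entropy in Theorem \ref{maximal pattern entropy sup def}, applied to both the product system and the base system. Your extra remark that the constant $k$ passes through the supremum (including the $+\infty$ case) is a harmless elaboration of the same argument, not a departure from it.
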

	The second definition is characterized by local entropy theory and $IN$-tuples. $IN$-tuples were first introduced by Kerr and Li in \cite{Kerr_Li_Independence} where topological sequence entropy was studied in terms of independence. Particularly, the authors used $IN$-tuples to determine whether a t.d.s. is \textbf{null}. A t.d.s. is null if $h_{top}^{\mathfrak{s}}(X,G) = 0$ for every $\mathfrak{s}\in S(G)$, and \textbf{nonnull} if $h_{top}^{\mathfrak{s}}(X,G) > 0$ for some $\mathfrak{s}\in S(G).$ Using Theorem \ref{maximal pattern entropy sup def}, we see that a t.d.s. is null if and only if $h_{top}^{*}(X,G) = 0.$ \\
	\indent We begin our discussion of $IN$-tuples by first defining independence. A set $I\subseteq G$ is said to be an \textbf{independence set} for a tuple $(A_1,\dots,A_k)$ of subsets of $X$ if 
	\begin{align*}
		\bigcap_{g\in F}\alpha_{g^{-1}}(A_{\omega(g)}) \neq \emptyset
	\end{align*}
	for every non-empty finite subset $F$ of $I$ and every function $\omega: F\to [k].$ A tuple $(x_1,\dots,x_k)\in X^{k}$ is said to be an \textbf{$IN$-tuple} if for any product neighborhood $U_1\times\ldots\times U_k$ of $(x_1,\dots,x_k)$ the tuple $(U_1,\dots,U_k)$ has arbitrarily large finite independence sets. We denote the set of IN-tuples of length $k$ by $IN_{k}(X,G,\alpha).$ If the action is assumed we will simply write $IN_k(X,G).$ The following theorem proven by Kerr and Li lists some basic properties of $IN$-tuples. 
	\begin{theorem}[Kerr and Li, Proposition 5.4 \cite{Kerr_Li_Independence}]\label{properties of IN-tuples}
		Let $(X,G,\alpha)$ be a t.d.s. The following are true:\\
		1. Let $(A_1,\dots,A_k)$ be a tuple of closed subsets of $X$ which has arbitrarily large finite independence sets. Then there exists an IN-tuple $(x_1,\dots,x_k)$ with $x_j\in A_j$ for all $1\leq j\leq k.$\\
		2. $IN_2(X,G)\backslash \Delta_2(X)$ is nonempty if and only if $(X,G,\alpha)$ is nonnull. \\
		3. $IN_k(X,G)$ is a closed $G$-invariant subset of $X^k$ under the product action.\\
		4. Let $\pi:(X,G,\alpha)\to (Y,G,\beta)$ be a G-factor map. Then $(\pi\times\ldots\times\pi)(IN_k(X,G)) = IN_k(Y,G).$\\
		5. Suppose that $Z$ is a closed $G$-invariant subset of $X.$ Then $IN_k(Z,G)\subseteq IN_k(X,G).$ 
	\end{theorem}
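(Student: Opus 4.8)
The plan is to treat Property~1 as the engine from which the other four statements follow, organizing everything around one monotonicity principle: if $A_j\subseteq B_j$ for every $j$, then every independence set of $(A_1,\dots,A_k)$ is also an independence set of $(B_1,\dots,B_k)$, since enlarging the sets only enlarges each intersection $\bigcap_{g\in F}\alpha_{g^{-1}}(A_{\omega(g)})$. This lets me transfer the property of having arbitrarily large finite independence sets freely from a tuple to any larger tuple, and it is the common mechanism behind closedness, behavior under factor maps, and passage to subsystems.

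For Property~1 I would first isolate the combinatorial core: if $(A_1,\dots,A_k)$ has arbitrarily large finite independence sets and each $A_j$ is written as a finite union $A_j=\bigcup_{i=1}^{m}A_j^{(i)}$, then some cell selection $(A_1^{(i_1)},\dots,A_k^{(i_k)})$ still has arbitrarily large finite independence sets. I would prove this by a pigeonhole argument: an independence set $I$ of size $N$ records a witnessing point for each coloring $\omega\colon I\to[k]$, while refining each $A_j$ into $m$ cells produces only $m^{k}$ candidate tuples, so if all of them had independence sets bounded by a fixed $L$ we reach a contradiction once $N$ is large relative to $L$ and $m$. Granting this lemma, I would apply it along covers of the $A_j$ by closed sets of diameter tending to $0$ and extract, by a nested compactness argument in the compact metric space $X$, a point $(x_1,\dots,x_k)$ with $x_j\in A_j$ such that every product neighborhood contains a cell tuple with arbitrarily large independence sets; by monotonicity these are independence sets of the neighborhood tuple itself, so $(x_1,\dots,x_k)\in IN_k(X,G)$. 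I expect essentially all of the difficulty to be concentrated in this lemma and the extraction.

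Properties~3, 5, and~4 then follow with little further work. For the $G$-invariance in Property~3 I would check that if $I$ is an independence set for a neighborhood tuple $(U_1,\dots,U_k)$ of $(x_1,\dots,x_k)$, then $hI$ is an independence set of the same size for the neighborhood tuple $(\alpha_h U_1,\dots,\alpha_h U_k)$ of $(\alpha_h x_1,\dots,\alpha_h x_k)$, which is a direct reindexing in the defining intersections; closedness holds because a convergent sequence of $IN$-tuples eventually enters any prescribed product neighborhood of its limit, whose independence sets dominate those of the smaller neighborhoods by monotonicity. Property~5 is immediate from the identity $\bigcap_{g\in F}(\alpha\rvert_Z)_{g^{-1}}(U_{\omega(g)}\cap Z)=Z\cap\bigcap_{g\in F}\alpha_{g^{-1}}(U_{\omega(g)})$, so an independence set computed inside a closed $G$-invariant $Z$ is automatically one in $X$. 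For Property~4 the inclusion $(\pi\times\cdots\times\pi)(IN_k(X,G))\subseteq IN_k(Y,G)$ is immediate from equivariance, since $\pi^{-1}$ commutes with the defining intersections and carries independence sets of neighborhoods $V_j$ of $\pi(x_j)$ down to independence sets of $\pi^{-1}(V_j)$; for the reverse inclusion I would fix an $IN$-tuple $(y_1,\dots,y_k)$ in $Y$, take closed neighborhoods $\overline{V_j^{(m)}}\downarrow\{y_j\}$, apply Property~1 to the closed sets $\pi^{-1}(\overline{V_j^{(m)}})$ (which inherit arbitrarily large independence sets from the $V_j^{(m)}$) to obtain $IN$-tuples $(x_1^{(m)},\dots,x_k^{(m)})$ with $\pi(x_j^{(m)})\in\overline{V_j^{(m)}}$, and then use compactness of $X^{k}$ with closedness (Property~3) to pass to a limit $(x_1,\dots,x_k)\in IN_k(X,G)$ satisfying $\pi(x_j)=y_j$.

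Finally, Property~2 requires the bridge between nullness and independence numbers of pairs. For the easy direction, a nondiagonal $IN$-pair $(x_1,x_2)$ supplies disjoint open sets $U_1\ni x_1$, $U_2\ni x_2$ with arbitrarily large independence sets; choosing disjoint closed neighborhoods $\overline{V_i}\subseteq U_i$ and testing against the two-element open cover $\mathcal{U}=\{X\setminus\overline{V_1},\,X\setminus\overline{V_2}\}$, each independence set $I$ forces $N\!\left(\bigvee_{g\in I}\alpha_{g^{-1}}(\mathcal{U})\right)\geq 2^{|I|}$, so enumerating a growing union of independence sets as a sequence $\mathfrak{s}$ gives $h_{top}^{\mathfrak{s}}(\mathcal{U},G)\geq\log 2$ and the system is nonnull. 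Conversely, if the system is nonnull then some cover $\mathcal{U}$ and sequence satisfy $N\!\left(\bigvee_{i=1}^{n}\alpha_{g_i^{-1}}(\mathcal{U})\right)\geq e^{cn}$ along a subsequence; a Sauer--Shelah type counting lemma then yields two members of a refinement of $\mathcal{U}$ admitting independence sets of proportional, hence unbounded, size, and Property~1 converts these into an $IN$-pair lying in two disjoint sets, which is necessarily nondiagonal. The only genuinely new analytic input beyond the lemma of Property~1 is this Sauer--Shelah counting step turning exponential cover growth into linear independence.
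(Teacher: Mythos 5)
You should first note that the paper itself contains no proof of Theorem \ref{properties of IN-tuples}: it is imported verbatim from Kerr and Li \cite{Kerr_Li_Independence} (Proposition 5.4 there, proved by the same scheme as their Proposition 3.9 for $IE$-tuples). Measured against that original proof, your architecture is essentially the right one: the monotonicity principle, the translation computation showing $hI$ is an independence set for $(\alpha_h U_1,\dots,\alpha_h U_k)$, the trivial closedness argument, the intersection identity for item 5, the pullback-of-independence argument plus item 1 plus closedness for item 4, and for item 2 the $2^{|I|}$ lower bound on $N\bigl(\bigvee_{g\in I}\alpha_{g^{-1}}(\mathcal{U})\bigr)$ in one direction and a Sauer--Shelah counting step in the other --- all of these are correct and track \cite{Kerr_Li_Independence} closely.

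The genuine gap is in the combinatorial core of Property 1, which you dispose of by ``pigeonhole.'' The cell-selection lemma is true, but your sketch is not a proof: assuming every cell tuple $(A_1^{(i_1)},\dots,A_k^{(i_k)})$ has independence number at most $L$ gives no immediate contradiction with an independence set $I$ of size $N$, because the $k^{N}$ witnessed colorings, once refined to $([k]\times[m])$-valued colorings, need not contain any full combinatorial cube over a large subset $J\subseteq I$ with a \emph{consistent} cell choice --- and that is precisely what ``$J$ is an independence set for some cell tuple'' demands. Extracting such a cube is the content of the Karpovsky--Milman generalization of Sauer--Shelah (Kerr--Li's Lemma 3.3, feeding their Lemma 3.5 dichotomy, which produces $J$ with $|J|\geq c|I|$ independent for one of the two halves of a split set); one must show that the \emph{absence} of an $L$-dimensional cube forces the family of refined colorings to have subexponential size, contradicting the fact that it projects onto all of $[k]^{I}$. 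So the counting machinery you explicitly defer to Property 2 is equally indispensable for Property 1, and as written your central lemma is asserted rather than proved. One smaller repair: in item 2, enumerating ``a growing union of independence sets'' as $\mathfrak{s}$ gives $h_{top}^{\mathfrak{s}}(\mathcal{U},G)>0$ only if the sizes $|I_j|$ grow fast relative to $\sum_{i<j}|I_i|$ (the $\limsup$ along the concatenation is $0$ if, say, $|I_j|=j$); this is easily arranged but needs saying.
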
Huang et al in \cite{Huang_Shao_Ye_sequence_entropy_pairs} introduced sequence entropy pairs to characterize u.p.s.e. and null systems. A pair $(x_1,x_2)\in X^{2}\backslash\Delta_2(X)$ is a \textbf{sequence entropy pair} if for any closed, disjoint neighborhoods $A_1,A_2$ of $x_1,x_2$, respectively, there exists a $\mathfrak{s}\in S(G)$ such that $h_{top}^{\mathfrak{s}}(X,G,\{A_1^c,A_2^c\}) > 0.$  A t.d.s. is u.p.s.e. if every pair in $X^2\backslash \Delta_2(X)$ is a sequence entropy pair. Before we state the next result, recall a t.d.s. $(X,G,\alpha)$ is called \textbf{weak-mixing} if for all non-empty open sets $U_1,U_2,V_1,V_2$ there exists a $g\in G$ such that $U_1\cap \alpha_{g}(V_1)\neq \emptyset$ and $U_2\cap \alpha_{g}(V_2)\neq \emptyset.$ 
	\begin{theorem}[Huang, Li, Shao, Ye, Theorem 2.1 \cite{Huang_Shao_Ye_sequence_entropy_pairs}]\label{upse equivalence to weakmixing}
		Let $G$ be an abelian group and $(X,G,\alpha)$ be a t.d.s. Then $(X,G,\alpha)$ is weak-mixing if and only if $(X,G,\alpha)$ is u.p.s.e.
	\end{theorem}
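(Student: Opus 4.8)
The plan is to route everything through combinatorial independence. The first step is to observe that u.p.s.e. is equivalent to the statement $IN_2(X,G) = X^2$. Indeed, by the Kerr--Li identification of non-diagonal $IN$-pairs with sequence entropy pairs, a pair $(x_1,x_2)$ with $x_1 \neq x_2$ is a sequence entropy pair exactly when every product neighborhood $U_1 \times U_2$ of it has arbitrarily large finite independence sets, i.e. exactly when $(x_1,x_2) \in IN_2(X,G)$. The quantitative half of this identification is the estimate I will reuse below: if $A_1, A_2$ are disjoint closed neighborhoods of $x_1, x_2$ with interiors $U_1, U_2$, and $\{g_1,\dots,g_n\}$ is an independence set for $(U_1,U_2)$, then for each $\omega \colon [n]\to[2]$ a point of $\bigcap_{i=1}^{n}\alpha_{g_i^{-1}}(U_{\omega(i)})$ is forced into the atom $\bigcap_{i=1}^{n}\alpha_{g_i^{-1}}(A_{3-\omega(i)}^{c})$ of $\bigvee_{i=1}^{n}\alpha_{g_i^{-1}}\{A_1^{c},A_2^{c}\}$ and into no other, so $N\!\left(\bigvee_{i=1}^{n}\alpha_{g_i^{-1}}\{A_1^{c},A_2^{c}\}\right)\geq 2^{n}$ and the associated sequence entropy is at least $\log 2$. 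Thus it suffices to prove, for abelian $G$, that $IN_2(X,G)=X^2$ if and only if $(X,G,\alpha)$ is weak-mixing.

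For the direction from weak-mixing to u.p.s.e., I would first upgrade weak-mixing to weak-mixing of all orders, that is, to transitivity of $(X^{n},G,\alpha^{n})$ for every $n$; for abelian $G$ this is Furstenberg's theorem, whose proof through the filter property of the return-time sets $\{g\in G:\alpha_g U\cap V\neq\emptyset\}$ goes through unchanged. Granting this, I would build arbitrarily large independence sets for an arbitrary pair of nonempty open sets $(U_1,U_2)$ by induction on size. If $\{g_1,\dots,g_m\}$ is already independent, its $2^{m}$ open atoms $A_\omega = \bigcap_{i\leq m}\alpha_{g_i^{-1}}(U_{\omega(i)})$ are nonempty, and transitivity of $X^{2^{m+1}}$ supplies a single $g_{m+1}$ with $\alpha_{g_{m+1}}(A_\omega)\cap U_j\neq\emptyset$ for all $2^{m+1}$ pairs $(\omega,j)$; this is exactly the condition that $\{g_1,\dots,g_{m+1}\}$ remain independent. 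Hence every pair belongs to $IN_2(X,G)$, and by the estimate of the first paragraph every non-diagonal pair is a sequence entropy pair, so $(X,G,\alpha)$ is u.p.s.e.

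For the converse I start from $IN_2(X,G)=X^2$. Transitivity of $(X,G,\alpha)$ is immediate, since a two-element independence set for any pair $(U,V)$ of nonempty open sets produces a $g$ with $\alpha_g U\cap V\neq\emptyset$. The structural input is the maximal equicontinuous factor $\pi\colon (X,G)\to (X_{eq},G)$, which exists for every t.d.s. Equicontinuous systems are null (they have zero topological sequence entropy), so by part 2 of the properties of $IN$-tuples $IN_2(X_{eq},G)\subseteq\Delta_2(X_{eq})$. By the factor property (part 4) and surjectivity of $\pi$,
\[
X_{eq}\times X_{eq} = (\pi\times\pi)(X^2) = (\pi\times\pi)(IN_2(X,G)) = IN_2(X_{eq},G)\subseteq \Delta_2(X_{eq}),
\]
so $X_{eq}$ is a single point and the maximal equicontinuous factor is trivial.

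The main obstacle is the last step: passing from triviality of the maximal equicontinuous factor to weak-mixing. This is precisely where commutativity of $G$ is indispensable, since for abelian group actions the equicontinuous, or eigenvalue, structure is exactly the obstruction to weak-mixing, in parallel with the measure-theoretic principle that weak-mixing is the absence of non-constant eigenfunctions. I would argue by contraposition: if $(X,G,\alpha)$ is not weak-mixing, then $(X\times X,G,\alpha\times\alpha)$ is not transitive, and from a proper closed invariant subset of $X\times X$ carrying the resulting recurrence obstruction I would manufacture a non-constant continuous eigenfunction, hence a non-trivial equicontinuous factor, contradicting the preceding paragraph. Making this extraction rigorous, rather than merely invoking the classical structure theory for abelian actions, is the delicate point on which the whole converse turns.
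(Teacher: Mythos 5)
Your forward implication (weak-mixing $\Rightarrow$ u.p.s.e.) is essentially correct, and it is worth noting that the paper itself gives no proof of this theorem --- it is quoted from Huang--Li--Shao--Ye --- so your argument stands or falls on its own. The identification of u.p.s.e. with $IN_2(X,G)=X^2$ is legitimate (non-diagonal sequence entropy pairs equal non-diagonal $IN$-pairs by Theorem \ref{sequence entropy tuples same as IN-tuples}, and closedness of $IN_2$ from Theorem \ref{properties of IN-tuples}(3) absorbs the diagonal when $|X|\geq 2$); the Furstenberg upgrade to weak mixing of all orders is where abelianness genuinely enters, via the filter lemma: with $U=U_1\cap\alpha_g(U_2)$ and $V=V_1\cap\alpha_g(V_2)$ one gets $N(U,V)\subseteq N(U_1,V_1)\cap N(g^{-1}\cdot):$ the conjugation $g^{-1}hg=h$ is exactly the commutativity used, and your doubling induction on independence sets is the standard and correct construction. (The passage from arbitrarily large finite independence sets to a single sequence with positive sequence entropy needs rapidly growing blocks, but this is standard and is in any case subsumed by the quoted Kerr--Li theorem.)

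The converse, however, has a genuine gap, and it is precisely the step you flag. Your reduction to a trivial maximal equicontinuous factor is valid but lossy: triviality of the maximal equicontinuous factor together with transitivity does \emph{not} imply weak mixing outside the minimal category, so the eigenfunction you hope to ``manufacture'' from the failure of weak mixing need not exist. Concretely, let $X=\mathbb{Z}\cup\{\infty\}$ be the one-point compactification of $\mathbb{Z}$ with the translation $\alpha_g(n)=n+g$ fixing $\infty$. This system is transitive; any equicontinuous factor of it is transitive, hence minimal, and contains the fixed point $\pi(\infty)$, hence is a singleton, so the maximal equicontinuous factor is trivial. Yet the system is not weak-mixing: taking $U_1=\{0\}$, $U_2=\{1\}$, $V_1=V_2=\{5\}$, the first condition forces $g=-5$ and the second $g=-4$. (This does not contradict the theorem, since here $IN_2\neq X^2$ --- two distinct singleton targets kill every independence set of size two --- but it shows that passing to the maximal equicontinuous factor discards exactly the hypothesis you need.) The equivalence ``trivial maximal equicontinuous factor iff weak-mixing'' is a theorem about \emph{minimal} abelian systems, and a u.p.s.e. system need not be minimal. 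A correct converse must use the full strength of $IN_2(X,G)=X^2$ directly: already an independence set of the form $\{e,g\}$ for a pair of nonempty open sets couples two intersection conditions at the \emph{same} group element, which is the simultaneity that weak mixing demands and that transitivity alone cannot supply; this direct route, rather than contraposition through eigenvalue theory, is how the cited proof of Huang--Li--Shao--Ye proceeds.
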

	Huang et al in \cite{Huang_Maass_Ye_sequence_entropy_tuples} expanded the definition of sequence entropy pairs to \textbf{sequence entropy tuples}. A tuple $(x_1,\dots,x_k)\in X^{k}\backslash \Delta_{k}(X)$ is a sequence entropy tuple if whenever $U_1,\dots,U_n$ are pairwise disjoint closed neighborhoods of distinct points in the list $\{x_1,\dots,x_k\}$, there exists $\mathfrak{s}\in S(G)$ such that $h_{top}^{\mathfrak{s}}(\{U_1^c,\dots,U_n^{c}\}) > 0.$ Kerr and Li proved the following result about the relation between sequence entropy tuples and $IN$-tuples. 
	\begin{theorem}[Kerr and Li, Theorem 5.9 \cite{Kerr_Li_Independence}]\label{sequence entropy tuples same as IN-tuples}
		Let $(x_1,\dots,x_k)$ be a tuple in $X^{k}\backslash \Delta_k(X)$ with $k\geq 2.$ Then $(x_1,\dots,x_k)$ is a sequence entropy tuple if and only if it is an $IN$-tuple.
	\end{theorem}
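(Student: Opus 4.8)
The plan is to establish the two implications separately, after a preliminary reduction. First I would reduce to the case where the coordinates $x_1,\dots,x_k$ are pairwise distinct. If $y_1,\dots,y_n$ denote the distinct values occurring in the list $\{x_1,\dots,x_k\}$, then a short comparison of independence sets shows $(x_1,\dots,x_k)\in IN_k(X,G)$ if and only if $(y_1,\dots,y_n)\in IN_n(X,G)$: given a product neighborhood of one tuple one produces a product neighborhood of the other by repeating, respectively intersecting, the neighborhoods of repeated coordinates, and an independence set for the finer tuple is automatically one for the coarser. Since the definition of a sequence entropy tuple refers only to the distinct points, the same reduction applies to it, so it suffices to treat $n=k$ with $x_1,\dots,x_k$ distinct. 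Throughout I will use the dictionary that, for pairwise disjoint closed neighborhoods $W_1,\dots,W_k$ of the distinct points and a finite set $J\subseteq G$, the set $J$ is an independence set for $(W_1,\dots,W_k)$ precisely when every pattern $\omega\colon J\to[k]$ is realized, i.e. $\bigcap_{g\in J}\alpha_{g^{-1}}(W_{\omega(g)})\neq\emptyset$; thus independence sets are exactly the sets on which all $k$-valued ``itineraries'' occur.

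For the implication that an $IN$-tuple is a sequence entropy tuple, fix pairwise disjoint closed neighborhoods $W_1,\dots,W_k$ of $x_1,\dots,x_k$ and set $\mathcal{U}=\{W_1^c,\dots,W_k^c\}$. If $J\subseteq G$ is an independence set for $(W_1,\dots,W_k)$, a counting argument bounds the covering number below: for each pattern $\omega\colon J\to[k]$ choose $p_\omega\in\bigcap_{g\in J}\alpha_{g^{-1}}(W_{\omega(g)})$, and note that an element $\bigcap_{g\in J}\alpha_{g^{-1}}(W_{c(g)}^c)$ of the refined cover can contain $p_\omega$ only when $c(g)\neq\omega(g)$ for every $g$, so each element accounts for at most $(k-1)^{|J|}$ of the $k^{|J|}$ patterns, whence
\[
N\!\left(\bigvee_{g\in J}\alpha_{g^{-1}}(\mathcal{U})\right)\ \ge\ \left(\tfrac{k}{k-1}\right)^{|J|}.
\]
Because the hypothesis supplies independence sets $J_\ell$ with $|J_\ell|\to\infty$, I would build a single $\mathfrak{s}\in S(G)$ by concatenating the $J_\ell$ in blocks whose lengths grow fast enough that each block eventually dominates the total (say block $\ell$ longer than all earlier blocks combined). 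Since the join over a prefix depends only on the set of group elements appearing and refines the join over the block $J_\ell$ inside it, the displayed bound gives $\limsup_m\frac1m\log N(\bigvee_{i=1}^m\alpha_{g_i^{-1}}\mathcal{U})>0$, so $h_{top}^{\mathfrak{s}}(\mathcal{U},G)>0$ and $(x_1,\dots,x_k)$ is a sequence entropy tuple.

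For the converse, suppose $(x_1,\dots,x_k)$ is a sequence entropy tuple and fix an arbitrary product neighborhood $U_1\times\dots\times U_k$, shrinking it to pairwise disjoint closed neighborhoods $W_i\subseteq U_i$. The hypothesis yields $\mathfrak{s}=\{g_i\}\in S(G)$ with $h_{top}^{\mathfrak{s}}(\{W_1^c,\dots,W_k^c\},G)>0$, so $N_m:=N(\bigvee_{i=1}^m\alpha_{g_i^{-1}}\{W_1^c,\dots,W_k^c\})$ grows exponentially along a subsequence. To each $x\in X$ I attach its itinerary $\tau_m(x)\in\{0,1,\dots,k\}^{[m]}$, recording for each $i$ which $W_j$ contains $\alpha_{g_i}(x)$ (and $0$ if none does); two points with equal itinerary lie in exactly the same refined-cover elements, and each itinerary class sits inside at least one element (using $k\ge2$), so $N_m\le|\tau_m(X)|$ and the number of realized itineraries grows exponentially. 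I would then invoke a Sauer–Shelah–Karpovsky–Milman-type combinatorial lemma in the $(k+1)$-symbol alphabet: a family in $\{0,1,\dots,k\}^{[m]}$ of cardinality at least $b^m$ with $b>1$ shatters some $J_m$ with $|J_m|\ge\delta m$, where $\delta>0$ depends only on $b$ and $k$. A shattered $J_m$ realizes in particular every $[k]$-valued pattern, so by the dictionary $J_m$ is an independence set for $(W_1,\dots,W_k)$, hence for $(U_1,\dots,U_k)$. As $|J_m|\to\infty$, the product neighborhood has arbitrarily large finite independence sets, so $(x_1,\dots,x_k)\in IN_k(X,G)$.

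I expect the combinatorial extraction in the converse to be the main obstacle. The delicate point is that positive sequence entropy only guarantees exponential growth with some base $b>1$, which a priori may be far below $k-1$; one must therefore use the sharp $q$-ary Sauer–Shelah estimate, in which the number of non-shattering families of size below $\delta m$ grows like $\big(2^{H(\delta)}(q-1)^{\delta}\big)^m$ with base tending to $1$ as $\delta\to0$, so that any fixed $b>1$ still forces a shattered set of positive density. A secondary nuisance is the extra ``outside all $W_j$'' symbol in the itinerary alphabet, which I resolve exactly by working with $q=k+1$: full shattering of $J_m$ a fortiori produces every $[k]$-valued pattern and hence an honest independence set, at the only cost of using the lemma over one additional symbol.
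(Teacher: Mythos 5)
This theorem is not proved in the paper at all: it is quoted verbatim from Kerr and Li (Theorem 5.9 of \cite{Kerr_Li_Independence}), so the relevant comparison is with Kerr and Li's original argument. Your architecture reproduces theirs: the reduction to pairwise distinct coordinates, the counting bound $N\bigl(\bigvee_{g\in J}\alpha_{g^{-1}}(\mathcal{U})\bigr)\geq (k/(k-1))^{|J|}$ for an independence set $J$ together with blockwise concatenation of independence sets into a single sequence $\mathfrak{s}$, and, for the converse, the itinerary map into $\{0,1,\dots,k\}^{[m]}$ followed by a combinatorial extraction of a positive-density set on which all $[k]$-valued patterns occur. The reduction, the forward implication, and the bound $N_m\leq |\tau_m(X)|$ (each refined-cover element $\bigcap_i\alpha_{g_i^{-1}}(W_{c(i)}^c)$ contains an itinerary class iff $c$ avoids the itinerary coordinatewise, which is possible since $k\geq 2$) are all correct as written.

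The genuine gap is the combinatorial lemma you invoke in the converse, which is false as stated. For \emph{full} shattering over the alphabet $\{0,1,\dots,k\}$ — all $(k+1)^{|J|}$ patterns realized on $J$ — no bound of the form ``a family shattering no set of size $\delta m$ has cardinality at most $\beta(\delta)^m$ with $\beta(\delta)\to 1$ as $\delta\to 0$'' can hold when $k\geq 2$: the family $\{1,\dots,k\}^{[m]}$ has cardinality $k^m$, exponential with the fixed base $k$, yet fully shatters no nonempty set, since the symbol $0$ is never realized. The true Karpovsky--Milman threshold for full $q$-ary shattering of a $t$-set is of order $\sum_{i<t}\binom{m}{i}(q-1)^{m-i}$, whose dominant term is $(q-1)^m$; the base tends to $q-1=k$, not to $1$. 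Since positive sequence entropy supplies only growth $b^m$ for some $b>1$ that may be well below $k$, your extraction step fails precisely at the point you flagged as the main obstacle, and your proposed resolution (working with $q=k+1$ and full shattering) does not repair it. The correct tool demands less: one needs only that the restriction $S|_J$ contain every pattern over the nonzero sub-alphabet, and for that weaker conclusion the dichotomy is true — this is Lemma 5.1 of \cite{Kerr_Li_Independence}: for $k\geq 2$ and $\lambda>1$ there exists $c>0$ such that any $S\subseteq\{0,1,\dots,k\}^{[m]}$ with $|S|\geq\lambda^m$ admits $J\subseteq[m]$ with $|J|\geq cm$ and $S|_J\supseteq\{1,\dots,k\}^J$. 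Substituting this lemma for your full-shattering step, your dictionary (every $[k]$-valued itinerary pattern on $J$ is realized iff $J$ is an independence set for $(W_1,\dots,W_k)$, hence for $(U_1,\dots,U_k)$) completes the converse exactly as you outline, and the rest of the proposal stands.
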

	Huang and Ye in \cite{Huang_Ye_maximal_sequence_entropy} introduced \textbf{maximal pattern entropy tuples}. A tuple $(x_1,\dots,x_k)\in X^{k}\backslash \Delta_{k}(X)$ is called a \textbf{maximal pattern entropy k-tuple}, if whenever $U_1,\dots,U_n$ are pairwise disjoint closed neighborhoods of distinct points in the list $\{x_1,\dots,x_k\}$ we have that $h_{top}^{*}(\{U_1^{c},\dots,U_k^{c}\},G) > 0.$ By Theorem \ref{maximal pattern entropy sup def}, it follows immediately that maximal pattern entropy $k$-tuples are the same thing as sequence entropy $k$-tuples. Moreover, by Theorem \ref{sequence entropy tuples same as IN-tuples} it follows that maximal pattern entropy tuples are the same thing as non-diagonal $IN$-tuples.\\
	\indent Huang and Ye in \cite{Huang_Ye_maximal_sequence_entropy} investigated the relationship between maximal pattern entropy and \textbf{intrinsic $IN$-tuples}. A tuple $(x_1,\dots,x_k)$ is said to be an intrinsic $IN$-tuple if $(x_1,\dots,x_k)\in IN_k(X,G)$ and $x_i\neq x_j$ if $i\neq j.$ We will denote the set of intrinsic $IN$-tuples of length $k$ by $IN_k^{e}(X,G).$ They proved the following theorem that gives us a relation between maximal pattern entropy and intrinsic $IN$-tuples.  
	\begin{theorem}[Huang and Ye, Theorem A.3 \cite{Huang_Ye_maximal_sequence_entropy}]\label{local maximal pattern entropy}
		Let $(X,G,\alpha)$ be a t.d.s., then 
		\begin{align*}
			h_{top}^{*}(X,G) = \log(\sup\{k:IN_{k}^{e}(X,G)\neq \emptyset\}).
		\end{align*}
		Particularly, $h_{top}^{*}(X,G) = +\infty$ or $\log(k)$ for some $k\in \mathbb{N}.$ 
	\end{theorem}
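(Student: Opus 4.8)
Write $k^{*}=\sup\{k:IN_{k}^{e}(X,G)\neq\emptyset\}\in\mathbb{N}\cup\{+\infty\}$; the plan is to prove $h_{top}^{*}(X,G)=\log(k^{*})$ by a matching pair of inequalities. The first step is to record a convenient reformulation of $IN_{k}^{e}$: an intrinsic $IN$-tuple of length $k$ exists if and only if $X$ contains $k$ pairwise disjoint closed sets $A_{1},\dots,A_{k}$ admitting arbitrarily large finite independence sets. Indeed, if such $A_{j}$ exist then Theorem~\ref{properties of IN-tuples}(1) produces an $IN$-tuple $(x_{1},\dots,x_{k})$ with $x_{j}\in A_{j}$, and disjointness of the $A_{j}$ forces the $x_{j}$ to be distinct, so the tuple is intrinsic; conversely, given an intrinsic tuple $(x_{1},\dots,x_{k})$ I would take pairwise disjoint closed neighborhoods $A_{j}$ of the $x_{j}$, whose interiors have arbitrarily large independence sets by the definition of an $IN$-tuple. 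Thus it suffices to compute $h_{top}^{*}(X,G)$ in terms of such independence families.

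For the lower bound $h_{top}^{*}(X,G)\ge\log(k^{*})$, fix $k$ pairwise disjoint closed sets $A_{1},\dots,A_{k}$ with arbitrarily large independence sets. Since disjoint closed subsets of a compact metric space lie a positive distance apart, I would let $\delta>0$ be the minimal pairwise distance among the $A_{j}$ and choose any finite open cover $\mathcal{U}$ of mesh (maximal diameter of its members) less than $\delta$. Given an independence set $I=\{g_{1},\dots,g_{n}\}$ and a function $\omega\in[k]^{I}$, choose a point $y_{\omega}\in\bigcap_{g\in I}\alpha_{g^{-1}}(A_{\omega(g)})$. For $\omega\neq\omega'$ there is some $g_{i}$ with $\omega(g_{i})\neq\omega'(g_{i})$, so $\alpha_{g_{i}}(y_{\omega})$ and $\alpha_{g_{i}}(y_{\omega'})$ lie in different $A_{j}$'s and hence at distance at least $\delta$; as every member of $\mathcal{U}$ has diameter $<\delta$, no single member of $\bigvee_{i=1}^{n}\alpha_{g_{i}^{-1}}(\mathcal{U})$ can contain both $y_{\omega}$ and $y_{\omega'}$. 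Therefore $p_{X,G,\mathcal{U}}^{*}(n)\ge k^{n}$, and because independence sets are hereditary this holds for every $n$, giving $h_{top}^{*}(\mathcal{U},G)\ge\log(k)$ and so $h_{top}^{*}(X,G)\ge\log(k)$. Taking the supremum over all admissible $k$ yields $h_{top}^{*}(X,G)\ge\log(k^{*})$, read as $+\infty$ when $k^{*}=+\infty$.

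The upper bound $h_{top}^{*}(X,G)\le\log(k^{*})$ is the crux, and I would argue it contrapositively: assuming $h_{top}^{*}(X,G)>\log(k)$ I will show $IN_{k+1}^{e}(X,G)\neq\emptyset$. Since $h_{top}^{*}(X,G)=\sup_{\mathcal{U}}h_{top}^{*}(\mathcal{U},G)$, fix a finite open cover $\mathcal{U}=\{U_{1},\dots,U_{d}\}$ with $\limsup_{n}\tfrac{1}{n}\log p_{X,G,\mathcal{U}}^{*}(n)>\log(k)$. For a window $(g_{1},\dots,g_{n})$ realizing $p_{X,G,\mathcal{U}}^{*}(n)$, encode the nonempty atoms of the join as the pattern set $S=\{\sigma\in[d]^{[n]}:\bigcap_{i=1}^{n}\alpha_{g_{i}^{-1}}(U_{\sigma(i)})\neq\emptyset\}$, so that $|S|\ge N(\bigvee_{i=1}^{n}\alpha_{g_{i}^{-1}}(\mathcal{U}))$ grows strictly faster than $k^{n}$ along a sequence of $n$'s. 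I would then invoke the Sauer--Shelah--Karpovsky--Milman combinatorial independence lemma in the form used by Kerr and Li: super-$k^{n}$ growth of $|S|$ forces, for these $n$, a coordinate set $J\subseteq[n]$ with $|J|\to\infty$ and $k+1$ distinct symbols $s_{0},\dots,s_{k}\in[d]$ such that $S$ restricts onto the full product $\{s_{0},\dots,s_{k}\}^{J}$; equivalently $\{g_{i}:i\in J\}$ is an independence set for $(\overline{U_{s_{0}}},\dots,\overline{U_{s_{k}}})$. Pigeonholing over the finitely many $(k+1)$-subsets of $[d]$ fixes a single tuple of distinct cover members with arbitrarily large independence sets, and Theorem~\ref{properties of IN-tuples}(1) then yields an $IN$-tuple $(x_{0},\dots,x_{k})$ with $x_{j}\in\overline{U_{s_{j}}}$.

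The hard part, and the main obstacle, is that this tuple need not be intrinsic: distinct cover members can overlap, so a priori the $x_{j}$ might coincide and only a shorter distinct-point tuple would survive. To close the gap I would exploit the freedom in the choice of $\mathcal{U}$. Because refining a cover only increases the covering numbers of the joins, the supremum defining $h_{top}^{*}(X,G)$ may be computed over covers of arbitrarily small mesh; running the extraction above on such a fine cover should force the witnessing symbols $s_{0},\dots,s_{k}$ to correspond to sets with pairwise disjoint closures, since complexity exceeding $\log(k)$ cannot be supported by fewer than $k+1$ mutually separated cells, so the resulting $x_{j}$ are genuinely distinct and $IN_{k+1}^{e}(X,G)\neq\emptyset$, contradicting $k^{*}=k$. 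This bookkeeping---matching the combinatorial dimension $k+1$ to $k+1$ pairwise separated cells---is where the real work lies. Granting it, $h_{top}^{*}(X,G)\le\log(k^{*})$, and combined with the lower bound we obtain $h_{top}^{*}(X,G)=\log(k^{*})$. Finally, since $k^{*}\in\mathbb{N}\cup\{+\infty\}$, the value $\log(k^{*})$ is either $\log(k)$ for some $k\in\mathbb{N}$ or $+\infty$, which is the concluding assertion.
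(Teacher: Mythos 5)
This statement is quoted in the paper as Theorem A.3 of Huang and Ye \cite{Huang_Ye_maximal_sequence_entropy}; the paper itself gives no proof, so your attempt can only be measured against the cited source. Your lower bound is correct and is essentially the standard argument: from an intrinsic $IN$-tuple you pass to pairwise disjoint closed sets $A_{1},\dots,A_{k}$ a distance $\delta$ apart, take a cover of mesh less than $\delta$, and the $k^{n}$ points $y_{\omega}$ coming from an independence set of size $n$ must lie in distinct members of the join, giving $p_{X,G,\mathcal{U}}^{*}(n)\geq k^{n}$ and hence $h_{top}^{*}(X,G)\geq \log(k^{*})$. The reformulation of $IN_{k}^{e}$ via Theorem \ref{properties of IN-tuples}(1) is also fine.

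The upper bound, however, has a genuine gap exactly where you flag it, and the proposed repair does not work. After the Karpovsky--Milman/Sauer--Shelah extraction and pigeonholing you obtain $k+1$ \emph{distinct indices} $s_{0},\dots,s_{k}$, but nothing separates the sets $\overline{U_{s_{0}}},\dots,\overline{U_{s_{k}}}$: members of any open cover of $X$ must overlap, no matter how small the mesh, so refining the cover cannot force disjoint closures. Worse, the extracted independence can be entirely spurious: if the $k+1$ selected members share a common point $z$ that is fixed by the action (or more generally has a very tame orbit), then every finite subset of $G$ is an independence set for $(U_{s_{0}},\dots,U_{s_{k}})$ witnessed by $z$ alone, and Theorem \ref{properties of IN-tuples}(1) then returns the fully diagonal tuple $(z,\dots,z)$, which says nothing about $IN_{k+1}^{e}$. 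The assertion that ``complexity exceeding $\log(k)$ cannot be supported by fewer than $k+1$ mutually separated cells'' is precisely the content of the theorem, not a bookkeeping step. The mechanism that actually enforces distinctness in the literature is quantitative: any open cover $\mathcal{V}$ of cardinality $m$ satisfies $N\bigl(\bigvee_{i=1}^{n}\alpha_{g_{i}^{-1}}(\mathcal{V})\bigr)\leq m^{n}$, hence $h_{top}^{*}(\mathcal{V},G)\leq \log(m)$; Huang and Ye run a Blanchard-style shrinking argument on tuples of closed sets (compare the sequence-entropy-tuple machinery and Theorem 5.9 of \cite{Kerr_Li_Independence}, cited here as Theorem \ref{sequence entropy tuples same as IN-tuples}) in which a coalesced pair of coordinates would let one replace the cover by one of cardinality $k$, contradicting $h_{top}^{*}(\mathcal{U},G)>\log(k)$. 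Without an argument of this kind your contrapositive direction does not close, so the proposal proves only the inequality $h_{top}^{*}(X,G)\geq\log(k^{*})$.
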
  
	
	\subsection{co-Induced Systems from Subgroup Actions }\label{co-induction prelim section} 	Let $(X,H,\alpha)$ be a t.d.s., $\rho$ a metric on $X$ that generates the topology of $X$, and $H\leq G$ where $G$ is a group. Define the space 
	\begin{align*}
		X_{H,\alpha}^{G} := \{f\in X^{G} : f(hg) = \alpha_h(f(g)), \forall h\in H, \forall g\in G\}.
	\end{align*}
	We first need to show that $X_{H,\alpha}^{G}$ is a compact metric space. To do so, we consider $X^G$. Index $G$ by $g_1 = e, g_2,g_3,\dots,$ and define 
	\begin{align*}
		\rho_{G}(f_1,f_2) = \sum_{i=1}^{\infty}\frac{\rho(f_1(g_i),f_2(g_i))}{2^{i}}.
	\end{align*}
	We have that $\rho_{G}$ is well-defined and induces the product topology on $X^{G}.$ Since $X^{G}$ is the Cartesian product of compact spaces, by Tychonoff's Theorem it follows that $X^{G}$ is a compact metric space. The subspace $X_{H,\alpha}^{G}$ is a closed subset of $X^{G};$ therefore, $X_{H,\alpha}^{G}$ is a compact metric space. Let $G$ act on $X^{G}$ by $r_{g}: X^{G}\to X^{G}$ where $r_{g}$ is the right shift action i.e. $r_{g}(f(x)) = f(xg).$ The subspace $X_{H,\alpha}^{G}$ is right shift invariant; thus, the tuple $(X_{H,\alpha}^{G},G,r)$ is a t.d.s.\\
	\indent As $H\leq G$ we can decompose $G$ into a right coset representation with respect to $H$, and let $s:H\backslash G \to G$ be a fixed section of $H\backslash G$ where $s(H) = e.$ We define the space
	\begin{align*}
		X_{H}^{G} = \prod_{\theta\in H\backslash G}X,
	\end{align*}
	which is a compact metric space since $H\backslash G$ is countable. Define 
	\begin{align*}
		\psi_s: &X_{H}^{G} \to X_{H,\alpha}^{G},
	\end{align*}
	by $\psi_s(f)(g) = \alpha_{gs(Hg)^{-1}}f(Hg).$ As $gs(Hg)^{-1}\in H$ and $\psi_{s}(f)\in X_{H,\alpha}^{G}$ our map is well-defined. The function $\psi_{s}$ is bijective and continuous; therefore, $\psi_{s}$ is a homeomorphism since we are working with compact metric spaces. Let $g_0,g\in G$ and $f\in X_{H}^{G},$ it follows that 
	\begin{align*}
		r_{g_0}\psi_{s}(f)(g) &= \psi_{s}(f)(gg_0)\\
		&= \alpha_{gg_{0}s(Hgg_0)^{-1}}f(Hgg_0).
	\end{align*}
	Since $s(Hg)g_0s(Hgg_0)^{-1}\in H$ it follows that 
	\begin{align*}
		r_{g_0}\psi_{s}(f)(g) &= \alpha_{gg_{0}s(Hgg_0)^{-1}}f(Hgg_0)\\
		&= \alpha_{gs(Hg)^{-1}}\alpha_{s(Hg)g_0s(Hgg_0)^{-1}}f(Hgg_0).
	\end{align*}
	Define the continuous action $\alpha^{H\backslash G,s}:G\times X_{H}^{G}\to X_{H}^{G}$ by $(g_0,f)\mapsto \alpha_{g_0}^{H\backslash G,s}(f)$ where $$\alpha_{g_0}^{H\backslash G,s}(f)(Hg) = \alpha_{s(Hg)g_0s(Hgg_0)^{-1}}f(Hgg_0).$$ With this action, we have that $(X_{H}^G,G,\alpha^{H\backslash G,s})$ is a t.d.s. and by the above work, we see that 
	\begin{align*}
		r_{g_0}\psi_{s}(f)(g) &= \alpha_{gs(Hg)^{-1}}\alpha_{g_0}^{H\backslash G,s}(f)(Hg)\\
		&= \psi_{s}(\alpha_{g_0}^{H\backslash G,s}f)(g).
	\end{align*} 
	As $\psi_{s}$ is a homeomorphism, $(X_{H,\alpha}^{G},G,r)\cong (X_{H}^{G},G,\alpha^{H\backslash G,s})$ as t.d.s. for any choice of section $s:H\backslash G \to G$ with $s(H) = e.$ For this reason, our system $(X_{H}^{G},G,\alpha^{H\backslash G,s})$ is independent of our choice of section $s$ and we write it as $(X_{H}^{G},G,\alpha^{H\backslash G}).$ We call the t.d.s. $(X_{H}^{G},G,\alpha^{H\backslash G})$ the \textbf{co-induced system} of $(X,H,\alpha).$ For each $\theta\in H\backslash G,$ define the projections $\pi_{\theta}:X_{H}^{G}\to X$ by $f\mapsto f(\theta)$. The projection $\pi_{H}: X_{H}^{G}\to X$ is important to us because it is an $H$-factor map i.e. $\pi_{H}(\alpha_{h}^{H\backslash G}f) = \alpha_{h}\pi_{H}(f)$ for all $f\in X_{H}^{G}$ and $h\in H.$ \\
	\indent Let $G$ be a group and $X$ a compact metric space. If $H = \{e\},$ then $$X_{\{e\}}^{G} = \prod_{G}X$$ and for $g\in G,$ 
	\begin{align*}
		\alpha_{g}^{\{e\}\backslash G}(f)(h) = f(hg) = r_{g}(f)(h).
	\end{align*}
	Therefore, the co-induced system is the t.d.s. $(X^{G},G,r)$ with the right-shift action $r$. This t.d.s. is called a \textbf{Bernoulli action}. This gives us that co-induced systems are generalizations of Bernoulli actions.  \\
	\indent The following is a result we will use relating our group structures to weak-mixing for co-induced systems.
	\begin{proposition}[Dooley and Zhang, Lemma 7.1(2) \cite{Dooley_Zhang_co_induction}]\label{co-induction weak mixing relation}
		Let $G$ be a group and $H$ a subgroup of $G$ such that $KHK^{-1}\subsetneqq G$ for each $K\in \mathscr{F}(G).$ If $(X,H,\alpha)$ is a t.d.s. and $(X_{H}^{G},G,\alpha^{H\backslash G})$ is the co-induced system, then $(X_{H}^{G},G,\alpha^{H\backslash G})$ is weak-mixing.
	\end{proposition}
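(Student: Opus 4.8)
The plan is to exploit the conjugacy $(X_{H,\alpha}^{G},G,r)\cong(X_{H}^{G},G,\alpha^{H\backslash G})$ established in Section~\ref{co-induction prelim section} and to verify weak-mixing in the shift model $(X_{H,\alpha}^{G},G,r)$, where the action is simply $r_{g}(f)(x)=f(xg)$; since weak-mixing is a conjugacy invariant this suffices. The structural fact I would lean on is that an element $f\in X_{H,\alpha}^{G}$ is determined freely and independently by its values $f(s(\theta))$ on a fixed set of coset representatives $\{s(\theta):\theta\in H\backslash G\}$, the remaining values being forced by the equivariance $f(hg)=\alpha_{h}(f(g))$. Consequently the basic open sets are cylinders constraining only finitely many cosets, and two cylinders whose constraints involve disjoint sets of cosets always meet.

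Given four non-empty open sets $U_1,U_2,V_1,V_2$, I would first shrink each to a basic cylinder and pass to a common finite coordinate support $F\subseteq H\backslash G$, namely the union of the four individual supports, so that each of $U_1,U_2,V_1,V_2$ constrains only the cosets in $F$. A direct computation with the shift shows that $r_{g}V_i$ is again a cylinder, now supported on the translated set $Fg^{-1}=\{\theta g^{-1}:\theta\in F\}$: a constraint $f(d)\in O$ in $V_i$ becomes the constraint $p(dg^{-1})\in O$ in $r_{g}V_i$. The key reduction is then that if I can find a single $g\in G$ with $F\cap Fg^{-1}=\emptyset$, then for each $i$ the supports of $U_i$ and of $r_{g}V_i$ are disjoint, so by the free/independent structure on representatives I may build a point $p^{(i)}$ satisfying the $U_i$-constraints on $F$ and the $r_{g}V_i$-constraints on $Fg^{-1}$ simultaneously, witnessing $U_i\cap r_{g}V_i\neq\emptyset$. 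The two intersections use the same $g$ but may use different witnesses $p^{(1)},p^{(2)}$, which is exactly what the definition of weak-mixing requires.

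It remains to produce $g$ with $F\cap Fg^{-1}=\emptyset$, and this is where the hypothesis enters. Writing $S$ for a set of representatives of $F$, I would observe that $F\cap Fg^{-1}\neq\emptyset$ means $Hag=Hb$ for some $a,b\in S$, equivalently $agb^{-1}\in H$, i.e. $g\in a^{-1}Hb$. Hence the bad set of $g$ is $\bigcup_{a,b\in S}a^{-1}Hb = KHK^{-1}$ with $K=S^{-1}\in\mathscr{F}(G)$. By the hypothesis $KHK^{-1}\subsetneqq G$, so any $g\in G\setminus KHK^{-1}$ satisfies $F\cap Fg^{-1}=\emptyset$, completing the argument. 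The main obstacle, and the step deserving the most care, is the bookkeeping that converts the support-disjointness condition $F\cap Fg^{-1}=\emptyset$ into the precise finite double-coset set $KHK^{-1}$, together with verifying that equivariance lets one realize disjoint-support constraints by an actual point: each constraint on a coset element $dg^{-1}$ must be pulled back to its representative $s(H dg^{-1})$ through a homeomorphism $\alpha_{h^{-1}}$, which preserves non-emptiness of the open target and so leaves each coordinate freely choosable.
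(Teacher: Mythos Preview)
The paper does not supply its own proof of this proposition; it is quoted from Dooley and Zhang \cite{Dooley_Zhang_co_induction} with attribution and used as a black box. Your argument is correct and is essentially the natural one: reduce to cylinder sets in the shift model $(X_{H,\alpha}^{G},G,r)$, observe that $r_g$ moves the coset-support of a cylinder from $F$ to $Fg^{-1}$, and then use the hypothesis to produce $g\notin S^{-1}HS=KHK^{-1}$ (with $K=S^{-1}$) so that $F\cap Fg^{-1}=\emptyset$; your handling of the equivariance constraint when realizing disjoint-support cylinders by an actual point is also accurate.
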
 
	\section{Maximal Pattern Entropy of co-Induced Systems}\label{maximal pattern entropy section} 
	In this section, we will discuss the maximal pattern entropy of co-induced systems. If $(X,H,\alpha)$ is a t.d.s. and $H\leq G,$ we have the co-induced system $(X_{H}^{G},G,\alpha^{H\backslash G}).$ If $\mathcal{U}$ is an open covering of $X$, then $\pi_{H}^{-1}(\mathcal{U})$ is an open covering of $X_{H}^{G}.$ Since $\pi_{H}$ is an $H$-factor map, we see for $(h_1,\dots,h_n)\in H^{n}$ that
	\begin{align*}
		N\left(\bigvee_{i=1}^{n}\alpha^{H\backslash G}_{h_i^{-1}}(\pi_{H}^{-1}(\mathcal{U}))\right) = N\left(\bigvee_{i=1}^{n}\alpha_{h_i^{-1}}(\mathcal{U})\right).
	\end{align*}
	Therefore,
	\begin{align*}
		p_{X_{H}^{G},G,\pi_{H}^{-1}(\mathcal{U})}^{*}(n) \geq p_{X,H,\mathcal{U}}^{*}(n),
	\end{align*}
	and consequently, it follows that $h_{top}^{*}(X_{H}^{G},G)\geq h_{top}^{*}(X,H).$ 
	\subsection{\large{Maximal Pattern Entropy of co-Induced Systems: $[G:H] < +\infty.$}}\label{maximal pattern entropy finite index subsection}
	We will open this sub-section by proving a result about $IN$-tuples of subgroup actions given that subgroup has finite index. 
	\begin{proposition}\label{finite index subgroup maximal pattern entropy IN-tuple lemma}
		Let $(X,G,\alpha)$ be a t.d.s. and $H\leq G$ such that $[G:H] < + \infty,$ then 
		\begin{align*}
			IN_{k}(X,G) = IN_{k}(X,H).
		\end{align*}
	\end{proposition}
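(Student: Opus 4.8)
The plan is to prove the two inclusions separately. The inclusion $IN_k(X,H) \subseteq IN_k(X,G)$ is immediate: if $(x_1,\dots,x_k) \in IN_k(X,H)$ and $U_1\times\dots\times U_k$ is a product neighborhood, then $(U_1,\dots,U_k)$ has arbitrarily large finite independence sets contained in $H$; since $H\subseteq G$ and the defining intersections $\bigcap_{g\in F}\alpha_{g^{-1}}(U_{\omega(g)})$ reference only the restricted action when $F\subseteq H$, every such set is also an independence set for the $G$-action. Hence $(x_1,\dots,x_k)\in IN_k(X,G)$.

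The substance is the reverse inclusion $IN_k(X,G)\subseteq IN_k(X,H)$. Fix $(x_1,\dots,x_k)\in IN_k(X,G)$ and a product neighborhood $U_1\times\dots\times U_k$. Set $m = [G:H]$ and decompose $G = \bigsqcup_{j=1}^{m} Hc_j$ into right cosets with representatives $c_1,\dots,c_m$. For each $N$, pick a $G$-independence set $I$ for $(U_1,\dots,U_k)$ with $|I|\geq mN$. By the pigeonhole principle some coset satisfies $|I\cap Hc_j|\geq N$; write $J = I\cap Hc_j$ and translate it into $H$ by setting $J' = Jc_j^{-1}\subseteq H$. I would then verify that $J'$ is an independence set for the \emph{same} tuple $(U_1,\dots,U_k)$ under the $H$-action. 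Given a finite $F'\subseteq J'$ and $\omega': F'\to[k]$, put $F = F'c_j\subseteq J$ and $\omega(g)=\omega'(gc_j^{-1})$; writing each $g\in F$ as $g=g'c_j$ with $g'\in F'$, we have $\alpha_{g^{-1}}=\alpha_{c_j^{-1}}\alpha_{g'^{-1}}$, so that
$$\bigcap_{g\in F}\alpha_{g^{-1}}(U_{\omega(g)}) = \alpha_{c_j^{-1}}\left(\bigcap_{g'\in F'}\alpha_{g'^{-1}}(U_{\omega'(g')})\right).$$
Because $J$ is a $G$-independence set the left-hand side is non-empty, and since $\alpha_{c_j^{-1}}$ is a bijection the inner intersection on the right is non-empty as well. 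Thus $J'$ is an $H$-independence set of cardinality at least $N$; letting $N\to\infty$ shows $(U_1,\dots,U_k)$ has arbitrarily large finite independence sets in $H$ for every product neighborhood, whence $(x_1,\dots,x_k)\in IN_k(X,H)$.

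The hard part is precisely this translation step, and it is worth isolating why a naive approach fails: a $G$-independence set $I$ need not meet $H$ at all, so one cannot simply intersect $I$ with $H$. The finite-index hypothesis enters exactly here, through the pigeonhole count, which forces a definite positive fraction of $I$ into a single coset. Right-translating by the coset representative then does two things at once — it lands the set inside $H$ and, because the action is by homeomorphisms, it preserves every nonemptiness condition required for $H$-independence, all without altering the neighborhoods $U_i$. I expect the only points needing care in writing this out to be the bookkeeping of left-versus-right cosets (so that $Jc_j^{-1}\subseteq H$) and the observation that the representative $c_j$ may depend on $N$, which is harmless since we only need arbitrarily large independence sets rather than a single nested family.
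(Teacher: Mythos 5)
Your proof is correct and takes essentially the same route as the paper: both arguments apply the pigeonhole principle to the right-coset decomposition of a large $G$-independence set to find a single coset containing more than $N$ of its elements, then right-translate by the coset representative into $H$ and use $\alpha_{(hc_j)^{-1}} = \alpha_{c_j^{-1}}\alpha_{h^{-1}}$ to transfer the nonemptiness conditions. The only difference is notational --- the paper fixes a section $s\colon H\backslash G \to G$ and writes the pieces as $F_{\theta} = Fs(\theta)^{-1}\cap H$, whereas you intersect with the cosets $Hc_j$ directly --- and your closing remarks (that $I$ need not meet $H$, and that $c_j$ may vary with $N$) are accurate observations about why the translation step is the crux.
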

	\begin{proof}
		If $(x_1,\dots,x_k)\in IN_{k}(X,H)$ then $(x_1,\dots,x_k)\in IN_{k}(X,G),$ trivially. For the reverse direction, let $(x_1,\dots,x_k)\in IN_{k}(X,G)$ and $U_1\times\ldots\times U_k$ be a product neighborhood of $(x_1,\dots,x_k).$ Since $(x_1,\dots,x_k)$ is an $IN$-tuple, for any $n\in \mathbb{N}$ there exists $F\in \mathscr{F}(G)$ such that $|F| > n[G:H]$ and $F$ is an independence set for $(U_1,\dots,U_k).$ Let $H\backslash G$ be the set of right cosets of $H,$ $\pi:G\to H\backslash G$ the quotient map, and $s:H\backslash G\to G$ a fixed section of $H\backslash G.$ Define for each $\theta\in \pi(F)$  
		\begin{equation*}
			F_{\theta} = Fs(\theta)^{-1} \cap H,
		\end{equation*}
		then $$F = \bigsqcup_{\theta\in \pi(F)}F_{\theta}s(\theta).$$ Since $\pi(F)\in \mathscr{F}(H\backslash G),$ there exists $\theta_\circ\in \pi(F)$ such that $|F_{\theta_\circ}| = \max_{\theta\in \pi(F)}|F_{\theta}|$ and
		\begin{align*}
			|F| &= \sum_{\theta\in \pi(F)}|F_{\theta}|\\
			&\leq [G:H]|F_{\theta_\circ}|.
		\end{align*} 
		Therefore, $|F_{\theta_\circ}| > n$ and $F_{\theta_\circ}\in \mathscr{F}(H).$ It suffices to show that $F_{\theta_\circ}$ is an independence set for $(U_1,\dots,U_k).$ As $F_{\theta_\circ}s(\theta_\circ)\subseteq F$ by the independence of $F$ we know that $F_{\theta_\circ}s(\theta_\circ)$ is an independence set for $(U_1,\dots,U_k).$ Let $\omega\in [k]^{F_{\theta_\circ}},$ then there exists $\overline{\omega}\in [k]^{F_{\theta_\circ}s(\theta_\circ)}$ such that $\overline{\omega}(hs(\theta_\circ)) = \omega(h)$ for all $h\in F_{\theta_\circ}.$ By independence 
		\begin{align*}
			\emptyset \neq \bigcap_{h\in F_{\theta_\circ}}\alpha_{(hs(\theta_\circ))^{-1}}(U_{\overline{\omega}(hs(\theta_\circ))}) &= \alpha_{s(\theta_\circ)^{-1}}\left( \bigcap_{h\in F_{\theta_\circ}}\alpha_{h^{-1}}(U_{\omega(h)})\right),
		\end{align*}
		and $F_{\theta_\circ}$ is an independence set for $(U_1,\dots,U_k)$. Therefore, $(x_1,\dots,x_k)\in IN_{k}(X,H)$ and $IN_{k}(X,G) = IN_{k}(X,H)$. 
	\end{proof}
	Under the assumptions of Proposition \ref{finite index subgroup maximal pattern entropy IN-tuple lemma}, it follows that $IN_{k}^{e}(X,H) = IN_{k}^{e}(X,G).$ Using this fact and Theorem \ref{local maximal pattern entropy}, we obtain another proof for Theorem \ref{finite index subgroup maximal pattern entropy}. The next result is a lemma involving $IN$-tuples of co-induced systems. 
	\begin{lemma}\label{IN-tuples in the finite index case}
		Let $H$ be a subgroup of $G$ such that $[G:H] < + \infty$. If $(X,H,\alpha)$ is a t.d.s. and $(X_{H}^{G},G,\alpha^{H\backslash G})$ is the co-induced system, then 
		\begin{align*}
			IN_{k}(X_{H}^{G},G)\subseteq \prod_{\theta\in H\backslash G}IN_{k}(X,H).
		\end{align*}
	\end{lemma}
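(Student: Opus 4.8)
The plan is to read the inclusion coordinatewise and reduce each coordinate to the identity coset. First I would unwind the product notation: identifying $\left(X_{H}^{G}\right)^{k}$ with $\prod_{\theta\in H\backslash G}X^{k}$ via the reindexing $(f_1,\dots,f_k)\leftrightarrow\big(\theta\mapsto(f_1(\theta),\dots,f_k(\theta))\big)$, the claimed inclusion says precisely that whenever $(f_1,\dots,f_k)\in IN_k(X_{H}^{G},G)$, the tuple $(f_1(\theta),\dots,f_k(\theta))$ lies in $IN_k(X,H)$ for every coset $\theta\in H\backslash G$. So I would fix an arbitrary $(f_1,\dots,f_k)\in IN_k(X_{H}^{G},G)$ and a coset $\theta$, and aim to show $(f_1(\theta),\dots,f_k(\theta))\in IN_k(X,H)$.

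The one coset I can handle directly is the identity coset $\theta=H$, because $\pi_{H}\colon(X_{H}^{G},H,\alpha^{H\backslash G}\rvert_{H})\to(X,H,\alpha)$ is a surjective $H$-factor map. The strategy for a general $\theta$ is therefore to transport $\theta$ onto the identity coset using the $G$-action and then apply $\pi_{H}$. Concretely, I would proceed in three steps. First, by Theorem \ref{properties of IN-tuples}(3), $IN_k(X_{H}^{G},G)$ is $G$-invariant under the product action, so applying $\alpha^{H\backslash G}_{s(\theta)}$ in each coordinate gives $\big(\alpha^{H\backslash G}_{s(\theta)}f_1,\dots,\alpha^{H\backslash G}_{s(\theta)}f_k\big)\in IN_k(X_{H}^{G},G)$. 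Second, since $[G:H]<+\infty$, Proposition \ref{finite index subgroup maximal pattern entropy IN-tuple lemma} applied to the $G$-system $X_{H}^{G}$ yields $IN_k(X_{H}^{G},G)=IN_k(X_{H}^{G},H)$, so this transported tuple in fact lies in $IN_k(X_{H}^{G},H)$. Third, by Theorem \ref{properties of IN-tuples}(4) applied to the $H$-factor map $\pi_{H}$, the image under $\pi_{H}\times\cdots\times\pi_{H}$ of any element of $IN_k(X_{H}^{G},H)$ lies in $IN_k(X,H)$.

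The key computation, and the conceptual heart of the argument, is that pushing the transported tuple through $\pi_{H}$ recovers exactly the $\theta$-coordinate. Using $s(H)=e$ together with the defining formula $\alpha_{g_0}^{H\backslash G}(f)(Hg)=\alpha_{s(Hg)g_0 s(Hgg_0)^{-1}}f(Hgg_0)$, evaluated at $g=e$ with $g_0=s(\theta)$ and $Hs(\theta)=\theta$, I expect $\pi_{H}\big(\alpha^{H\backslash G}_{s(\theta)}f\big)=\alpha_{s(H)s(\theta)s(Hs(\theta))^{-1}}f(\theta)=\alpha_{s(\theta)s(\theta)^{-1}}f(\theta)=f(\theta)$, the twisting $H$-element collapsing to the identity precisely because $s(\theta)$ is the section value at $\theta$. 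Combining the three steps then gives $(f_1(\theta),\dots,f_k(\theta))\in IN_k(X,H)$, and since $\theta$ was arbitrary the inclusion follows.

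I would flag that the main obstacle is not any single estimate but getting the bookkeeping of the section $s$ and the cocycle twist right, so that the right shift by $s(\theta)$ lands coset $\theta$ on the identity coset with a trivial twist; choosing $g_0=s(\theta)$ rather than $s(\theta)^{-1}$ is what makes the twist vanish. The finite-index hypothesis enters only through Proposition \ref{finite index subgroup maximal pattern entropy IN-tuple lemma}, which is exactly the bridge letting me replace the $G$-action $IN$-tuples by $H$-action $IN$-tuples so that the $H$-factor map $\pi_{H}$ can be applied.
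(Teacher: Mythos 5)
Your proposal is correct and follows essentially the same route as the paper's proof: translate the tuple by $\alpha^{H\backslash G}_{s(\theta)}$ using $G$-invariance of $IN_k$, invoke Proposition \ref{finite index subgroup maximal pattern entropy IN-tuple lemma} to pass from $IN_k(X_H^G,G)$ to $IN_k(X_H^G,H)$, and push forward through the $H$-factor map $\pi_H$, with the key identity $\bigl(\alpha^{H\backslash G}_{s(\theta)}f\bigr)(H)=f(\theta)$. Your explicit verification that the cocycle twist collapses is a detail the paper states without computation, but the argument is the same.
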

	\begin{proof}
		Let $(f_1,\dots, f_k)\in IN_{k}(X_{H}^{G},G).$ $\pi_{H}: X_{H}^{G}\to X$ is an $H$-factor map, by Proposition \ref{finite index subgroup maximal pattern entropy IN-tuple lemma} and Theorem \ref{properties of IN-tuples}(4)
		\begin{align*}
			(\pi_{H}\times\ldots\times\pi_{H})(IN_{k}(X_{H}^{G},G)) &=(\pi_{H}\times\ldots\times\pi_{H})(IN_{k}(X_{H}^{G},H))\\ 
			&= IN_{k}(X,H).
		\end{align*}
		Therefore, $(f_1(H),\dots,f_k(H))\in IN_{k}(X,H).$ By Theorem \ref{properties of IN-tuples}(3) $IN$-tuples are invariant under the product action of $G$ on $(X_{H}^{G})^k$; therefore, $$\left(\alpha_{s(\theta)}^{H\backslash G}f_1,\dots,\alpha_{s(\theta)}^{H\backslash G}f_k\right)\in IN_{k}(X_H^{G},G)$$ for any $\theta \in H\backslash G.$ Since $\left(\alpha_{s(\theta)}^{H\backslash G}f_{i}\right)(H) = f_{i}(\theta)$, then $(f_1(\theta),\dots,f_{k}(\theta))\in IN_{k}(X,H)$ and $(f_1,\dots,f_k)\in \prod_{\theta\in H\backslash G}IN_{k}(X,H)$ as desired.
	\end{proof} 
	By Lemma \ref{IN-tuples in the finite index case}, we have the following theorem. 
	\begin{theorem}\label{null iff null in finite index case}
		Let $H$ be a subgroup of $G$, $(X,H,\alpha)$ be a t.d.s. and $(X_{H}^{G},G,\alpha^{H\backslash G})$ be the co-induced system. If $[G:H] < +\infty$ then $(X_{H}^{G},G,\alpha^{H\backslash G})$ is null if and only if $(X,H,\alpha)$ is null. 
	\end{theorem}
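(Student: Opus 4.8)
The plan is to reduce the nullity of both systems to the single combinatorial criterion in Theorem \ref{properties of IN-tuples}(2): a t.d.s. is null precisely when it has no non-diagonal $IN$-pair, i.e. when $IN_2 \subseteq \Delta_2$. With this reformulation the theorem becomes the equivalence $IN_2(X_H^G, G) \subseteq \Delta_2(X_H^G)$ if and only if $IN_2(X,H) \subseteq \Delta_2(X)$, and I would supply each implication from a result already established, so that no new machinery is needed.

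For the forward direction (co-induced null implies $(X,H,\alpha)$ null) I would invoke the inequality $h_{top}^{*}(X_H^G, G) \geq h_{top}^{*}(X,H)$ recorded at the start of this section, which comes from pulling open coverings back along the $H$-factor map $\pi_H$. If the co-induced system is null its maximal pattern entropy vanishes, forcing $h_{top}^{*}(X,H) = 0$ and hence $(X,H,\alpha)$ null. Alternatively one can argue entirely with $IN$-tuples: Proposition \ref{finite index subgroup maximal pattern entropy IN-tuple lemma} applied to the co-induced system gives $IN_2(X_H^G, H) = IN_2(X_H^G, G)$, and then Theorem \ref{properties of IN-tuples}(4) applied to $\pi_H$ yields $IN_2(X,H) = (\pi_H \times \pi_H)(IN_2(X_H^G, G))$, whose right-hand side is forced onto the diagonal when the co-induced system is null.

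For the reverse direction, the substantive one, I would assume $(X,H,\alpha)$ is null, so $IN_2(X,H) \subseteq \Delta_2(X)$, and take an arbitrary pair $(f_1, f_2) \in IN_2(X_H^G, G)$. Lemma \ref{IN-tuples in the finite index case} then places $(f_1, f_2)$ in $\prod_{\theta \in H\backslash G} IN_2(X,H)$, which is to say $(f_1(\theta), f_2(\theta)) \in IN_2(X,H)$ for every coset $\theta$. The nullity hypothesis pins each coordinate pair on the diagonal, so $f_1(\theta) = f_2(\theta)$ for all $\theta \in H\backslash G$; since a point of $X_H^G$ is determined by its values on all cosets, this gives $f_1 = f_2$. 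Hence $IN_2(X_H^G, G) \subseteq \Delta_2(X_H^G)$ and the co-induced system is null.

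The genuine work is concentrated in Lemma \ref{IN-tuples in the finite index case} (and, through it, Proposition \ref{finite index subgroup maximal pattern entropy IN-tuple lemma}), so once that coordinatewise containment is available there is no serious obstacle. The only points that demand care are reading the product $\prod_{\theta \in H\backslash G} IN_2(X,H)$ correctly as the assertion that every coordinate pair is an $H$-$IN$-pair, and noting that the finite-index hypothesis $[G:H] < +\infty$ is exactly what licenses both the Lemma and the Proposition to apply.
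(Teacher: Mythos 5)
Your proposal is correct and follows essentially the same route as the paper: the forward direction via the inequality $h_{top}^{*}(X_{H}^{G},G)\geq h_{top}^{*}(X,H)$, and the reverse direction via Lemma \ref{IN-tuples in the finite index case} together with the $IN$-pair characterization of nullness from Theorem \ref{properties of IN-tuples}(2). The only difference is cosmetic --- you phrase the reverse direction as a direct containment $IN_{2}(X_{H}^{G},G)\subseteq \Delta_{2}(X_{H}^{G})$ where the paper argues by contradiction, and these are the same argument.
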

	\begin{proof}
		We showed at the beginning of Section \ref{maximal pattern entropy section} that $h_{top}^*(X_{H}^{G},G)\geq h_{top}^{*}(X,H)$. Thus, if $h_{top}^{*}(X_{H}^{G},G) = 0$ then $h_{top}^{*}(X,H) = 0$. Conversely, if $h_{top}^{*}(X,H) = 0$ then by Theorem \ref{maximal pattern entropy sup def} and Theorem \ref{local maximal pattern entropy} the set $IN_{2}(X,H)\backslash \Delta_2(X) = \emptyset.$ If $h_{top}^{*}(X_{H}^{G},G) > 0,$ by Theorem \ref{properties of IN-tuples}(2) we have that $IN_{2}(X_{H}^{G},G)\backslash \Delta_{2}(X_H^{G}) \neq \emptyset.$ However, by Lemma \ref{IN-tuples in the finite index case} we have that if $(f_1,f_2)\in IN_{2}(X_{H}^{G},G)\backslash \Delta_2(X_H^{G})$ then $(f_1(\theta),f_2(\theta))\in IN_2(X,H)$ for all $\theta\in H\backslash G.$ As $f_1\neq f_2,$ there exists $\theta\in H\backslash G$ such that $f_1(\theta)\neq f_2(\theta)$ and $(f_1(\theta),f_2(\theta))\in IN_2(X,H)\backslash \Delta_2(X).$ This leads us to a contradiction and we must have that $h_{top}^{*}(X_{H}^G,G) = 0.$ 
	\end{proof}
	The condition of $[G:H] < +\infty$ is necessary for Lemma \ref{IN-tuples in the finite index case} and Theorem \ref{null iff null in finite index case} to be true. To see this, let $(\{0,1\},\mathbb{Z},T)$ be a t.d.s. where $T$ is the trivial action i.e. $T^{n}(x) = x$ for all $n\in \mathbb{Z}$. This t.d.s. is null and by Theorem \ref{maximal pattern entropy sup def} and Theorem \ref{local maximal pattern entropy} it follows that $IN_{k}^{e}(\{0,1\},\mathbb{Z}) = \emptyset$ for $k\geq 2.$ We identify $\mathbb{Z}$ as the subgroup $\mathbb{Z}\times \{0\}$ of $\mathbb{Z}\times \mathbb{Z}$. For all $K\in \mathscr{F}(\mathbb{Z}\times\mathbb{Z}),$ $K\mathbb{Z}K^{-1}\subsetneqq \mathbb{Z}\times \mathbb{Z}$ because $[\mathbb{Z}\times\mathbb{Z}:\mathbb{Z}] = +\infty.$ Thus, by Proposition \ref{co-induction weak mixing relation} the co-induced system $\left(\{0,1\}_{\mathbb{Z}}^{\mathbb{Z}\times\mathbb{Z}},\mathbb{Z}\times\mathbb{Z},\alpha^{\mathbb{Z}\backslash \mathbb{Z}\times\mathbb{Z}}\right)$ is weak-mixing. Since $\mathbb{Z}\times\mathbb{Z}$ is abelian by Theorem \ref{upse equivalence to weakmixing} it follows that 
	\begin{align*}
		IN_2\left(\{0,1\}_{\mathbb{Z}}^{\mathbb{Z}\times\mathbb{Z}},\mathbb{Z}\times\mathbb{Z}\right)\backslash \Delta_2\left(\{0,1\}_{\mathbb{Z}}^{\mathbb{Z}\times\mathbb{Z}}\right) = \left(\{0,1\}_{\mathbb{Z}}^{\mathbb{Z}\times\mathbb{Z}}\right)^{2}\backslash \Delta_2\left(\{0,1\}_{\mathbb{Z}}^{\mathbb{Z}\times\mathbb{Z}}\right)
	\end{align*}
	and $h_{top}^{*}\left(\{0,1\}_{\mathbb{Z}}^{\mathbb{Z}\times\mathbb{Z}},\mathbb{Z}\times\mathbb{Z}\right) > 0$ by Theorem \ref{local maximal pattern entropy}.
	We'll see that whenever $[G:H] = +\infty$ and $X$ is not the trivial system i.e. $X \neq \{\ast\}$, then $h_{top}^{*}(X_{H}^{G},G) = +\infty$ (see Theorem \ref{infinite index subgroup co-induction has infinite maximal pattern entropy}). \\
	\indent When $[G:H] = r < +\infty,$ it is natural to compare the maximal pattern entropy of $(X_{H}^{G},G,\alpha^{H\backslash G})$ to the maximal pattern entropy of $(X^r,H,\alpha^{r})$ where $\alpha^{r}$ is the product action of $H$ on $X^{r}$ i.e. $\alpha^{r}_{h}(x_1,\dots,x_r) = (\alpha_{h}(x_1),\dots,\alpha_{h}(x_r)).$ If $H$ is a normal subgroup of $G$ and $s: H\backslash G \to G$ is a fixed section of $H\backslash G$ with $s(H) = e,$ then for $h\in H$ and $f\in X_{H}^{G}$
	\begin{align}\label{normal subgroups fix cosets}
		\alpha_{h}^{H\backslash G}(f)(\theta) = \alpha_{s(\theta)hs(\theta)^{-1}}f(\theta)
	\end{align}
	for every $\theta\in H\backslash G.$ If $H$ is not normal, we can reduce the problem to the normal core $$N_{H} = \bigcap_{g\in G}g^{-1}Hg$$ of $H$. Since $[G:H] = r,$ it follows that $[G:N_{H}] \leq r!$ by considering $N_{H}$ as the kernel of the action of $G$ acting on $H\backslash G$ by right multiplication. By Theorem \ref{finite index subgroup maximal pattern entropy} it follows that $h_{top}^*(X_{H}^{G},G) = h_{top}^{*}(X_{H}^{G},N_{H})$. For $h\in N_{H}$ and $f\in X_{H}^{G}$ we will have the same behavior as $(\ref{normal subgroups fix cosets})$ since elements of $N_{H}$ fix right cosets of $H$. This is not exactly the product action, but if $H$ is a subgroup contained in the center of $G$, $Z(G)$, then for $h\in H$ and $f\in X_{H}^{G}$ we have that $\alpha_{h}^{H\backslash G}(f) = \alpha_{h}^r(f).$ We have the following theorem. 
	\begin{theorem}\label{center subgroup theorem}
		Let $H\leq G$ such that $H\subseteq Z(G)$ and $[G:H] < +\infty.$ If $(X,H,\alpha)$ is a t.d.s. and $(X_{H}^{G},G,\alpha^{H\backslash G})$ is the co-induced system, then $$h_{top}^{*}(X_{H}^{G},G) = [G:H]h_{top}^{*}(X,H).$$ 
	\end{theorem}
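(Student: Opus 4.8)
The plan is to leverage the observation made just before the statement: when $H\subseteq Z(G)$, the restriction of the co-induced action $\alpha^{H\backslash G}$ to $H$ is literally the diagonal product action of $H$ on $X^{[G:H]}$. Once this identification is in hand, the theorem follows by chaining together two results already cited, namely the finite-index invariance of maximal pattern entropy and the product formula. So the argument should be a short composition of equalities rather than a direct entropy estimate.

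First I would use Theorem~\ref{finite index subgroup maximal pattern entropy}. Since $[G:H]<+\infty$, this gives $h_{top}^{*}(X_{H}^{G},G)=h_{top}^{*}(X_{H}^{G},H)$, where $H$ acts by $\alpha^{H\backslash G}\rvert_{H}$. This removes $G$ from the computation and lets me work entirely with the $H$-action on $X_{H}^{G}$.

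Next I would make the key identification. Writing $r=[G:H]$ and fixing a bijection of $H\backslash G$ with $[r]$, the space $X_{H}^{G}=\prod_{\theta\in H\backslash G}X$ is identified with $X^{r}$ via $f\mapsto(f(\theta))_{\theta}$. For $h\in H$ and any section $s$, formula~(\ref{normal subgroups fix cosets}) reads $\alpha_{h}^{H\backslash G}(f)(\theta)=\alpha_{s(\theta)hs(\theta)^{-1}}f(\theta)$; because $H\subseteq Z(G)$ the conjugate $s(\theta)hs(\theta)^{-1}$ collapses to $h$, so $\alpha_{h}^{H\backslash G}(f)(\theta)=\alpha_{h}(f(\theta))$ for every coset $\theta$. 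Under the identification this is exactly the diagonal product action $\alpha_{h}^{r}$, whence $(X_{H}^{G},H,\alpha^{H\backslash G}\rvert_{H})$ and $(X^{r},H,\alpha^{r})$ are conjugate t.d.s.\ and therefore have equal maximal pattern entropy. Finally I would apply Proposition~\ref{maximal pattern entropy product formula} with group $H$ and $k=r$ to obtain $h_{top}^{*}(X^{r},H,\alpha^{r})=r\,h_{top}^{*}(X,H)$. Combining the three equalities yields $h_{top}^{*}(X_{H}^{G},G)=[G:H]\,h_{top}^{*}(X,H)$.

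Each step is routine given the cited results, so I do not expect a genuine obstacle; the one place demanding care is the middle step, where centrality is used in an essential way. Without $H\subseteq Z(G)$ the conjugating elements $s(\theta)$ survive and the restricted action is only a coordinate-twisted product rather than the honest diagonal one, so the product formula would not apply directly — this is precisely why the hypothesis $H\subseteq Z(G)$ cannot be dropped, as the subsequent counterexample in the paper confirms.
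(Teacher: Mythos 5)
Your proposal is correct and follows essentially the same route as the paper's proof: finite-index invariance (Theorem~\ref{finite index subgroup maximal pattern entropy}) to pass from $G$ to $H$, the conjugacy $(X_{H}^{G},H,\alpha^{H\backslash G}\rvert_{H})\cong (X^{r},H,\alpha^{r})$ forced by $H\subseteq Z(G)$ collapsing the conjugates $s(\theta)hs(\theta)^{-1}$ in~(\ref{normal subgroups fix cosets}), and then the product formula (Proposition~\ref{maximal pattern entropy product formula}). Your closing remark on why centrality is essential also matches the paper's subsequent counterexample discussion.
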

	\begin{proof}
		Let $[G:H] = r.$ Define $\psi: (X_{H}^{G},G,\alpha^{H\backslash G})\to (X^{r},H,\alpha^r)$ by $\psi(f) = f.$ By the discussion proceeding the theorem statement, $\psi$ is a $H$-factor map and $\left(X_{H}^{G},H,\alpha^{H\backslash G}\rvert_{H}\right)\cong (X^{r},H,\alpha^{r}).$ Since maximal pattern entropy is an isomorphism invariant $h_{top}^{*}(X_{H}^{G},H) = h_{top}^{*}(X^r,H).$ By  Theorem \ref{finite index subgroup maximal pattern entropy} and Proposition \ref{maximal pattern entropy product formula} it follows that
		\begin{align*}
			h_{top}^{*}\left(X_{H}^{G},G\right) &= h_{top}^{*}\left(X_{H}^{G},H\right)\\
			&= h_{top}^{*}(X^r,H)\\
			&= rh_{top}^{*}(X,H)
		\end{align*}
		as desired. 
	\end{proof}
	If $H$ is not contained in $Z(G)$, then it is possible for $$h_{top}^{*}(X_{H}^{G},G)\neq [G:H]h_{top}^{*}(X,H).$$ In fact,  there exists a group $G$ with a proper subgroup $H$ with $[G:H] < + \infty,$ and a t.d.s. $(X,H,\alpha)$ such that $h_{top}^{*}(X_{H}^{G},G) = h_{top}^{*}(X,H)$. The rest of this section is dedicated to building such an example.\\ 
	\indent Let $H = \mathbb{Z}$ and $G = \mathbb{Z}\rtimes_{\psi} \mathbb{Z}/2\mathbb{Z}$ where $\psi:\mathbb{Z}/ 2\mathbb{Z}\to \text{Aut}(\mathbb{Z})$ is the homomorphism defined by $\psi(x)(n) = -n$ and $\langle x \rangle = \mathbb{Z}/ 2\mathbb{Z}.$ We identify $\mathbb{Z}$ as the normal subgroup $\mathbb{Z}\times \{0\}$ of $G$. The quotient $\mathbb{Z}\backslash G \cong \mathbb{Z}/ 2\mathbb{Z}$ as groups and $[G:\mathbb{Z}] = 2$. Since $(0,x)(n,0) = (-n,x)$ and $(n,0)(0,x)=(n,x)$, we see that $\mathbb{Z}$ is not contained in $Z(G)$. Let $(X,\mathbb{Z},\alpha)$ be a t.d.s.  In Section \ref{co-induction prelim section}, we discussed that the co-induced system $(X_{\mathbb{Z}}^{G},G,\alpha^{\mathbb{Z}\backslash G})$ is independent of our choice of section of $\mathbb{Z}\backslash G.$ We fix the section $s:\mathbb{Z}\backslash G\to G$ to be $s(\mathbb{Z}(0,0)) = (0,0)$ and $s(\mathbb{Z}(0,x)) = (0,x).$ The co-induction action of $G$ acting on $X\times X$ with this fixed section for $(n,i)\in G$ is
	\begin{align*}
		&\alpha_{(n,0)}^{\mathbb{Z}\backslash G}(f)(0) = \alpha_{(0,0)(n,0)(0,0)}f(0) = \alpha_{(n,0)}f(0),\\
		&\alpha_{(n,0)}^{\mathbb{Z}\backslash G}(f)(x) = \alpha_{(0,x)(n,0)(0,x)}f(x) = \alpha_{(-n,0)}f(x),\\
		&\alpha_{(n,x)}^{\mathbb{Z}\backslash G}(f)(0) = \alpha_{(0,0)(n,x)(0,x)}f(x) = \alpha_{(n,0)}f(x),\\
		&\alpha_{(n,x)}^{\mathbb{Z}\backslash G}(f)(x) = \alpha_{(0,x)(n,x)(0,0)}f(0) = \alpha_{(-n,0)}f(0).\\
	\end{align*}
	Therefore, if $U,V$ are subsets of $X$ we have that 
	\begin{align*}
		\alpha_{(n,0)}^{\mathbb{Z}\backslash G}(U\times V) &= \alpha_{(n,0)}(U) \times \alpha_{(-n,0)}(V),\\
		\alpha_{(n,x)}^{\mathbb{Z}\backslash G}(U\times V) &= \alpha_{(n,0)}(V)\times \alpha_{(-n,0)}(U).
	\end{align*}
	We see that $\alpha^{\mathbb{Z}\backslash G}\rvert_{\mathbb{Z}}$ acts as $\alpha$ in the first coordinate and as $\alpha^{-1}$ in the second coordinate. \\
	\indent Tan, Ye, and Zhang in \cite{TAN_Ye_Zhang_set_of_sequence_entropes} build a t.d.s. $(X_1,\mathbb{Z},T)$ where the maximal pattern entropy of the system is $\log(2)$. For completeness we will include their construction in this paper.  Snoha, Ye, Zhang in \cite{Snhoa_Ye_Zhang_topological_sequence_entropy} build a $\mathbb{N}$-system where the maximal pattern entropy of the system is $\log(2)$. The construction is similar to the construction presented in \cite{TAN_Ye_Zhang_set_of_sequence_entropes}. We will utilize some of the ideas Snoha, Ye, and Zhang presented and apply them to Tan, Ye, and Zhang's system. After we discuss the construction of their system, we will show if we take the co-induced system $\left((X_1)_{\mathbb{Z}}^{G},G,T^{\mathbb{Z}\backslash G}\right)$ of $(X_1,\mathbb{Z},T)$ then $h_{top}^{*}((X_1)_{\mathbb{Z}}^{G},G)  = \log(2).$\\
	\indent Let $A$ be the one-point compactification of $\mathbb{Z}$ embedded into $\mathbb{R}^2$ along the unit circle, and $T_1:A \to A$ by $T_1(a_i) = a_{i+1}$ if $i\neq \infty$ and $T_1(a_{\infty}) = a_{\infty}$ (see figure \ref{our embedding of the integers with infinity point on the unit circle}). We begin the construction of $X_1.$ \\
	\indent For each $k\in \mathbb{N},$ we inductively choose non-negative integers $\{n_0^{k},n_{1}^{k},\dots,n_{k}^{k}\}$ first by letting $n_{0}^{k} = 0$ for all $k$, $n_1^{1} = 9,$ $n_{q+1}^{k} > 100\times n_{q}^{k}$ for $q=1,\dots,k-1,$ and $n_{1}^{k+1} > 100\times n_{k}^{k}.$ By induction, it follows that $n_{q}^{k} > 2k+4$ for all $k\in \mathbb{N}$ and $q=1,\dots,k.$ We choose neighborhoods around the points in $A$ contained in $\mathbb{R}^2$ that do the following:\\
	$(1)$ $U^{k}(a_i)$ is a closed neighborhood of $a_i$ for $i\in \mathbb{Z}$ and $U^{k}(a_{\infty})$ is a closed neighborhood of $a_{\infty}$,\\
	$(2)$ $U^{k}(a_{i})\cap U^{k}(a_{j}) = \emptyset$ for $i\neq j$, $U^{k}(a_i)\cap U^{k}(a_{\infty}) = \emptyset$ for $-n_{k}^{k}\leq i\leq n_{k}^{k},$ and $U^{k}(a_i)\subseteq U^k(a_{\infty})$ for $|i| > n_{k}^{k}.$ \\
	$(3)$ $U^{k+1}(a_{i})\subseteq U^{k}(a_{i})$ for all $i\in \mathbb{Z}\cup \{\infty\}$,\\
	$(4)$ $\text{diam}(U^{k}(a_i)) \leq 1/k$ for all $i\in \mathbb{Z}\cup \{\infty\}.$ 
	\\\begin{figure}[h]
		\centering
		\caption{$(A,\mathbb{Z},T_1)$}\label{our embedding of the integers with infinity point on the unit circle}
		\includegraphics[scale=3]{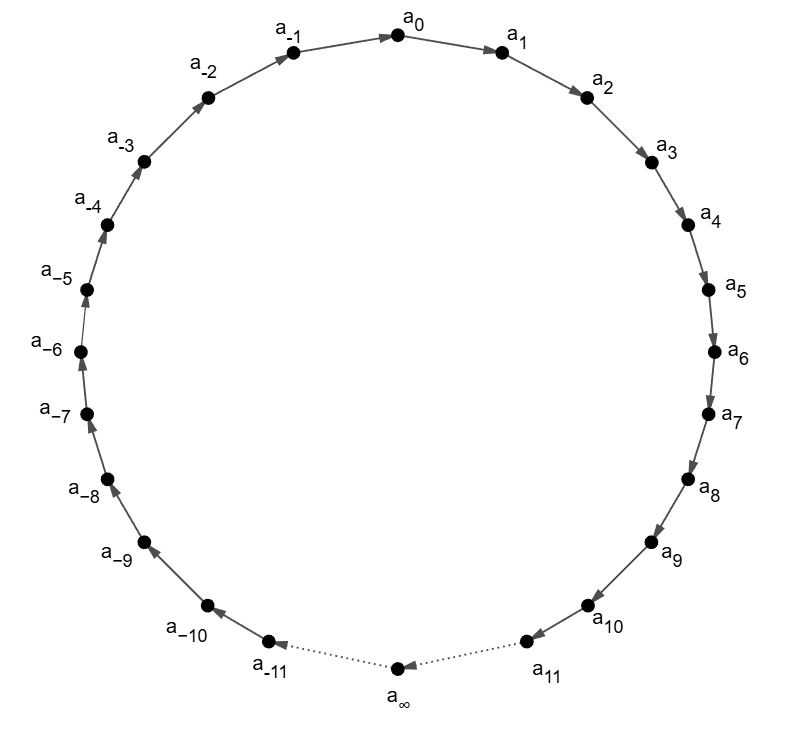}
	\end{figure}
	\indent Let $k\in \mathbb{N}$ and $V_k = \bigcup_{-n_{k}^{k}\leq i \leq n_{k}^{k}} \left(U^k(a_i)\cup U^k(a_{\infty})\right).$ By $(2)$ we have that $A\subseteq V_k$ and $V_{k+1}\subseteq V_{k}.$ By $(3)$ and $(4)$ it follows that $\bigcap_{k=1}^{\infty}V_{k} = A$. 
	For each $k\in \mathbb{N},$ let $F(k) = \{0,1\}^{\{0,\dots,k\}}$ and for $s\in F(k)$ we choose the numbers $j_{s,q}^{k}\in \mathbb{N}$ for $q=1,\dots,k$ such that 
	\begin{align*}
		\left|\frac{n_{q}^k-n_{q-1}^k}{2} - j_{s,q}^{k}\right| \leq \frac{k}{2}
	\end{align*} 
	and define $p_{s,q}^{k} = n_{q}^{k}-n_{q-1}^{k} - j_{s,q}^{k} - s(q).$ It follows that 
	\begin{align*}
		\left|\frac{n_{q}^k-n_{q-1}^k}{2} - p_{s,q}^{k}\right| \leq \frac{k}{2} + 1
	\end{align*}
	and 
	\begin{align*}
		|j_{s,q}^{k} - p_{s,q}^{k}| \leq k+1.
	\end{align*}
	\begin{proposition}\label{monotonically increasing jump numbers}
		Let $k\in \mathbb{N}$ and $s\in F(k),$ the following are true:\\
		$(1)$ $j_{s,q}^{k} < j_{s,q+1}^{k}$ and $p_{s,q}^{k} < p_{s,q+1}^{k}$ for $q=1,\dots,k-1.$\\
		$(2)$ For any $s'\in F(k+1),$ $j_{s,k}^{k} < j_{s',1}^{k+1}$ and $p_{s,k}^{k} < p_{s',1}^{k+1}.$\\
		$(3)$ $j_{s,q}^{k},p_{s,q}^{k} \to \infty$ as $k\to \infty.$ 
	\end{proposition}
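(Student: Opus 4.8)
The plan is to reduce all three claims to the two-sided estimates that the construction already supplies, namely
\begin{align*}
	\frac{n_q^k - n_{q-1}^k}{2} - \frac{k}{2} &\le j_{s,q}^k \le \frac{n_q^k - n_{q-1}^k}{2} + \frac{k}{2},\\
	\frac{n_q^k - n_{q-1}^k}{2} - \frac{k}{2} - 1 &\le p_{s,q}^k \le \frac{n_q^k - n_{q-1}^k}{2} + \frac{k}{2} + 1,
\end{align*}
combined with the rapid growth $n_{q+1}^k > 100\, n_q^k$ and the uniform lower bound $n_q^k > 2k+4$. The guiding observation is that each of $j_{s,q}^k$ and $p_{s,q}^k$ lies within an additive error of order $k$ around half the block length $n_q^k - n_{q-1}^k$, while consecutive block lengths differ by a factor of at least $100$. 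Since $n_q^k > 2k+4$, the $\pm k$-sized errors are negligible against the gaps, so every comparison collapses to comparing half-block-lengths.

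For part $(1)$ I would bound $j_{s,q}^k$ above by $\frac{n_q^k}{2} + \frac{k}{2}$ (using $n_{q-1}^k \ge 0$) and $j_{s,q+1}^k$ below, via $n_{q+1}^k > 100\, n_q^k$, by $\frac{n_{q+1}^k - n_q^k}{2} - \frac{k}{2} > \frac{99\, n_q^k}{2} - \frac{k}{2}$. Their difference then exceeds $49\, n_q^k - k$, which is positive because $n_q^k > 2k+4 > k$. Repeating the computation with the extra $\pm 1$ gives $p_{s,q}^k < p_{s,q+1}^k$. Part $(2)$ is the same comparison across the level boundary: bound $j_{s,k}^k \le \frac{n_k^k}{2} + \frac{k}{2}$ and, using $n_0^{k+1} = 0$ together with $n_1^{k+1} > 100\, n_k^k$, bound $j_{s',1}^{k+1} \ge \frac{n_1^{k+1}}{2} - \frac{k+1}{2} > 50\, n_k^k - \frac{k+1}{2}$; the gap is again forced positive by $n_k^k > 2k+4$, and likewise for $p$.

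For part $(3)$, part $(1)$ shows that for each fixed $k$ and $s$ the smallest of the values is $j_{s,1}^k$, so it suffices to bound $j_{s,1}^k$ (and $p_{s,1}^k$) below uniformly in $s$. Since $n_0^k = 0$ and $n_1^k > 2k+4$, one obtains $j_{s,1}^k \ge \frac{n_1^k}{2} - \frac{k}{2} > \frac{k}{2} + 2$, which tends to infinity with $k$; the $p$ case differs only by the additive constant coming from the $\pm 1$.

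I do not expect a genuine obstacle: once the two-sided estimates are in hand, all three parts amount to verifying that the $\pm k$-sized errors are dominated by the factor-$100$ growth of the $n_q^k$, which is exactly what $n_q^k > 2k+4$ guarantees. The only point needing a little care is the uniformity in $s \in F(k)$ and in $q$ required for part $(3)$ — but since none of the lower bounds involve $s$, this uniformity is automatic, and the minimum over $q$ is handled by part $(1)$.
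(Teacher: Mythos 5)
Your proposal is correct and takes essentially the same route as the paper: both arguments reduce all three parts to the two-sided estimates $\bigl|\frac{n_q^k-n_{q-1}^k}{2}-j_{s,q}^k\bigr|\le \frac{k}{2}$ (with the extra $\pm 1$ for $p_{s,q}^k$) combined with $n_{q+1}^k>100\,n_q^k$, $n_1^{k+1}>100\,n_k^k$, and $n_q^k>2k+4$, the only cosmetic difference being that you bound the differences $j_{s,q+1}^k-j_{s,q}^k$ directly whereas the paper chains the single inequality $\frac{n_q^k-n_{q-1}^k}{2}+\frac{k}{2}+1<\frac{n_{q+1}^k-n_q^k}{2}-\frac{k}{2}-1$ to handle $j$ and $p$ simultaneously. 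In part $(3)$ you make the uniform lower bound $j_{s,1}^k\ge \frac{n_1^k}{2}-\frac{k}{2}>\frac{k}{2}+2$ explicit where the paper simply cites the growth of the $n_1^k$'s before invoking part $(1)$, but this is the same argument.
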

	\begin{proof}
		$(1)$ Let $q\in \{1,\dots,k-1\}$, by definition of $\{n_{1}^{k},\dots,n_{k}^{k}\}$ we have that  
		\begin{align*}
			n_{q+1}^{k} - n_{q}^k &>  2n_{q}^{k}\\
			&\geq n_{q}^{k} + n_{q}^{k}-n_{q-1}^{k}\\
			&> n_{q}^{k}-n_{q-1}^{k} + 2k + 4.
		\end{align*}
		Therefore, 
		\begin{align*}
			&\frac{n_{q}^{k}-n_{q-1}^{k}}{2} + \frac{k}{2} < \frac{n_{q}^{k}-n_{q-1}^{k}}{2} + \frac{k}{2} + 1 < 
			\frac{n_{q+1}^{k}-n_{q}^{k}}{2} - \frac{k}{2} - 1 < \frac{n_{q+1}^{k}-n_{q}^{k}}{2} - \frac{k}{2},
		\end{align*}
		and
		\begin{align*}
			&j_{s,q}^{k} \leq \frac{n_{q}^{k}-n_{q-1}^{k}}{2} + \frac{k}{2} < \frac{n_{q+1}^{k}-n_{q}^{k}}{2} - \frac{k}{2}\leq j_{s,q+1}^{k},\\
			&p_{s,q}^{k} \leq \frac{n_{q}^{k}-n_{q-1}^{k}}{2} + \frac{k}{2}  + 1 < \frac{n_{q+1}^{k}-n_{q}^{k}}{2} -  \frac{k}{2} - 1\leq p_{s,q+1}^{k}.
		\end{align*}
		Thus, $j_{s,q}^{k} < j_{s,q+1}^{k}$ and $p_{s,q}^{k} < p_{s,q+1}^{k}$.\\
		$(2)$ By definition of $n_{1}^{k+1}$ we have that 
		\begin{align*}
			n_{1}^{k+1} &> 100n_{k}^{k}\\
			&\geq99n_{k}^{k} + n_{k}^{k} - n_{k-1}^{k}\\
			&>n_{k}^{k} - n_{k-1}^{k} + (k+3) + k+2
		\end{align*}
		and 
		\begin{align*}
			\frac{n_{k}^{k}-n_{k-1}^{k}}{2} + \frac{k}{2} + 1 < \frac{n_{1}^{k+1}}{2} - \frac{k+1}{2} - 1.
		\end{align*}
		By the above inequality, it follows that $j_{s,k}^{k} < j_{s',1}^{k+1}$ and $p_{s,k}^{k} < p_{s',1}^{k+1}.$\\
		$(3)$ Since
		\begin{align*}
			&\frac{n_1^k}{2}-\frac{k}{2}\leq j_{s,1}^{k}\leq \frac{n_1^k}{2} + \frac{k}{2},\\
			&\frac{n_1^k}{2}-\frac{k}{2}-1\leq p_{s,1}^{k}\leq \frac{n_1^k}{2} + \frac{k}{2} + 1,
		\end{align*}
		we have by the definition of the $n_1^k$'s that $j_{s,1}^{k},p_{s,1}^{k}\to \infty$ as $k\to \infty.$ By $(1)$ it follows that $j_{s,q}^{k},p_{s,q}^{k}\to \infty$ as $k\to \infty$ for $q=1,\dots,k.$ 
	\end{proof}	
	\noindent For $k\in \mathbb{N}$, $s\in F(k)$, and $q=1,\dots,k$ we will call the numbers $j_{s,q}^{k}, p_{s,q}^{k}$ \textbf{jump numbers}. \\
	\indent For each $k\in \mathbb{N}$ and $s\in F(k),$ choose a finite collection of distinct points in $\mathbb{R}^2$, $Y_s = \left\{y_{s,0},\dots,y_{s,n_{k}^{k}}\right\}$ such that for $q=1,\dots,k$ 
	\begin{align*}
		&y_{s,n_{q-1}^{k}+i} \in U^{k}(a_{s(q-1)+i}), \text{ for } i=0,\dots,j_{s,q}^{k}-1\\
		&y_{s,n_{q-1}^{k}+j_{s,q}^{k} + i}\in U^{k}(a_{-p_{s,q}^{k}+i}), \text{ for } i=0,\dots,n_{q}^{k}-n_{q-1}^{k}-j_{s,q}^{k}.
	\end{align*}
	Since $p_{s,q}^k = n_{q}^{k}-n_{q-1}^k-j_{s,q}^{k}-s(q),$ we see that $n_{q}^k - n_{q-1}^{k}-j_{s,q}^{k} - p_{s,q}^{k} = s(q)$ and $y_{s,n_{q}^{k}}\in U^{k}(a_{s(q)}).$ \\
	For each $s\in F(k)$ we choose a sequence of points $\{x_{s,i}\}_{i\in\mathbb{Z}}$ such that 
	\begin{align*}
		&1.) \text{ }x_{s,i} = y_{s,i} \text{ for } 0\leq i\leq n_{k}^{k}, \\
		&2.) \text{ }x_{s,n_{k}^{k} + r} \in U^{k+r}(a_{s(k) + r}) \text{ and } x_{s,-r}\in U^{k+r}(a_{s(0)-r}), \text{ for } r\geq1,\\
		&3.) \text{ } x_{s,i}\neq x_{s,j} \text{ if } i\neq j.
	\end{align*}
	As $i\to \infty$, both $x_{s,i}$ and $x_{s,-i}$ approach $a_{\infty}.$ Let $X_{s} = \{x_{s,i}: i\in \mathbb{Z}\}\cup \{a_{\infty}\},$ then $X_{s}$ is homeomorphic to $A$ and $\lim_{k\to \infty}H_{d}\left(\bigcup_{s\in F(k)}X_{s},A\right) = 0$ as $$H_d\left(\bigcup_{s\in F(k)}X_{s},A\right) \leq 1/k,$$ where $H_{d}$ denotes the Hausdorff distance. This gives us that $A$ is the set of accumulation points of the set $\bigcup_{k=1}^{\infty}\bigcup_{s\in F(k)}X_{s}.$ We may assume $X_{s} \cap A = \{a_{\infty}\}$ and $X_{s_1}\cap X_{s_2}=\{a_{\infty}\}$ for distinct $s_1,s_2\in \bigcup_{k=1}^{\infty}F(k).$ Let $X_1 = \overline{\bigcup_{k=1}^{\infty}\bigcup_{s\in F(k)}X_{s}}$ and define $T: X_1\to X_1$ to be $T_{1} = T\rvert_{A}$ and $T(x_{s,j}) = x_{s,j+1}$ for $s\in \bigcup_{k=1}^\infty F(k).$ Since our jump numbers go to infinity, we see that $T$ is a homeomorphism of our space.  Tan, Ye, and Zhang proved the following 
	\begin{theorem}[Tan, Ye, and Zhang, Lemma 6.2 \cite{TAN_Ye_Zhang_set_of_sequence_entropes}]\label{t.d.s. with log(2) maximal pattern entropy}
		The t.d.s. $(X_1,\mathbb{Z},T)$ has $h_{top}^{*}(X_1,\mathbb{Z}) = \log(2)$ and $IN_{2}^{e}(X_1,\mathbb{Z},T) = \{(a_i,a_{i+1}),(a_{i+1},a_i): i\in \mathbb{Z})\}.$  
	\end{theorem}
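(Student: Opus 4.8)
The plan is to establish the two assertions separately: first isolate the independence sets that witness the adjacent pairs, and then show that nothing else survives. The key computational input throughout is the \emph{occurrence formula}: when $k$ is large relative to $|\ell|$, the small label $a_\ell$ is seen along the orbit of $x_{s,0}$ exactly at the indices $n_q^k-s(q)+\ell$ for $q=0,\dots,k$. This holds because inside each block the label increases by one except at the single jump at $j_{s,q}^k$, and the jump numbers are tuned so that $y_{s,n_q^k}\in U^k(a_{s(q)})$; the infinite tails fit the same formula through the endpoints $q=0$ and $q=k$.

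For the inclusion $\{(a_i,a_{i+1}),(a_{i+1},a_i):i\in\mathbb{Z}\}\subseteq IN_2^{e}(X_1,\mathbb{Z},T)$ I would first treat $(a_0,a_1)$, the general case following since $IN_2$ is $G$-invariant by Theorem \ref{properties of IN-tuples}(3) and $Ta_j=a_{j+1}$. I claim $I_k=\{n_1^k,\dots,n_k^k\}$ is an independence set of size $k$ for $(U^k(a_0),U^k(a_1))$: given $\omega\colon I_k\to\{0,1\}$, choose $s\in F(k)$ with $s(q)=\omega(n_q^k)$, so that $x=x_{s,0}$ satisfies $T^{n_q^k}x=y_{s,n_q^k}\in U^k(a_{\omega(n_q^k)})$, and restricting $\omega$ to any finite subset gives the independence condition. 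Since $\text{diam}(U^k(a_j))\le 1/k$ and $U^{k+1}(a_j)\subseteq U^k(a_j)$, the pairs $(U^k(a_0),U^k(a_1))$ are cofinal in the product neighbourhood filter of $(a_0,a_1)$, so arbitrarily large independence sets exist for every neighbourhood; hence $(a_0,a_1)\in IN_2$, and $(a_1,a_0)\in IN_2$ by permutation-symmetry of independence. Via Theorem \ref{local maximal pattern entropy} this already gives $h_{top}^{*}(X_1,\mathbb{Z})\ge\log 2$.

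For the reverse inclusion I would cut the problem down to $A\setminus\{a_\infty\}$. Each $x_{s,m}\neq a_\infty$ is isolated in $X_1$ (the accumulation set is exactly $A$) and the only periodic point of $(X_1,T)$ is the fixed point $a_\infty$; so if a coordinate of an $IN^{e}$-pair were such an isolated $u$, the clopen neighbourhood $\{u\}$ together with the constant pattern on two times would force a nonconstant periodic orbit through $u$, which is impossible. Thus both coordinates lie in $A$, and to discard $a_\infty$ I would note that realizing the constant pattern ``$a_\ell$ everywhere'' on an independence set of size $\ge 3$ pins the times, up to a bounded shift, to block boundaries $n_q^{k}$ of a single copy; mixing in a small neighbourhood of $a_\infty$ (a large label) at one of these times is then impossible, since that index carries a label within $O(1)$ of $\ell$. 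Hence the coordinates are $a_i,a_j$ with $i\neq j$, and it remains to force $d:=|i-j|=1$. Applying the occurrence formula to one pair of times $g_1<g_2$ with $\delta=g_2-g_1$, the patterns $(a_i,a_i),(a_i,a_j),(a_j,a_i)$ force $\delta,\ \delta-d,\ \delta+d$ to each lie within $1$ of a \emph{multi-block length} $n_b^k-n_a^k$, producing three multi-block lengths $M_3<M_1<M_4$ with $M_1-M_3$ and $M_4-M_1$ both in $[d-2,d+2]$. The inequalities $n_{q+1}^k>100\,n_q^k$ and $n_1^{k+1}>100\,n_k^k$ make the block lengths super-increasing (each exceeds $99$ times the sum of all previous ones), so interval sums are distinct and for comparable sizes the gaps between consecutive multi-block lengths grow by a factor $\ge 99$; thus two gaps cannot both be of size $\approx d$ once $d\ge 2$. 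For $d=1$ no contradiction arises, because $\delta-1,\delta,\delta+1$ can all sit within $1$ of the \emph{single} multi-block length $M=\delta$. This yields $IN_2^{e}=\{(a_i,a_{i+1}),(a_{i+1},a_i)\}$. Finally, a sub-tuple of an $IN$-tuple is an $IN$-tuple and three distinct labels cannot be pairwise adjacent, so $IN_k^{e}=\emptyset$ for $k\ge3$; Theorem \ref{local maximal pattern entropy} then gives $h_{top}^{*}(X_1,\mathbb{Z})=\log 2$.

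The main obstacle is the quantitative bookkeeping in the last step: one must carry the $O(1)$ slack coming from $s(q)\in\{0,1\}$ and from the $\pm(k/2+1)$ freedom in the jump numbers, and then convert the super-increasing growth into the statement that no three multi-block lengths are nearly equally spaced. The cleanest route seems to be to fix the largest block occurring in each multi-block length: those ending at a common block have gaps that themselves grow super-increasingly (so the two required gaps differ by a factor $\ge 99$), while those ending at different blocks differ in size by a factor $\ge 99$; in both situations $M_1-M_3\approx M_4-M_1\approx d$ is impossible. It is precisely here that the geometric separation $n_{q+1}^k>100\,n_q^k$ is indispensable.
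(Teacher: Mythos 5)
You should note first that the paper does not actually prove this statement: it is imported verbatim from Tan--Ye--Zhang \cite{TAN_Ye_Zhang_set_of_sequence_entropes} (their Lemma 6.2), and the paper only reproduces the construction of $(X_1,\mathbb{Z},T)$ for completeness. So your blind proposal is being compared not to an in-paper proof but to the cited source and to the adjacent machinery the paper does develop for its co-induction argument. Measured against that, your outline is essentially the standard route and is correct in structure. Your ``occurrence formula'' is exactly the paper's Lemma \ref{lemma 1}; your lower bound via the independence sets $\{n_1^k,\dots,n_k^k\}$, with $s(q)$ chosen to match the pattern and cofinality of the $U^k$'s in the neighborhood filter, is the intended witness for $(a_0,a_1)\in IN_2^{e}$, and invariance plus Theorem \ref{properties of IN-tuples}(3) and Theorem \ref{local maximal pattern entropy} handle the rest of that direction. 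For the converse, your elimination of isolated points via periodicity is clean and correct (the only periodic point is $a_\infty$, while the $x_{s,m}$ are pairwise distinct), and your reduction of $IN_k^{e}=\emptyset$ for $k\ge 3$ to pairwise adjacency of labels is fine since sub-tuples of $IN$-tuples are $IN$-tuples. The two places that carry real content are exactly where you flag them: (i) excluding $a_\infty$ as a coordinate requires that two $a_\ell$-visits pin the realizing orbit's scale and block boundaries uniquely up to an $O(1)$ shift, so that the mixed pattern $(\ell,\ell,\infty)$ lands at an index whose label is $O(1)$ and hence outside $U^k(a_\infty)$ --- this rigidity is precisely the ``similar positions'' statement the paper proves as Lemma \ref{lemma 2} (and deploys in the same two-applications-then-contradiction style in Lemma \ref{the key lemma}); and (ii) your three-multi-block-length spacing argument for $d=|i-j|\ge 2$ does go through, but only after the case analysis you sketch at the end: lengths $n_b^k-n_a^k$ sharing the largest block $n_b^k$ have consecutive gaps $n_{a'}^k-n_a^k$ that grow by factors $\ge 99$, lengths with different largest blocks (including across scales, using $n_1^{k+1}>100\,n_k^k$) differ multiplicatively by $\ge 99$, and the equality degeneracies $M_3=M_1$ or $M_1=M_4$ force $d\le 2$ and are then killed because distinct multi-block lengths are separated by at least $n_1^1=9>4$. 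One further bookkeeping item you should make explicit: patterns like $(a_i,a_j)$ at gap exactly $\delta=j-i$ can be realized within a single block boundary (or along the orbit of an $a_m$), so those pairs escape the multi-block constraint; since an independence set of size $\ge 3$ always contains a pair with gap $\ne \pm(j-i)$, the contradiction survives, but a complete write-up must say so. With those points filled in, your proposal is a faithful reconstruction of the cited proof rather than a new approach.
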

	We'll show that $h_{top}^{*}\left((X_1)_{\mathbb{Z}}^{G},G\right) = \log(2)$. Before doing so, we'll prove a series of technical lemmas that will be important to us. The first lemma we'll prove is a generalized version of Lemma 5.1 found in \cite{Snhoa_Ye_Zhang_topological_sequence_entropy}. For their variation of the construction, Snoha, Ye, and Zhang proved the case when $j = 0$ and $k = 1.$ 
	\begin{lemma}\label{lemma 1}
		Let $k\in \mathbb{N}$ and $s\in F(k).$ If $j\in \mathbb{Z}$ and $|j| \leq \min\left\{j_{s,1}^{k}-1, p_{s,1}^{k}-1\right\}$ then 
		\begin{align*}
			X_{s} \cap U^{k}\left(a_j\right) = \left\{x_{s,n_{i}^{k}-s(i) + j}: i=0,\dots,k\right\}.
		\end{align*}
	\end{lemma}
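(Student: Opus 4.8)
The plan is to track, for each integer $i$, the unique neighborhood from the family $\{U^{k}(a_m)\}$ in which the construction forces the point $x_{s,i}$ to lie, and then reduce the lemma to a purely combinatorial counting problem. Reading off the placement rules I would define a ``level'' $m(i)$ as follows: $m(i) = s(0) + i$ for $i < 0$ (from the defining condition $x_{s,-r}\in U^{k+r}(a_{s(0)-r})$), $m(i) = s(k) + (i - n_{k}^{k})$ for $i > n_{k}^{k}$ (from $x_{s,n_k^k+r}\in U^{k+r}(a_{s(k)+r})$), and for $0 \le i \le n_{k}^{k}$ the sawtooth value dictated by the definition of $Y_s$, namely $m(n_{q-1}^{k} + i') = s(q-1) + i'$ on the up-slope $0\le i'\le j_{s,q}^{k}-1$ and $m(n_{q-1}^{k} + j_{s,q}^{k} + i') = -p_{s,q}^{k} + i'$ on the down-slope. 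In every case the nesting property $(3)$ of the neighborhoods guarantees $x_{s,i} \in U^{k}(a_{m(i)})$.

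The first key step is to reduce membership in $U^{k}(a_j)$ to the single equation $m(i) = j$. Since the hypothesis gives $|j| \le j_{s,1}^{k}-1 \le n_{k}^{k}$, property $(2)$ tells us the neighborhoods $U^{k}(a_m)$ with $|m| \le n_{k}^{k}$ are pairwise disjoint and each is disjoint from $U^{k}(a_{\infty})$, while $U^{k}(a_m) \subseteq U^{k}(a_{\infty})$ once $|m| > n_{k}^{k}$. Hence $x_{s,i} \in U^{k}(a_j)$ if and only if $m(i) = j$, and the unique point $a_{\infty}\in X_s$ that is not one of the $x_{s,i}$ cannot lie in $U^{k}(a_j)$. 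This isolates the entire problem to solving $m(i) = j$ over $i \in \mathbb{Z}$.

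The heart of the argument is solving that equation, and here the hypothesis $|j| \le \min\{j_{s,1}^{k}-1, p_{s,1}^{k}-1\}$ together with the monotonicity in Proposition \ref{monotonically increasing jump numbers} does the work: it forces $-p_{s,q}^{k} + 1 \le j \le j_{s,q}^{k}-1$ for every $q$, so $j$ is attained on both the up-slope and the down-slope of each tooth exactly when the relevant endpoint condition holds. A short computation, using $p_{s,q}^{k} = n_{q}^{k} - n_{q-1}^{k} - j_{s,q}^{k} - s(q)$, shows that the up-slope of tooth $q$ meets $j$ precisely when $j \ge s(q-1)$, at the position $n_{q-1}^{k} - s(q-1) + j$; the down-slope of tooth $q$ meets $j$ precisely when $j \le s(q)$, at $n_{q}^{k} - s(q) + j$; the negative tail contributes the solution $j - s(0)$ exactly when $j < s(0)$; and the positive tail contributes $n_{k}^{k} - s(k) + j$ exactly when $j > s(k)$. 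Since $s$ takes values in $\{0,1\}$, for each $i \in \{0,\dots,k\}$ exactly one of the two sources meeting at the position $n_{i}^{k} - s(i) + j$ fires, so this position is a solution; conversely every solution is of this form, and because the $n_{i}^{k}$ are so far apart these $k+1$ positions are distinct. This yields $X_{s} \cap U^{k}(a_j) = \{x_{s,n_{i}^{k} - s(i) + j} : i = 0,\dots,k\}$.

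I expect the main obstacle to be organizational rather than conceptual: carefully matching the two descriptions of each interior position $n_{i}^{k} - s(i) + j$ (as the top of the down-slope of tooth $i$ and as the bottom of the up-slope of tooth $i+1$), handling the two tails $i<0$ and $i>n_{k}^{k}$ consistently with the boundary levels $s(0)$ and $s(k)$, and verifying that no position is double-counted or missed. The role of the hypothesis must also be pinned down precisely, since it is exactly what guarantees that $j$ lies in the common range of every tooth, so that each of the $k+1$ levels contributes exactly one point rather than some teeth missing $j$ entirely.
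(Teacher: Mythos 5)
Your proposal is correct and takes essentially the same route as the paper's proof: both arguments pin each $x_{s,i}$ to the unique $U^{k}(a_{m})$ dictated by the sawtooth construction (nesting property $(3)$ for the two tails, disjointness property $(2)$ for the ``if and only if''), invoke the hypothesis $|j|\leq \min\{j_{s,1}^{k}-1,p_{s,1}^{k}-1\}$ together with Proposition \ref{monotonically increasing jump numbers} so that every tooth's range contains $j$, and then enumerate the indices where the level equals $j$. The only difference is organizational: the paper splits into the cases $j\in\{0,1\}$, $j>1$ (via the $j_{s,q}^{k}$'s), and $j<0$ (via the $p_{s,q}^{k}$'s), whereas your level function $m(i)$ treats all admissible $j$ uniformly, and your one imprecision --- ``exactly one of the two sources fires,'' which fails when $j=s(i)$ for an interior $i$ since both the down-slope of tooth $i$ and the up-slope of tooth $i+1$ fire --- is harmless because both then report the same index $n_{i}^{k}$, exactly the matching-at-shared-endpoints point you flag yourself.
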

	\begin{proof}
		By our construction of $\{x_{s,i}\}_{i\in \mathbb{Z}}$ we have that $x_{s,n_{q}^{k}}\in U^{k}(a_{s(q)})$. This implies that $$X_{s}\cap U^{k}(a_0) = \left\{x_{s,n_{q}^{k}-s(q)}:q=0,\dots,k\right\},$$ and 
		\begin{align*}
			X_{s}\cap U^{k}(a_1) = \left\{x_{s,n_{q}^{k}-s(q) + 1}:q=0,\dots,k\right\}.
		\end{align*}
		Assume that $j > 1$ and $j \leq \min\{j_{s,1}^{k}-1,p_{s,1}^{k}-1\}$, then $j \leq j_{s,1}^{k} -1$ and by Proposition \ref{monotonically increasing jump numbers}, $j < j_{s,q}^{k}$ for $q=1,\dots,k.$ By the construction of $\{x_{s,i}\}_{i\in \mathbb{Z}}$, we have that 
		\begin{align*}
			&1.) \text{ }x_{s,i} = y_{s,i}, \text{ for }0\leq i\leq n_{k}^{k}\\
			&2.) \text{ }x_{s,n_{k}^{k} + r} \in U^{k+r}\left(a_{s(k) + r}\right) \text{ and } x_{s,-r}\in U^{k+r}\left(a_{s(0)-r}\right),\text{ for } r\geq 1. 
		\end{align*}
		As $j > 1,$ it is clear that $x_{s,-r}\notin U^{k}(a_{j})$ for $r\geq 1.$ Let $q\in \{1,\dots,k\},$ then $x_{s,n_{q-1}^{k}+r}\in U^{k}(a_{s(q-1)+r})$ for $0\leq r < j_{s,q}^{k}.$ Since $1 < j\leq j_{s,q}^{k}-1$, we have that $0 < j-s(q-1) < j_{s,q}^{k}$. Whenever $r = j-s(q-1)$ we have that $x_{s,n_{q-1}^{k}+j-s(q-1)}\in U^{k}(a_{j}).$ For $0 \leq r < j-s(q-1)$ or $j-s(q-1) <  r < j_{s,q}^{k}$, it can be easily seen that $x_{s,n_{q-1}^{k}+r}\notin U^{k}(a_{j}).$ For $i\in \{n_{q-1}^{k}+j_{s,q}^{k},\dots, n_{q}^{k}\}$, we have that $x_{s,i} = y_{s,n_{q-1}^{k}+j_{s,q}^{k} + r}$ for some $r \in \{0,\dots, n_{q}^{k}-n_{q-1}^{k}-j_{s,q}^{k}\}.$ As $y_{s,n_{q-1}^{k}+j_{s,q}^{k}+r}\in U^{k}(a_{-p_{s,q}^{k} + r})$, and $r - p_{s,q}^{k} \leq 1$ for all $r$ we see that $x_{s,i}\notin U^{k}(a_j).$ If $i = n_{k}^{k}+j-s(k),$ as $j > 1$ we know that $j-s(k) >0$ and $x_{s,n_{k}^{k}+j-s(k)}\in U^{k+j-s(k)}(a_{j})\subseteq U^{k}(a_j).$ For $0 < r < j-s(k)$ or $j-s(k) < r$, it can be easily seen that $x_{s,n_{k}^{k}+r}\notin U^{k}(a_j).$ As this exhausted all possible options for $i,$ we see that in fact 
		$$\left\{x_{s,n_{q}^{k}-s(q)+j}:q=0,\dots,k\right\} = X_{s}\cap U^{k}(a_j).$$ 
		If $j< 0,$ the proof is the same but instead of using the $j_{s,q}^{k}$ numbers, the $p_{s,q}^{k}$ numbers are used instead.
	\end{proof}
	Before proving our next result, let's define two terms. The first is iterative distance. Let $k\in \mathbb{N}$ and $s\in F(k).$ For $x,y\in X_{s}\backslash \{a_{\infty}\}$ we know that $x = x_{s,i}$,$y=x_{s,j}$ for some $i,j\in\mathbb{Z}.$ The \textbf{iterative distance} between $x,y$ is denoted by $I(x,y) = |i-j|.$ If $j > i,$ we say that $y>x.$ The next definition we will need is \textbf{similar positions}. Let $k > 0, s_1,s_2\in F(k).$ We say that $x\in X_{s_1}\cap U^{1}(a_0)$ and $y\in X_{s_2}\cap U^{1}(a_0)$ are in similar positions if for some $i\in \{0,\dots,k\}$
	\begin{align*}
		x &= x_{s_1,n_{i}^{k}-s_1(i)},\\
		y&= x_{s_2,n_{i}^{k}-s_2(i)}.
	\end{align*}
	In \cite{Snhoa_Ye_Zhang_topological_sequence_entropy} Snoha, Ye, and Zhang introduced the idea of similar positions for their variation of the construction. Their construction is similar in spirit, but different in some ways. For this reason, we'll prove a variation of Lemma 5.4(4) in \cite{Snhoa_Ye_Zhang_topological_sequence_entropy} for our context.
	Before proving our next result, let's define two terms. The first is iterative distance. Let $k\in \mathbb{N}$ and $s\in F(k).$ For $x,y\in X_{s}\backslash \{a_\infty\}$ we know that $x = x_{s,i}$, $y=x_{s,j}$ for some $i,j\in\mathbb{Z}.$ The \textbf{iterative distance} between $x,y$ is denoted by $I(x,y) = |i-j|.$ If $j > i,$ we say that $y>x.$ The next definition we will need is \textbf{similar positions}. Let $k \in \mathbb{N}$ and $s_1,s_2\in F(k).$ We say that $x\in X_{s_1}\cap U^{1}(a_0)$ and $y\in X_{s_2}\cap U^{1}(a_0)$ are in similar positions if for some $i\in \{0,\dots,k\}$
	\begin{align*}
		x &= x_{s_1,n_{i}^{k}-s_1(i)},\\
		y&= x_{s_2,n_{i}^{k}-s_2(i)}.
	\end{align*}
	In \cite{Snhoa_Ye_Zhang_topological_sequence_entropy} Snoha, Ye, and Zhang introduced the idea of similar positions for their variation of the construction. Their construction is similar in spirit, but different in some ways. For this reason, we'll prove the following variation of Lemma 5.4(4) in \cite{Snhoa_Ye_Zhang_topological_sequence_entropy} for our context.
	\begin{lemma}\label{lemma 2}
		Let $k,k' \in \mathbb{N}$ and $s\in F(k),s'\in F(k').$ If $x,y\in X_{s}\cap U^{1}(a_0)$ and $z,w\in X_{s'}\cap U^{1}(a_0)$ such that $x<y,z < w$, and $I(x,y) = I(z,w),$ then $k = k'$, $x$ is in a similar position as $z$, and $y$ is in a similar position as $w.$  
	\end{lemma}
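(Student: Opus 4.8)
The plan is to first determine exactly which points of $X_s$ lie in $U^1(a_0)$, and then to read off the iterative distances from the rapidly growing numbers $n_q^k$. I would begin by proving
\[
X_s \cap U^1(a_0) \;=\; X_s \cap U^k(a_0) \;=\; \{x_{s,n_i^k - s(i)} : i = 0,\dots,k\},
\]
where the second equality is exactly Lemma \ref{lemma 1} with $j=0$. For the first equality, one inclusion is free since $U^k(a_0)\subseteq U^1(a_0)$ by nesting. For the reverse, every point $x_{s,i}$ lies by construction in some $U^{k'}(a_m)$ with $k'\geq k$, hence in $U^1(a_m)$; as the $U^1(a_m)$ for distinct $m\in\mathbb{Z}\cup\{\infty\}$ are pairwise disjoint and $0\in[-n_1^1,n_1^1]$, membership in $U^1(a_0)$ forces $m=0$, so the point lies in $U^{k'}(a_0)\subseteq U^k(a_0)$. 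The accumulation point $a_\infty$ is excluded because $a_\infty\in U^1(a_\infty)$, which is disjoint from $U^1(a_0)$. The only slightly delicate case is the tail indices $i>n_k^k$ and $i<0$, which I would dispatch by checking that $s(k)+r\neq 0$ (respectively $s(0)-r\neq 0$, except for the already-listed point $x_{s,-1}$ occurring when $s(0)=1$).

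Because $n_i^k - s(i)$ is strictly increasing in $i$ (the gaps $n_i^k - n_{i-1}^k$ dwarf the $\pm 1$ coming from $s$), the order $<$ on $X_s\cap U^1(a_0)$ agrees with the index order. Thus I can write $x = x_{s,n_a^k - s(a)}$, $y = x_{s,n_b^k - s(b)}$ with $a<b$, and similarly $z = x_{s',n_c^{k'} - s'(c)}$, $w = x_{s',n_d^{k'} - s'(d)}$ with $c<d$, so that
\[
I(x,y) = (n_b^k - n_a^k) - (s(b)-s(a)), \qquad I(z,w) = (n_d^{k'} - n_c^{k'}) - (s'(d)-s'(c)).
\]

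The heart of the argument is that the iterative distance essentially recovers the larger parameter. Using $0\leq n_a^k \leq n_{b-1}^k < n_b^k/100$ (from the defining growth $n_{q+1}^k>100 n_q^k$ and Proposition \ref{monotonically increasing jump numbers}) together with $|s(b)-s(a)|\leq 1$, I would establish $I(x,y)\in(\tfrac{99}{100}n_b^k - 1,\, n_b^k+1)$, and likewise $I(z,w)\in(\tfrac{99}{100}n_d^{k'} - 1,\, n_d^{k'}+1)$. Now the full collection $\{n_q^k : k\in\mathbb{N},\ 1\leq q\leq k\}$, once sorted, increases by a factor exceeding $100$ at every step (within a block by $n_{q+1}^k>100n_q^k$, across blocks by $n_1^{k+1}>100 n_k^k$), so its values are distinct and any two distinct ones differ by a factor of at least $100$. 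Hence the two intervals above can contain the common value $I(x,y)=I(z,w)$ only if $n_b^k = n_d^{k'}$, which forces $k=k'$ and $b=d$. Substituting back into the displayed equalities yields $|n_a^k - n_c^{k}|\leq 2$, and the same growth estimate (namely $n_a^k - n_c^k > 99 n_c^k > 2$ if $a>c\geq 1$, and $n_a^k\geq n_1^k>2$ if $c=0$) forces $a=c$. With $k=k'$, $a=c$, $b=d$ in hand, $x$ and $z$ share the index $a$ while $y$ and $w$ share the index $b$, which is precisely the definition of similar positions.

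The step I expect to require the most care is the characterization of $X_s\cap U^1(a_0)$: it is tempting to read it straight off Lemma \ref{lemma 1}, but that lemma is stated for $U^k(a_0)$, so one must genuinely use the nesting $U^{k'}(a_0)\subseteq U^k(a_0)$ across levels together with the level-one disjointness of the $U^1(a_m)$ to rule out stray tail points and the accumulation point $a_\infty$. Once the index set is correctly identified, the remaining separation estimates are routine consequences of the factor-$100$ growth of the $n_q^k$.
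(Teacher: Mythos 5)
Your proposal is correct and follows essentially the same route as the paper: identify the points of $X_s\cap U^1(a_0)$ via Lemma \ref{lemma 1} with $j=0$, then exploit the factor-$100$ growth of the $n_q^k$ to force the indices to coincide; your interval-separation step (placing $I(x,y)$ in $[\tfrac{99}{100}n_b^k-1,\,n_b^k+1]$ and noting that the sorted family $\{n_q^k\}$ grows by a factor exceeding $100$ at each step) simply consolidates the paper's three successive contradiction arguments (first $k=k'$, then the larger indices match, then the smaller ones) into a single comparison, with the same conclusion. You are also right that the passage from $U^1(a_0)$ to $U^k(a_0)$ deserves justification: the paper leaves it implicit in this proof (it is precisely the content of Lemma \ref{lemma 3}, which is stated only afterwards), whereas your level-one disjointness argument, including the check on the tail point $x_{s,-1}$ when $s(0)=1$, closes that small gap explicitly.
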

	\begin{proof}
		By Lemma \ref{lemma 1} since $x < y$ and $z< w$ for some $0\leq i < j \leq k$ and $0\leq r < p \leq k'$
		\begin{align*}
			x &= x_{s,n_i^{k}-s(i)},\\
			y&= x_{s,n_j^{k}-s(j)},\\
			z&= x_{s',n_r^{k'}-s'(r)},\\
			w &= x_{s',n_p^{k'}-s'(p)}.  
		\end{align*}
		As $i < j$ and $r < p,$ we have that $0\leq n_{i}^{k}< n_{j}^{k}\leq n_{k}^{k}$ and $0 \leq n_{r}^{k'} < n_{p}^{k'}\leq n_{k'}^{k'}.$ The iterative distances between $x,y$ and $z,w$ are equal, so we have that 
		\begin{align*}
			n_{p}^{k'} - s'(p) - n_{r}^{k'} + s'(r) = n_{j}^{k}-s(j) - n_{i}^{k} + s(i). 
		\end{align*}
		Assume $k'  > k$ and $p > 1.$ By our construction $n_{p}^{k'} > 100\times n_{p-1}^{k'}$, $n_{p-1}^{k'}\geq n_{r}^{k'}$ and $n_{p-1}^{k'} > n_{j}^{k}$ since $k' > k.$ Thus, 
		\begin{align*}
			n_{p}^{k'} - s'(p) - n_{r}^{k'} + s'(r) &> 99\times n_{p-1}^{k'} - s'(p) + s'(r)\\
			&> 98\times n_{p-1}^{k'} + n_{j}^{k} - s'(p) + s'(r)\\
			&> n_{j}^{k} - s(j) - n_{i}^{k} + s(i),
		\end{align*}
		and we arrive to a contradiction. If $p = 1$ and $k' > k,$ then $r=0$ and by our construction $n_{1}^{k'} > 100\times n_{k}^{k}$ and $n_{k}^{k}\geq n_{j}^{k}.$ Thus, 
		\begin{align*}
			n_p^{k'} - s'(p) - n_{r}^{k'} + s'(r) &= n_{1}^{k'} - s'(1) + s'(0)\\
			&> 100\times n_{k}^{k} - s'(1) + s'(0)\\
			&\geq 99\times n_{k}^{k} + n_{j}^{k} - s'(1) + s'(0)\\
			&> n_{j}^{k} - s(j) - n_{i}^{k} + s(i),
		\end{align*}
		and we arrive to a contradiction. If $k' < k$, we will arrive to a similar contradiction; therefore, $k = k'.$ \\
		\indent Next, we show that $y$ and $w$ must be in similar positions. Assume $p > j,$ then $n_{p}^{k} > n_{j}^{k}$. By our construction, $n_{p}^{k} > 100\times n_{p-1}^{k}$ and $n_{p-1}^{k}\geq \max\{n_{j}^{k},n_{r}^{k}\}.$ Therefore, 
		\begin{align*}
			n_{p}^{k} - s'(p) - n_{r}^{k} + s'(r) &> 99\times n_{p-1}^{k} - s'(p) + s'(r)\\
			&\geq 98\times n_{p-1}^{k} + n_{j}^{k} - s'(p) + s'(r)\\
			&>n_{j}^{k} - s(j) - n_{i}^{k}+s(i).
		\end{align*}
		Since our iterative distance between $x,y$ is the same as the iterative distance between $z,w$ we result in a contradiction if $p > j.$ Similarly, if $j > p$ we result in the same contradiction. Therefore, $y$ and $w$ must be in similar positions, and $w = x_{s',n_{j}^{k}-s'(j)}.$ To show that $x,z$ are in similar positions assume for contradiction that $i > r.$ As $I(z,w) = I(x,y)$ we have that 
		\begin{align*}
			n_{j}^{k}-n_{r}^{k} -s'(j)+s'(r) = n_{j}^{k} - n_{i}^{k}-s(j)+s(i).
		\end{align*} 
		This implies that 
		\begin{align*}
			n_{i}^{k} - n_{r}^{k} = (s'(j) -s'(r)) - (s(j)-s(i)) \in \{1,2\}
		\end{align*}
		since $n_{i}^{k} > n_{r}^{k}.$ This leads to a contradiction, since $n_{i}^{k}$ is significantly larger than $n_{r}^k.$ We result in a similar contradiction if one assumes that $r > i;$ therefore, $i = r$ and $x,z$ are in similar positions. 
	\end{proof}
	\begin{lemma}\label{lemma 3}
		Let $k,k'\in \mathbb{N}$ such that $k' > k$ and $j\in \mathbb{Z}.$ If $s\in F(k'),$ then 
		\begin{align*}
			X_{s}\cap U^{k}(a_j) = X_{s}\cap U^{k'}(a_j).
		\end{align*}
	\end{lemma}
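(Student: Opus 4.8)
The plan is to prove the nontrivial inclusion $X_s \cap U^k(a_j) \subseteq X_s \cap U^{k'}(a_j)$, since the reverse inclusion is immediate: applying property $(3)$ of the neighborhoods $k'-k$ times gives $U^{k'}(a_j) \subseteq U^k(a_j)$, whence $X_s \cap U^{k'}(a_j) \subseteq X_s \cap U^k(a_j)$. Thus everything reduces to showing that no point of $X_s$ can sit in $U^k(a_j)$ without already sitting in the smaller set $U^{k'}(a_j)$.

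The key observation I would isolate is that every point $x_{s,i}$ of $X_s$ has a single well-defined \emph{designated} finite-index neighborhood at level $k'$. For $0 \le i \le n_{k'}^{k'}$ the construction of $\{x_{s,i}\}$ places $x_{s,i}$ explicitly in some $U^{k'}(a_{m(i)})$; for $i = n_{k'}^{k'}+r$ with $r \ge 1$ we have $x_{s,i} \in U^{k'+r}(a_{s(k')+r}) \subseteq U^{k'}(a_{s(k')+r})$ by nesting, and symmetrically for $i = -r$, so in every case $x_{s,i} \in U^{k'}(a_{m(i)})$ for a well-defined integer $m(i)$.

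With $m(i)$ in hand the conclusion follows from disjointness. By property $(2)$ the neighborhoods $\{U^{k'}(a_m)\}_{m \in \mathbb{Z}}$ are pairwise disjoint, so $x_{s,i} \in U^{k'}(a_j)$ precisely when $j = m(i)$. Since $U^{k'}(a_{m(i)}) \subseteq U^k(a_{m(i)})$ and the level-$k$ neighborhoods $\{U^{k}(a_m)\}_{m\in\mathbb{Z}}$ are likewise pairwise disjoint, $x_{s,i} \in U^k(a_j)$ also holds precisely when $j = m(i)$. Hence for finite $j$ both $X_s \cap U^k(a_j)$ and $X_s \cap U^{k'}(a_j)$ equal $\{x_{s,i} : m(i) = j\}$, giving the desired equality. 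The remaining point $a_\infty \in X_s$ lies in no finite-index neighborhood: for $|j| \le n_k^k$ this is forced by $U^k(a_j) \cap U^k(a_\infty) = \emptyset$ together with $a_\infty \in U^k(a_\infty)$, and for $|j| > n_k^k$ the neighborhoods may be chosen to exclude the single point $a_\infty$, so $a_\infty$ contributes to neither side.

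The main obstacle is the bookkeeping in the second step: one must verify that the designated index $m(i)$ is genuinely well-defined for \emph{every} $i \in \mathbb{Z}$ (including the regions governed by the jump numbers and the tail regions $i > n_{k'}^{k'}$, $i < 0$), and that the point $a_\infty$ is handled correctly. Once this is in place, the argument is purely a matter of combining the disjointness property $(2)$ with the nesting property $(3)$, and no estimates on the jump numbers are needed.
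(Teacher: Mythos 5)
Your proof is correct and takes essentially the same route as the paper's: the easy inclusion follows from the nesting property $(3)$, and the substantive inclusion is obtained by noting that, since $s\in F(k')$, every point $x_{s,i}$ of $X_s$ lies in a unique level-$k'$ neighborhood $U^{k'}(a_{m(i)})$, so pairwise disjointness (property $(2)$, combined with nesting) forces $x_{s,i}\in U^{k}(a_j)$ exactly when $j=m(i)$, hence exactly when $x_{s,i}\in U^{k'}(a_j)$. Your explicit verification that $m(i)$ is well defined on the tail regions and that $a_\infty$ contributes to neither side is a point of care the paper's proof leaves implicit, but it is a refinement of the same argument rather than a different one.
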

	\begin{proof}
		As $k' > k$, we see immediately that $X_{s}\cap U^{k'}(a_j)\subseteq X_{s}\cap U^{k}(a_j).$ For the reverse direction, let $x\in X_{s}\cap U^{k}(a_j)$ then $x = x_{s,i}$ for some $i\in \mathbb{Z}.$ By the selection of $\{x_{s,i}\}_{i\in \mathbb{Z}},$ since $s\in F(k')$ we have that $x_{s,i}\in U^{k'}(a_r)$ for some $r\in \mathbb{Z}.$ As $U^{k'}(a_r)\cap U^{k}(a_j) = \emptyset$ unless $j=r,$ we see that $x_{s,i}\in U^{k'}(a_j)$, $X_{s}\cap U^{k}(a_j)\subseteq X_{s}\cap U^{k'}(a_j),$ and our equality follows.  
	\end{proof}
	\begin{lemma}\label{the key lemma}
		Let $i,j\in \mathbb{Z},$ then $(a_i,a_j)\notin IN_{1}\left((X_1)_{\mathbb{Z}}^{G},G,T^{\mathbb{Z}\backslash G}\right).$ 
	\end{lemma}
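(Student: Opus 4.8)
The plan is to first convert the statement about $IN_1$ into a purely combinatorial statement about how often a single orbit can visit a fixed neighbourhood. Since $[G:\mathbb{Z}]=2$ we have $(X_1)_{\mathbb{Z}}^{G}=X_1\times X_1$, and writing a point as $(u,v)$ (so $u=f(0)$, $v=f(x)$) the action computed above reads $T^{\mathbb{Z}\backslash G}_{(n,0)}(u,v)=(T^{n}u,T^{-n}v)$ and $T^{\mathbb{Z}\backslash G}_{(n,x)}(u,v)=(T^{n}v,T^{-n}u)$. For a single point (a $1$-tuple) a finite $F\subseteq G$ is an independence set for a neighbourhood $W$ exactly when some point has its whole $F$-orbit inside $W$; therefore $(a_i,a_j)\in IN_1$ iff for every neighbourhood $W$ one has $\sup_{(u,v)}\left|\{g\in G:T^{\mathbb{Z}\backslash G}_{g}(u,v)\in W\}\right|=+\infty$. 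So it suffices to exhibit one neighbourhood $W$ of $(a_i,a_j)$ for which this supremum is finite.

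Next I would fix $W=U^{k}(a_i)\times U^{k}(a_j)$ with $k$ chosen large. Using the bound $\tfrac{n_1^{k'}}{2}-\tfrac{k'}{2}\le j_{s,1}^{k'}$ (and its analogue for $p_{s,1}^{k'}$) from the proof of Proposition \ref{monotonically increasing jump numbers}, the jump numbers $j^{k'}_{s,1},p^{k'}_{s,1}$ exceed $\max\{|i|,|j|\}+1$ uniformly in $s\in F(k')$ as soon as $k'\ge k$; this is what makes Lemma \ref{lemma 1} applicable to the neighbourhoods of $a_i$ and $a_j$. I then split the counting set according to the two types of group elements: $(n,0)$ contributes iff $T^{n}u\in U^{k}(a_i)$ and $T^{-n}v\in U^{k}(a_j)$, and $(n,x)$ contributes iff $T^{n}v\in U^{k}(a_i)$ and $T^{-n}u\in U^{k}(a_j)$.

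The heart of the argument is estimating the $(n,0)$-set $A=\{n:T^{n}u\in U^{k}(a_i),\ T^{-n}v\in U^{k}(a_j)\}$ (the $(n,x)$-set is symmetric). In the principal case $u=x_{s,m}$, $v=x_{s',m'}$ with $s\in F(k')$, $s'\in F(k'')$ and $k',k''\ge k$, Lemma \ref{lemma 3} passes from $U^{k}$ to $U^{k'}$ and $U^{k''}$, and Lemma \ref{lemma 1} then gives
\[ A\subseteq\{\,n_q^{k'}-s(q)+i-m:q=0,\dots,k'\,\}\cap\{\,m'-n_q^{k''}+s'(q)-j:q=0,\dots,k''\,\}. \]
For any three elements $f_1<f_2<f_3$ of the first set the super-increasing growth $n_{q+1}>100\,n_q$ forces $f_2-f_1<f_3-f_2$, whereas for the second set (whose values decrease in $q$) the same growth forces $f_2-f_1>f_3-f_2$; hence the two sets share at most two points and $|A|\le 2$. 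This opposite monotonicity is imposed by the $\mathbb{Z}/2\mathbb{Z}$-reflection, which turns $T^{n}$ in the first coordinate into $T^{-n}$ in the second, and this coupling of the two coordinates is exactly the rigidity already exploited in Lemma \ref{lemma 2}.

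Finally I would clear away the degenerate configurations. If $u$ or $v$ equals some $a_\ell$, its orbit meets $U^{k}(a_i)$ (resp.\ $U^{k}(a_j)$) at most once; if it equals $a_\infty$ the corresponding intersection is empty; and if it lies in some $X_s$ with $s\in\bigcup_{k'<k}F(k')$ there are only finitely many such $s$, so the number of visits is at most a constant $C=C(k)$. Since $A$ and its symmetric partner each require both a condition on $u$ and a condition on $v$, a single degenerate coordinate already caps the count; combining with the principal case, every visiting set has size at most $2\max\{2,C\}$, which yields the finite supremum and the conclusion. I expect the genuine obstacle to be the case analysis and its uniformity—verifying that both monotone descriptions hold simultaneously and that the ``small $X_s$'' bound is independent of the point—rather than the monotonicity estimate, which is immediate from the super-increasing choice of the $n^{k}_q$.
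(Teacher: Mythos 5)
Your proof is correct, but it takes a genuinely different route from the paper's. The paper first invokes Proposition \ref{finite index subgroup maximal pattern entropy IN-tuple lemma} to reduce to independence sets inside $\mathbb{Z}$, translates by $T^{\mathbb{Z}\backslash G}_{(-i,0)}$ to normalize to $(a_0,a_j)$, and then derives a contradiction from any independence set $\{\ell_1<\dots<\ell_n\}$ with $n>k+1$: the visits force the two orbits onto some $X_{s'},X_{s''}$ with $k',k''>k$, and two applications of the similar-positions rigidity (Lemma \ref{lemma 2}) to the pairs $(\ell_1,\ell_2)$ and $(\ell_1,\ell_3)$ pin $T^{\ell_2}(x)$ and $T^{\ell_3}(x)$ to the same position, a contradiction. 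You instead bypass both Proposition \ref{finite index subgroup maximal pattern entropy IN-tuple lemma} and Lemma \ref{lemma 2}: you treat the two cosets of $\mathbb{Z}$ in $G$ directly and prove the stronger, quantitative statement that for $W=U^{k}(a_i)\times U^{k}(a_j)$ with $k$ large the visiting set $\{g\in G: T^{\mathbb{Z}\backslash G}_g(u,v)\in W\}$ is uniformly bounded over all $(u,v)$. Your key mechanism is the same rigidity the paper encodes in Lemma \ref{lemma 2} --- the forward orbit in one coordinate against the backward orbit in the other, coupled by the $\mathbb{Z}/2\mathbb{Z}$ reflection --- but you extract it in a more elementary form: by Lemmas \ref{lemma 3} and \ref{lemma 1} the visiting times lie in the intersection of the set $\{n_q^{k'}-s(q)+i-m\}$, whose consecutive gaps strictly increase by the super-increasing choice $n_{q+1}^{k}>100\,n_q^{k}$ (note $q_2\geq 1$ in any increasing triple, so $n_{q_2}^{k'}\geq 9$ and the estimate $99\,n_{q_2}^{k'}-1>n_{q_2}^{k'}+1$ goes through), and the set $\{m'-n_q^{k''}+s'(q)-j\}$, whose gaps strictly decrease; hence at most two common elements per coset type. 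Your degenerate cases are also handled correctly: for $u=a_\ell$ the pairwise disjointness of the $U^k(a_r)$ gives at most one visit, for $u=a_\infty$ none, and for $s\in\bigcup_{k'<k}F(k')$ the set $X_s\cap U^k(a_i)$ is finite (its only accumulation point $a_\infty$ lies outside the closed set $U^k(a_i)$ once $n_k^k\geq\max\{|i|,|j|\}$), with a bound depending only on $s$, of which there are finitely many; and since each coset type requires a condition on both coordinates, one degenerate coordinate caps the count as you say. What each approach buys: the paper's proof is shorter given its lemma infrastructure and reuses Lemma \ref{lemma 2}, which is also needed conceptually elsewhere; yours avoids the similar-positions machinery entirely, needs no translation normalization (your uniform choice of $k$ via the bounds in Proposition \ref{monotonically increasing jump numbers} replaces it), and yields an explicit uniform bound on orbit visits rather than a bare contradiction, which in particular shows directly that these points fail to be $IT$-style recurrent for $W$ in a quantitative way.
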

	\begin{proof}
		Assume for contradiction there are $i,j\in \mathbb{Z}$ such that $$(a_i,a_j)\in IN_{1}\left((X_1)_{\mathbb{Z}}^{G},G,T^{\mathbb{Z}\backslash G}\right).$$ By Proposition \ref{finite index subgroup maximal pattern entropy IN-tuple lemma} $$IN_{1}\left((X_1)_{\mathbb{Z}}^{G},G,T^{\mathbb{Z}\backslash G}\right) = IN_{1}\left((X_1)_{\mathbb{Z}}^{G},\mathbb{Z},T^{\mathbb{Z}\backslash G}\rvert_{\mathbb{Z}}\right).$$ Therefore, we can merely consider independence sets contained within $\mathbb{Z}.$ By Theorem \ref{properties of IN-tuples}(3), $$(a_0,a_{j+i})=T^{\mathbb{Z}\backslash G}_{(-i,0)}(a_{i},a_{j}) \in IN_1\left((X_1)_{\mathbb{Z}}^{G},\mathbb{Z},T^{\mathbb{Z}\backslash G}\rvert_{\mathbb{Z}}\right).$$ 
		Therefore, we can assume without loss of generality that $i=0$ and $j\in \mathbb{Z}.$ By Proposition \ref{monotonically increasing jump numbers} there exists a $k\in \mathbb{N}$ such that $|j|\leq \min\{j_{s,1}^{k}-1,p_{s,1}^{k}-1\}$ for all $s\in F(k).$ The open set $$U = (U^{k}(a_0)\cap X_1)\times (U^{k}(a_j)\cap X_1)$$ is an open neighborhood of $(a_0,a_j).$ As $(a_0,a_j)\in IN_{1}\left((X_1)_{\mathbb{Z}}^{G},\mathbb{Z},T^{\mathbb{Z}\backslash G}\rvert_{\mathbb{Z}}\right)$ there exists an independence set of $U$,  $\{\ell_1,\dots,\ell_n\},$ where $\ell_{i} < \ell_{i+1}$ for $i=1,\dots,n-1$ and $n > k+1.$ By independence, there are $x,y\in X_1$ such that 
		\begin{align*}
			&T^{\ell_1}(x),\dots,T^{\ell_n}(x)\in U^{k}(a_0)\cap X_1,\\
			&T^{-\ell_1}(y),\dots,T^{-\ell_n}(y)\in U^{k}(a_j)\cap X_1.
		\end{align*}
		The orbits of both $x$ and $y$ return to $U^{k}(a_0)\cap X_1,U^{k}(a_j)\cap X_1$ at least $n$ times, respectively; therefore there are natural numbers $k',k''\geq n-1$, $s'\in F(k'),$ and $s''\in F(k'')$ such that $\text{orb}(x) = X_{s'}\backslash \{a_\infty\}$ and $\text{orb}(y) = X_{s''}\backslash \{a_{\infty}\}.$ Since $k',k'' > k,$ by Lemma \ref{lemma 3}
		\begin{align*}
			&T^{\ell_1}(x),\dots,T^{\ell_n}(x)\in U^{k'}(a_0)\cap X_{s'},\\
			&T^{-\ell_1}(y),\dots,T^{-\ell_n}(y)\in U^{k''}(a_j)\cap X_{s''}.
		\end{align*}
		Since $k'' > k$ by Proposition \ref{monotonically increasing jump numbers} we have that $|j| \leq \min\{j_{s'',1}^{k''}-1,p_{s'',1}^{k''}-1\}$. Therefore, by Lemma \ref{lemma 1} it follows that $T^{-\ell_{i}}(y') \in U^{k''}(a_0)\cap X_{s''}$ for $i=1,\dots, n$ where $y' = T^{-j}(y).$ Thus, we have $x,y'\in X_1$ such that 
		\begin{align*}
			&T^{\ell_1}(x),\dots,T^{\ell_n}(x)\in U^{k'}(a_0)\cap X_{s'}\subseteq U^{1}(a_0)\cap X_{s'},\\
			&T^{-\ell_1}(y'),\dots,T^{-\ell_n}(y')\in U^{k''}(a_0)\cap X_{s''}\subseteq U^{1}(a_0)\cap X_{s''}.
		\end{align*}
		In terms of iterative distance, we have that $T^{\ell_1}(x) < T^{\ell_2}(x)$, $T^{-\ell_2}(y')<T^{-\ell_1}(y'),$ and $$I(T^{\ell_1}(x),T^{\ell_2}(x)) = \ell_2 - \ell_1 = I(T^{-\ell_2}(y'),T^{-\ell_1}(y')).$$ It follows from Lemma \ref{lemma 2} that $k'=k''$ and $T^{\ell_2}(x)$, $T^{-\ell_1}(y')$ are in similar positions. Furthermore, $T^{\ell_1}(x) < T^{\ell_3}(x)$, $T^{-\ell_3}(y')<T^{-\ell_1}(y'),$ and  $$I(T^{\ell_1}(x),T^{\ell_3}(x)) = \ell_3 - \ell_1 =  I(T^{-\ell_3}(y'),T^{-\ell_1}(y')).$$ Therefore, $T^{\ell_3}(x)$, $T^{-\ell_1}(y')$ are in similar positions as well by another application of Lemma \ref{lemma 2}. As $\ell_2 < \ell_3$ this leads to a contradiction. Thus, $U$ cannot have an independence set of length $n > k+1$ and $(a_0,a_j)\notin  IN_{1}\left((X_1)_{\mathbb{Z}}^{G},G,T^{\mathbb{Z}\backslash G}\right).$
	\end{proof}
	\begin{theorem}\label{co-induction counter-example calculation}
		$h_{top}^{*}\left((X_1)_{\mathbb{Z}}^{G},G,T^{\mathbb{Z}\backslash G}\right) = \log(2).$ 
	\end{theorem}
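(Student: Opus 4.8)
The plan is to sandwich the value between $\log(2)$ and $\log(2)$. The lower bound is immediate: the general inequality established at the beginning of Section \ref{maximal pattern entropy section} gives $h_{top}^{*}\left((X_1)_{\mathbb{Z}}^{G},G\right)\geq h_{top}^{*}(X_1,\mathbb{Z})=\log(2)$, using Theorem \ref{t.d.s. with log(2) maximal pattern entropy}. By Theorem \ref{local maximal pattern entropy} it then suffices to establish the reverse inequality by showing that no intrinsic $IN$-triple of three distinct points exists, that is, $IN_{3}^{e}\left((X_1)_{\mathbb{Z}}^{G},G\right)=\emptyset$.

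First I would pin down the possible shape of a triple in $IN_{3}(X_1,\mathbb{Z})$. Since $h_{top}^{*}(X_1,\mathbb{Z})=\log(2)$, Theorem \ref{local maximal pattern entropy} forces $IN_{3}^{e}(X_1,\mathbb{Z})=\emptyset$, so no member of $IN_{3}(X_1,\mathbb{Z})$ has three distinct coordinates. Moreover, directly from the definition of an independence set, an independence set for a $3$-tuple is an independence set for each of its $2$-subtuples; hence every pair of coordinates of a member of $IN_{3}(X_1,\mathbb{Z})$ lies in $IN_{2}(X_1,\mathbb{Z})$, and any non-diagonal such pair lies in $IN_{2}^{e}(X_1,\mathbb{Z})=\{(a_i,a_{i+1}),(a_{i+1},a_i):i\in\mathbb{Z}\}$ by Theorem \ref{t.d.s. with log(2) maximal pattern entropy}. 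Consequently any $(y_1,y_2,y_3)\in IN_{3}(X_1,\mathbb{Z})$ takes at most two distinct values, and when it takes exactly two they must be a pair of adjacent circle points $a_p,a_{p+1}$.

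Now suppose toward a contradiction that $(f_1,f_2,f_3)\in IN_{3}^{e}\left((X_1)_{\mathbb{Z}}^{G},G\right)$, and write $f_m=(u_m,v_m)$ with $u_m=f_m(0)$ and $v_m=f_m(x)$ under the identification $(X_1)_{\mathbb{Z}}^{G}=X_1\times X_1$ coming from the two cosets. Since $[G:\mathbb{Z}]=2$, Lemma \ref{IN-tuples in the finite index case} yields $(u_1,u_2,u_3)\in IN_{3}(X_1,\mathbb{Z})$ and $(v_1,v_2,v_3)\in IN_{3}(X_1,\mathbb{Z})$, so by the previous paragraph each coordinate-triple takes at most two distinct values. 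If either coordinate-triple were constant, then the three pairs $(u_m,v_m)$ would take at most two distinct values and could not be pairwise distinct; hence both coordinate-triples take exactly two values, and those values are adjacent circle points. In particular every $u_m$ and every $v_m$ is a circle point, so each $f_m$ has the form $(a_{i_m},a_{j_m})$. Finally, restricting an independence set of the $3$-tuple to its $m$-th coordinate shows $f_m\in IN_{1}\left((X_1)_{\mathbb{Z}}^{G},G\right)$ for each $m$, which contradicts Lemma \ref{the key lemma}. Therefore $IN_{3}^{e}\left((X_1)_{\mathbb{Z}}^{G},G\right)=\emptyset$, and combined with the lower bound this gives $h_{top}^{*}\left((X_1)_{\mathbb{Z}}^{G},G\right)=\log(2)$.

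The bulk of the difficulty has already been absorbed into Lemma \ref{the key lemma}, whose proof carries the delicate combinatorics of the jump numbers and similar positions. Granting that lemma, the main obstacle in the present argument is the bookkeeping that forces each $f_m$ to be a pair of circle points: this is precisely the pigeonhole step ruling out a constant coordinate-triple, together with the observation that $IN$-tuples pass to subtuples (used both to constrain $IN_{3}(X_1,\mathbb{Z})$ and to pass from the triple to the singletons $f_m\in IN_1$). Once each $f_m$ is seen to be a pair of circle points, the key lemma applies immediately and closes the argument.
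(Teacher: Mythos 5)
Your proposal is correct and follows essentially the same route as the paper's proof: the lower bound via the factor-map inequality from Section \ref{maximal pattern entropy section} together with Theorem \ref{t.d.s. with log(2) maximal pattern entropy}, and the upper bound by showing $IN_{3}^{e}\left((X_1)_{\mathbb{Z}}^{G},G\right)=\emptyset$ using Lemma \ref{IN-tuples in the finite index case}, the structure of $IN_2^{e}(X_1,\mathbb{Z})$, a pigeonhole step ruling out a constant coordinate-triple, and finally Lemma \ref{the key lemma}. The only cosmetic difference is that you make explicit the passage of $IN$-tuples to subtuples (and to the singletons $f_m\in IN_1$), which the paper uses implicitly.
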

	\begin{proof}
		By Theorem \ref{t.d.s. with log(2) maximal pattern entropy} and the discussion at the beginning of Section \ref{maximal pattern entropy finite index subsection} $$\log(2) = h_{top}^{*}(X_1,\mathbb{Z})\leq h_{top}^{*}\left((X_1)_{\mathbb{Z}}^{G},G\right).$$ To prove the reverse direction we will show that $IN_{3}^{e}\left((X_1)_{\mathbb{Z}}^{G},G,T^{\mathbb{Z}\backslash G}\right) = \emptyset.$ Assume for contradiction that $(f_1,f_2,f_3)\in IN_{3}^{e}\left((X_1)_{\mathbb{Z}}^{G},G,T^{\mathbb{Z}\backslash G}\right).$ By Lemma \ref{IN-tuples in the finite index case}, it follows that $$(f_1(0),f_2(0),f_3(0)),(f_1(x),f_2(x),f_3(x))\in IN_{3}(X_1,\mathbb{Z}).$$ By Theorem \ref{t.d.s. with log(2) maximal pattern entropy} it follows that $IN_{3}^{e}(X_1,\mathbb{Z}) = \emptyset.$  Therefore, $f_1(0),f_2(0),f_3(0)$ cannot be all distinct nor can they all be equal. For if they were all equal then $f_1(x),f_2(x),f_3(x)$ would have to be all distinct for $f_1,f_2,f_3$ to be distinct points in $(X_1)_{\mathbb{Z}}^{G}.$ Thus, we conclude that $\{f_1(0),f_2(0),f_{3}(0)\} = \{x_1,x_2\}$ and $\{f_1(x),f_2(x),f_3(x)\} = \{y_1,y_2\}$ where $(x_1,x_2),(y_1,y_2)\in IN_{2}^{e}(X_1,\mathbb{Z}).$ By Theorem \ref{t.d.s. with log(2) maximal pattern entropy} we have that $\{x_1,x_2\} = \{a_i,a_{i+1}\}$ and $\{y_1,y_2\}=\{a_j,a_{j+1}\}$ for some $i,j\in \mathbb{Z}.$ Without loss of generality assume that $x_1 = a_i,x_2=a_{i+1},y_1 = a_{j},$ and $y_{2} = a_{j+1}.$ This implies that $$f_1,f_2,f_3\in \{(a_{i},a_{j}),(a_{i},a_{j+1}),(a_{i+1},a_{j}),(a_{i+1},a_{j+1})\}.$$ However, by Lemma \ref{the key lemma} it follows that $f_1,f_2,f_3\notin IN_{1}\left((X_1)_{\mathbb{Z}}^{G},G,T^{\mathbb{Z}\backslash G}\right)$ and $(f_1,f_2,f_3)$ cannot be an $IN$-tuple. This leads us to a contradiction; therefore, $IN_{3}^{e}\left((X_1)_{\mathbb{Z}}^{G},G,T^{\mathbb{Z}\backslash G}\right) = \emptyset$. By Theorem \ref{local maximal pattern entropy} it follows that $h_{top}^{*}\left((X_1)_{\mathbb{Z}}^{G},G\right) = \log(2)$ as desired. 
	\end{proof}		
	\begin{remark}
		\normalfont In our proof, we used the fact that $$\log(2) = h_{top}^{*}(X_1,\mathbb{Z})\leq h_{top}^{*}\left((X_1)_{\mathbb{Z}}^{G},G\right).$$ This implies that $IN_{2}^{e}\left((X_1)_{\mathbb{Z}}^{G},G,T^{\mathbb{Z}\backslash G}\right)\neq \emptyset,$ but it does not gives us any information about the intrinsic IN-pair that exists. As $a_{\infty}$ is a fixed-point of $T: X_1\to X_1,$ then $\mathbb{Z}$ is an infinite independence set for any open neighborhood about $a_{\infty}$ in $X_1.$ Using this fact, it is easy to see that if we define $f_1 = (a_0,a_{\infty})$ and $f_2 = (a_1,a_{\infty})$ that $(f_1,f_2)\in IN_{2}^{e}\left((X_1)_{\mathbb{Z}}^{G},G,T^{\mathbb{Z}\backslash G}\right).$ 
	\end{remark}
	\subsection{Maximal Pattern Entropy of co-Induced Systems: $[G:H] = +\infty$}\label{maximal pattern entropy infinite index subsection}
	\indent In this section, we will compute the maximal pattern entropy of co-induced systems when $[G:H] = +\infty$. We will show if $[G:H]=+\infty,$ then $IN_k\left(X_{H}^{G},G\right) = \left(X_{H}^{G}\right)^{k}$ for all $k\in \mathbb{N}$ and $h_{top}^{*}(X_{H}^{G},G) = +\infty.$ We will start this discussion by stating and proving some results. 
	\begin{proposition}[Neumann, Proposition 4.2 \cite{Neumann_group_lemma}]\label{BH Neumman Result}
		Let the group $G$ be the union of finitely many, let us say $n$, cosets of subgroups $C_1,C_2,\dots,C_n:$ 
		\begin{align*}
			G=\bigcup_{i=1}^{n}C_ig_i,
		\end{align*}
		then at least one subgroup $C_i$ has finite index in $G$.
	\end{proposition}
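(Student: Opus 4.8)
The plan is to prove the statement by induction on the number $d$ of \emph{distinct} subgroups occurring among $C_1,\dots,C_n$, working throughout with right cosets $C_i g_i = \{c g_i : c \in C_i\}$. The guiding principle is that cosets of an infinite-index subgroup are redundant: if a subgroup $H$ contributes only finitely many cosets to the cover, those cosets can be eliminated in favor of cosets of the remaining subgroups, thereby reducing $d$ by one. The base case $d=1$ is immediate, since then every $C_i$ equals a single subgroup $C$, so $G = \bigcup_{i=1}^{n} C g_i$ is a union of at most $n$ right cosets of $C$, whence $[G:C]\le n < +\infty$.

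For the inductive step, I would fix one subgroup $H$ appearing in the cover and set $S = \{i : C_i = H\}$. First I would dispose of the easy case: if $\bigcup_{i\in S} H g_i = G$, then $[G:H]\le |S|\le n$ is finite and we are finished. Otherwise I use that the right cosets of $H$ partition $G$ and that $\bigcup_{i\in S} H g_i$ is a union of finitely many of these blocks; its complement is therefore a nonempty union of $H$-cosets, so I may choose $y\in G$ with $Hy \subseteq \bigcup_{i\notin S} C_i g_i$.

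The crucial step is then a right-translation argument that removes $H$ from the cover. For each $j\in S$ and each $h\in H$, writing $h g_j = (hy)(y^{-1}g_j)$ and using $hy\in Hy\subseteq \bigcup_{i\notin S} C_i g_i$ yields the inclusion $H g_j \subseteq \bigcup_{i\notin S} C_i (g_i y^{-1} g_j)$. Substituting these back into $G = \bigcup_{i\in S} H g_i \cup \bigcup_{i\notin S} C_i g_i$ exhibits $G$ as a finite union of right cosets of subgroups drawn entirely from $\{C_i : i\notin S\}$, a family containing at most $d-1$ distinct subgroups. The induction hypothesis then produces some $C_i$ with $i\notin S$ of finite index in $G$, and this is one of the original subgroups, completing the argument.

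I expect the main obstacle to be the passage that manufactures the coset $Hy$ lying in the complement, together with the verification that the translated inclusions genuinely re-cover all of $G$. This is precisely where the coset-partition structure and the deliberate use of right (rather than left) cosets must be handled with care, since the translations act on the right and must preserve the right-coset form $C_i(\,\cdot\,)$; once this bookkeeping is correct, the reduction in the distinct-subgroup count $d$ drives the induction cleanly to its conclusion.
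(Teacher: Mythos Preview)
The paper does not supply its own proof of this proposition; it is quoted from Neumann's paper \cite{Neumann_group_lemma} and used as a black box to derive Lemma~\ref{the key to proving my theorem involving infinite index subgroup actions}. Your argument is correct and is in fact the classical proof: induct on the number of distinct subgroups, and if one of them, say $H$, does not already cover $G$ by its cosets in the list, pick a coset $Hy$ lying entirely inside the union of the remaining cosets and right-translate to replace every $Hg_j$ by finitely many cosets of the other subgroups. The bookkeeping you flag---that $hg_j=(hy)(y^{-1}g_j)$ lands in $C_i(g_iy^{-1}g_j)$, which is again a right coset of $C_i$---is exactly the point, and you have it right.
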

	We have the following consequence of Proposition \ref{BH Neumman Result} that will be important in proving Theorem \ref{infinite index subgroup co-induction has infinite maximal pattern entropy}. 
	\begin{lemma}\label{the key to proving my theorem involving infinite index subgroup actions}
		Let $H\leq G$, $s: H\backslash G \to G$ a fixed section of $H\backslash G$, $K\in \mathscr{F}(H\backslash G),$ and $F\in \mathscr{F}(G).$ If $[G:H] = +\infty$ then 
		\begin{align*}
			G\backslash\left(\bigcup_{g\in F}\bigcup_{\theta\in K}\bigcup_{\phi\in K}s(\theta)^{-1}Hs(\phi)g\right) \neq \emptyset
		\end{align*}
		and contains infinitely many elements. 
	\end{lemma}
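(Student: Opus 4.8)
The plan is to recognize each of the finitely many sets $s(\theta)^{-1}Hs(\phi)g$ comprising the removed union as a right coset of a conjugate of $H$, and then to invoke Neumann's lemma (Proposition~\ref{BH Neumman Result}) to conclude that such a finite union cannot exhaust $G$.

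First I would perform the algebraic rewriting. For fixed $\theta \in K$, set $C_\theta = s(\theta)^{-1}Hs(\theta)$, a conjugate subgroup of $H$. Since
\[
s(\theta)^{-1}Hs(\phi)g = \left(s(\theta)^{-1}Hs(\theta)\right)s(\theta)^{-1}s(\phi)g = C_\theta\, s(\theta)^{-1}s(\phi)g,
\]
each term in the triple union is a right coset of $C_\theta$. Because conjugation by $s(\theta)$ is an automorphism of $G$, we have $[G:C_\theta] = [G:H] = +\infty$ for every $\theta \in K$. Thus the set removed from $G$ is a finite union of right cosets of the finitely many infinite-index subgroups $\{C_\theta : \theta \in K\}$.

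For the non-emptiness of the complement, I would argue by contradiction: if the union equalled $G$, then $G$ would be a finite union of cosets of the subgroups $C_\theta$, each of infinite index, and Proposition~\ref{BH Neumman Result} would force at least one $C_\theta$ to have finite index, a contradiction. To upgrade this to infinitely many elements, I would suppose instead that the complement were finite, say $\{h_1,\dots,h_m\}$. Each singleton is the right coset $\{e\}h_i$ of the trivial subgroup, and $[G:\{e\}] = +\infty$ since $G$ is infinite. Then $G$ would be a finite union of cosets of the subgroups $\{C_\theta : \theta\in K\}\cup\{\{e\}\}$, all of infinite index, again contradicting Proposition~\ref{BH Neumman Result}. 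Hence the complement is infinite.

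The genuine content is the coset rewriting of the first step, which turns a seemingly irregular union into a union of subgroup cosets so that Neumann's theorem applies; the only subtlety afterward is the trivial-subgroup trick needed to promote \emph{nonempty} to \emph{infinite}, and I expect no serious obstacle there.
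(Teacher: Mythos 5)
Your proposal is correct and follows essentially the same route as the paper: the same rewriting $s(\theta)^{-1}Hs(\phi)g = \left[s(\theta)^{-1}Hs(\theta)\right]s(\theta)^{-1}s(\phi)g$ exhibiting the union as finitely many right cosets of the infinite-index conjugates $s(\theta)^{-1}Hs(\theta)$, followed by the same two applications of Proposition \ref{BH Neumman Result}. Your only divergence is cosmetic: you make explicit the trivial-subgroup trick (viewing each leftover element as a coset of $\{e\}$, of infinite index since $G$ is infinite) that the paper's infinitude argument leaves implicit.
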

	\begin{proof}
		We re-write the union as 
		\begin{align*}
			\bigcup_{g\in F}\bigcup_{\theta\in K}\bigcup_{\phi\in K}\left[s(\theta)^{-1}Hs(\theta)\right]s(\theta)^{-1}s(\phi)g. 
		\end{align*}
		For each $\theta\in K$ the subgroup $s(\theta)^{-1}Hs(\theta)$ has infinite index in $G$ because $H$ is an infinite index subgroup of $G$. Proposition \ref{BH Neumman Result} tells us if the above union equals $G$ then one of the subgroups must be of finite index. This would lead to a contradiction; therefore, 
		\begin{align*}
			G\backslash\left(\bigcup_{g\in F}\bigcup_{\theta\in K}\bigcup_{\phi\in K}s(\theta)^{-1}Hs(\phi)g\right) \neq \emptyset.
		\end{align*}
		Moreover, it
		must contain infinitely many elements of $G$. If not, we could cover $G$ with finitely many right cosets of subgroups with infinite index, which by Proposition \ref{BH Neumman Result} is impossible.  		
	\end{proof}
	\begin{theorem}\label{infinite index subgroup co-induction has infinite maximal pattern entropy}
		Let $(X,H,\alpha)$ be a t.d.s. where $X$ contains at least two distinct points and $(X_{H}^{G},G,\alpha^{H\backslash G})$ be the co-induced system. If $[G:H] = +\infty$, then $$IN_{k}\left(X_{H}^{G},G\right) = \left(X_{H}^{G}\right)^{k}$$ for all $k\in \mathbb{N}$ and $h_{top}^{*}(X_{H}^{G},G) = +\infty.$ 
	\end{theorem}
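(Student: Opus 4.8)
The plan is to prove the stronger combinatorial statement $IN_k(X_H^G,G) = (X_H^G)^k$ directly from the definition of an $IN$-tuple, and then read off the entropy from Theorem \ref{local maximal pattern entropy}. Fix an arbitrary tuple $(f_1,\dots,f_k) \in (X_H^G)^k$ together with a product neighborhood $\mathcal{U}_1 \times \cdots \times \mathcal{U}_k$. Since $X_H^G = \prod_{\theta \in H\backslash G} X$ carries the product topology, I would first shrink each $\mathcal{U}_i$ to a basic open set and gather all the coordinates they constrain into a single $K \in \mathscr{F}(H\backslash G)$, so that $\mathcal{U}_i = \{f : f(\theta) \in V_{i,\theta} \text{ for all } \theta \in K\}$ for suitable open $V_{i,\theta} \ni f_i(\theta)$. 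Using the explicit action $\alpha_g^{H\backslash G}(f)(\theta) = \alpha_{s(\theta) g s(\theta g)^{-1}} f(\theta g)$, the membership $f \in \alpha_{g^{-1}}^{H\backslash G}(\mathcal{U}_{\omega(g)})$ unwinds to the requirement that for every $\theta \in K$ the value $f(\theta g)$ lie in the non-empty open set $\alpha_{s(\theta g) g^{-1} s(\theta)^{-1}}(V_{\omega(g),\theta})$. Hence each element $g$ of a prospective independence set, paired with a label $\omega(g)$, constrains only the coordinates in $Kg = \{\theta g : \theta \in K\}$.

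The heart of the argument is to produce arbitrarily large finite $I \subseteq G$ for which the blocks $\{Kg : g \in I\}$ are pairwise disjoint; within a single $g$ the map $\theta \mapsto \theta g$ is already injective on $K$. A collision $\theta g = \phi g'$ (with $\theta,\phi \in K$ and $g \neq g'$ in $I$) is equivalent to $g(g')^{-1} \in s(\theta)^{-1} H s(\phi)$, that is, $g \in s(\theta)^{-1} H s(\phi) g'$. I would build $I$ greedily: having chosen $\{g_1,\dots,g_m\}$, Lemma \ref{the key to proving my theorem involving infinite index subgroup actions}, applied with $F = \{g_1,\dots,g_m\}$ and the set $K$, guarantees that the complement of $\bigcup_{j}\bigcup_{\theta,\phi \in K} s(\theta)^{-1} H s(\phi) g_j$ is infinite, so a new $g_{m+1}$ may be selected avoiding every forbidden coset. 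Because $\theta,\phi$ range over all of $K$, this single union simultaneously forbids $\theta g_{m+1} = \phi g_j$ and the reversed collision in both orders. Iterating produces $I$ of any desired size with all coordinate blocks mutually disjoint.

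Given such an $I$, I claim it is an independence set for $(\mathcal{U}_1,\dots,\mathcal{U}_k)$. For any non-empty finite $F \subseteq I$ and any $\omega \colon F \to [k]$, the coordinates $\{\theta g : \theta \in K,\, g \in F\}$ are pairwise distinct by construction, so the conditions ``$f(\theta g) \in \alpha_{s(\theta g) g^{-1} s(\theta)^{-1}}(V_{\omega(g),\theta})$'' live on mutually disjoint coordinates and can be met independently by choosing one point of each non-empty open target; the remaining coordinates of $f$ are assigned arbitrarily, which is legitimate since in the product model $X_H^G$ every coordinate is free. This yields a point of $\bigcap_{g \in F} \alpha_{g^{-1}}^{H\backslash G}(\mathcal{U}_{\omega(g)})$, so the intersection is non-empty and $I$ is an independence set. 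Since $I$ can be taken arbitrarily large, $(f_1,\dots,f_k)$ is an $IN$-tuple and $IN_k(X_H^G,G) = (X_H^G)^k$.

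The entropy statement then follows from Theorem \ref{local maximal pattern entropy}. Because $X$ has at least two points and $[G:H] = +\infty$, the index set $H\backslash G$ is infinite and $X_H^G = \prod_{H\backslash G} X$ is an infinite space; hence for each $k$ it contains $k$ distinct points, and any such tuple lies in $IN_k(X_H^G,G) = (X_H^G)^k$, so $IN_k^e(X_H^G,G) \neq \emptyset$ for every $k$. Therefore $\sup\{k : IN_k^e(X_H^G,G) \neq \emptyset\} = +\infty$ and $h_{top}^*(X_H^G,G) = +\infty$. I expect the main obstacle to be the bookkeeping in the second step: translating the coordinate-collision condition $\theta g = \phi g'$ into exactly the cosets $s(\theta)^{-1} H s(\phi) g_j$ appearing in Lemma \ref{the key to proving my theorem involving infinite index subgroup actions}, and confirming that ranging over all ordered pairs $(\theta,\phi) \in K \times K$ eliminates collisions in both directions at once.
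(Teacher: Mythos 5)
Your proposal is correct and follows essentially the same route as the paper: reduce the product neighborhood to constraints on finitely many coordinates $K \in \mathscr{F}(H\backslash G)$, greedily build $g_1, g_2, \dots$ via Lemma \ref{the key to proving my theorem involving infinite index subgroup actions} so that the coset blocks $Kg_p$ are pairwise disjoint, and observe that constraints on disjoint coordinates of $\prod_{H\backslash G} X$ are simultaneously satisfiable, then invoke Theorem \ref{local maximal pattern entropy}. Your explicit translation of the collision $\theta g = \phi g'$ into $g \in s(\theta)^{-1} H s(\phi) g'$, with ordered pairs $(\theta,\phi) \in K \times K$ covering both directions, is exactly the bookkeeping underlying the paper's forbidden union $\bigcup_{p}\bigcup_{i,j} a_i^{-1} H a_j g_p$.
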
 
	\begin{proof}
		Let $k\in \mathbb{N}$, $(f_1,\dots,f_k)\in \left(X_{H}^{G}\right)^{k}$ and $U_1\times \ldots \times U_k$ be a product neighborhood of $(f_1,\dots,f_k)$. We can assume without loss of generality there exists $K\in \mathscr{F}(H\backslash G)$ such that $K=\{Ha_1,\dots,Ha_r\}$ and 
		\begin{align*}
			U_i = \bigcap_{j=1}^{r}\pi_{Ha_j}^{-1}(V_{j,i}),
		\end{align*}
		where $V_{j,i}$ are non-empty, open subsets of $X$. We will assume that $s(Ha_i) = a_i$ for $1\leq i\leq r.$ Let $g_1 = e$ and $F_1 = \{e\}$, then $F_1$ is clearly an independence set for $(U_1,\dots,U_k).$ By Lemma \ref{the key to proving my theorem involving infinite index subgroup actions}
		\begin{align*}
			G\backslash \left(\bigcup_{i=1}^{r}\bigcup_{j=1}^{r}a_i^{-1}Ha_jg_1\right) \neq \emptyset,
		\end{align*}
		so we choose $g_2$ as an element of this set. This gives us that $Ha_ig_1 \neq Ha_jg_2$ for any $1\leq i,j\leq r.$ Let $F_2 = \{g_1,g_2\}$ and $\sigma\in [k]^{\{g_1,g_2\}},$ then
		\begin{align*}
			\bigcap_{p=1}^{2}\alpha^{H\backslash G}_{g_p^{-1}}\left(U_{\sigma(g_p)} \right) &= \bigcap_{p=1}^{2}\bigcap_{j=1}^{r}\alpha^{H\backslash G}_{g_{p}^{-1}}\pi_{Ha_j}^{-1}\left(V_{j,\sigma(g_p)}\right)\\
			&= \bigcap_{p=1}^{2}\bigcap_{j=1}^{r}\pi_{Ha_jg_p}^{-1}\left(\alpha_{s(Ha_jg_p)g_{p}^{-1}a_j^{-1}}V_{j,\sigma(g_p)}\right)\\
			&\neq \emptyset.
		\end{align*} 
		Therefore, $F_2$ is an independence set for $(U_1,\dots,U_k).$ We continue this construction recursively. At each step we use Lemma \ref{the key to proving my theorem involving infinite index subgroup actions} to choose \begin{align*}
			g_n\notin \bigcup_{p=1}^{n-1}\bigcup_{i=1}^{r}\bigcup_{j=1}^{r}a_i^{-1}Ha_jg_p
		\end{align*}so $Ha_ig_q\neq Ha_jg_p$ for $1\leq i,j\leq r$ and $1\leq q,p\leq n$ such that $p\neq q.$ Let $F_n = \{g_1,\dots, g_n\}$ and $\sigma\in [k]^{\{g_1,\dots,g_n\}},$ then
		\begin{align*}
			\bigcap_{p=1}^{n}\alpha^{H\backslash G}_{g_p^{-1}}\left(U_{\sigma(g_p)} \right) &= \bigcap_{p=1}^{n}\bigcap_{j=1}^{r}\alpha^{H\backslash G}_{g_{p}^{-1}}\pi_{Ha_j}^{-1}\left(V_{j,\sigma(g_p)}\right)\\
			&= \bigcap_{p=1}^{n}\bigcap_{j=1}^{r}\pi_{Ha_jg_p}^{-1}\left(\alpha_{s(Ha_jg_p)g_{p}^{-1}a_j^{-1}}V_{j,\sigma(g_p)}\right)\\
			&\neq \emptyset.
		\end{align*} 
		Therefore, $F_n$ is an independence set for $(U_1,\dots, U_k)$. We can continue this argument to make arbitrarily large, finite independence sets for $(U_1,\dots,U_k)$. We apply this argument to an arbitrary product neighborhood of $(f_1,\dots,f_k);$ therefore, $(f_1,\dots,f_k)\in IN_{k}\left(X_{H}^{G},G\right)$ and $IN_{k}\left(X_{H}^{G},G\right) = \left(X_{H}^{G}\right)^{k}.$ We have that $IN_{k}^{e}\left(X_{H}^{G},G\right) \neq \emptyset$ for all $k\in \mathbb{N};$ therefore, by Theorem \ref{local maximal pattern entropy} it follows that $h_{top}^{*}\left(X_{H}^{G},G\right) = +\infty.$
	\end{proof}
	\begin{remark}
		\normalfont In the proof of Theorem \ref{infinite index subgroup co-induction has infinite maximal pattern entropy}, we recursively construct a sequence $\{g_p\}_{p\in \mathbb{N}}$ such that for all $n\in \mathbb{N}$ the collection $\{g_1,\dots,g_n\}$ is an independence set for $(U_1,\dots,U_k).$ This gives us that $\{g_p\}_{p\in \mathbb{N}}$ is an infinite independence set for $(U_1,\dots,U_k)$ and $(f_1,\dots,f_k)$ is an $IT$-tuple of length $k$ (see Section 6 of \cite{Kerr_Li_Independence}). Therefore, for $k\in \mathbb{N}$ we have that every tuple of length $k$ is an $IT$-tuple.  
	\end{remark}
	\begin{remark}
		\normalfont When $G$ is amenable and $H \leq G,$ the topological entropy $h_{top}(X_{H}^{G},G) = h_{top}(X,H)$ by Theorem 4.2 in \cite{Dooley_Zhang_co_induction}. Using Theorem \ref{infinite index subgroup co-induction has infinite maximal pattern entropy}, we can construct co-induced systems such that $h_{top}(X_{H}^{G},G) = 0$ and $h_{top}^{*}(X_{H}^{G},G) = +\infty.$ For instance, consider the example we discussed following the proof of Theorem \ref{null iff null in finite index case}. We let $X = \{0,1\}$, $H = \mathbb{Z},$ and $\alpha$ be the trivial action of $\mathbb{Z}$ on $X.$ The group $\mathbb{Z}\times \mathbb{Z}$ is an amenable group, $h_{top}^{*}\left(\{0,1\}_{\mathbb{Z}}^{\mathbb{Z}\times\mathbb{Z}},\mathbb{Z}\times\mathbb{Z}\right) = +\infty,$ and $h_{top}\left(\{0,1\}_{\mathbb{Z}}^{\mathbb{Z}\times\mathbb{Z}},\mathbb{Z}\times\mathbb{Z}\right) = 0.$
	\end{remark}
	\section*{Acknowledgments}		
	I would like to thank my advisor, Dr. Hanfeng Li, for his guidance and insightful comments during our weekly meetings. His knowledge and wisdom have been invaluable resources throughout this project. 
	\bibliographystyle{plain}
	\bibliography{references.bib}
	\nocite{Kerr_Li_Ergodic_Theory_Book}
	\nocite{Kushnirenko_measure_sequence_entropy}
	
	\noindent\small{Department of Mathematics, SUNY at Buffalo, Buffalo, NY 14260-2900, USA}\\
	\small{\textit{Email address:} \href{mailto:dl42@buffalo.edu}{dl42@buffalo.edu}
	\end{document}	
	
	\typeout{get arXiv to do 4 passes: Label(s) may have changed. Rerun}